\documentclass[letterpaper,10pt]{article}

\usepackage[square,numbers]{natbib}

\usepackage[utf8]{inputenc}   
\usepackage[T1]{fontenc}  

\usepackage{pgf,tikz}
\usetikzlibrary{arrows}
\usepackage{multicol}
\usepackage{amsthm}
\usepackage{amsmath}
\usepackage{amssymb}
\usepackage{amsfonts}
\usepackage{stmaryrd}
\usepackage{mathabx}
\usepackage{latexsym}
\usepackage{color}
\usepackage{pdflscape}
\usepackage{graphics,graphicx,graphpap}
\usepackage{multirow}
\usepackage{rotating}
\usepackage[new]{old-arrows}
\usepackage[all]{xy}
\usepackage[letterpaper]{geometry}
\usepackage{subfigure}
\usepackage{hyperref}
\usepackage{orcidlink}
\usepackage{authblk}

\theoremstyle{plain}

\usepackage{tocbibind}

\DeclareMathAlphabet{\mathpzc}{OT1}{pzc}{m}{it}
\newtheorem{theorem}{Theorem}
\newtheorem{prop}[theorem]{Proposition}
\newtheorem{cor}[theorem]{Corollary}
\newtheorem{lem}[theorem]{Lemma}
\newtheorem{conj}[theorem]{Conjecture}
\newtheorem{que}[theorem]{Question}
\theoremstyle{definition}
\newtheorem{exa}[theorem]{Example}

\newcommand{\hocolim}{\operatornamewithlimits{\mathrm{hocolim}}}
\newcommand{\colim}{\operatornamewithlimits{\mathrm{colim}}}
\newcommand*{\email}[1]{%
    \normalsize\href{mailto:#1}{#1}\par
    }

\title{Total cut complexes and their duals}
\author{Andr\'es Carnero Bravo\;\orcidlink{0009-0001-0221-5908}}
\affil{Centro de Ciencias Matemáticas, UNAM,\\ A.P. 61-3, Xangari, Morelia, Michoacán 58089, México\\ \email{carnero@matmor.unam.mx}}
\date{}
\begin{document}
\maketitle
\begin{abstract}
We study the total $d$-cut complexes and their Alexander duals. We give some 
results about the connectivity in general and in terms of the grith of the graph. For $d\geq3$, the homotopy type of these complexes is calculated for: $p$th power of a cycle with at least 
$(2r)d$ vertices where $p\leq r$; the $r$th power of a cycle with at least $2rd-(r-1)$ vertices where $r\geq3$; and the $2$th power of  a cycle with at least $3d$ vertices. 
These calculations solve a conjecture of 
Bayer, Denker, Milutinović, Rowlands, Sundaram and Xue. The homotopy type of the $2$-total cut complex for any $r$th power of a cycle with $r\geq3$ also is calculated, 
solving a conjecture of Chauhan, Shukla and Vinayak. We also study the complexes of cartesian products of paths and of cartesian products of complete 
graphs for the total $2$-cut complex.
\end{abstract}
\textbf{\textit{Keywords:}} graph complexes, homotopy type\\
\textbf{\textit{Mathematics Subject Classification:}} 05E45, 05C76, 55P10, 55P15
\tableofcontents
\textbf{Acknowledgments.} The author wishes to thank Sheila Sundaram for pointing out that the homotopy type of total cut complexes of complete multipartite graphs was already calculated in \citep{cutcomplexesi}. This work was supported by UNAM Posdoctoral Program (POSDOC) and by the project PAPIIT IN101423.
\section{Introduction}
Since Lovasz proof of Kneser's Conjecture \citep{lovasz}, graph complexes have become one of the main objects of study in topological 
combinatorics (see for example \citep{delongueville,haxelltopcinind,jonsson,kozlovcombalgtop}). 
Here we study the homotopy type of two filtrations of graph complexes. These filtrations are related through the Alexander duality. 
The complexes of the filtrtions are the ones define in \citep{kimboundind}--which we will call bounded independence complexes-- and the total cut complexes defined in 
\citep{totcutcompl}.
The first family of complexes was defined to study the problem of finding colorful independent transversals for families of independent sets of a graph. These complexes have also been called robust clique complexes in \cite{filak} and in \citep{shella} 
similar complexes were define and were called clique-free complexes --these complexes are the bounded independence complexes of the complement graph.
Inspired by a result of Fröberg \citep{froberg} and the extension in \citep{eagonreiner}, the total $k$-cut complex of a graph was defined and lately these complexes have receive a lot of attention 
(see for example \citep{chandrakartoptotcutgrid,chauhantot2comcycl,shenhmtpcyclepwr}).
The relation between this two families has been studied in \citep{filak,froberg24,zaare2026vertex}.

Here we solve the following conjectures.
\begin{conj}\citep{totcutcompl}
Let $n\geq3d$ and $d\geq3$, then 
$$\Delta_d^t(C_n^2)\simeq\left\lbrace\begin{array}{cc}
\mathbb{S}^{2i+1}     & \mbox{ if }n=3d+i\mbox{ and } 0\leq i\leq d-2 \\
\mathbb{S}^{2d+i} & \mbox{ if }n=4d+i\mbox{ and } i\geq-1
\end{array}\right.$$
\end{conj}

\begin{conj}\citep{shenhmtpcyclepwr}
For $d\geq3$ and $n\geq(2d-1)r+1$
$$\Delta_d^t(C_n^r)\simeq\mathbb{S}^{n-2d}$$
\end{conj}
We prove these conjectures in Theorem \ref{thmtotcyclpwrs}. 
%More precisely, we show that $\Delta_d^t(C_n^r)\simeq\mathbb{S}^{n-2d}$ for $r\geq2$ and $n\geq2rd$ for all $d\geq2$. 
For $d=2$, in \citep{shenhmtpcyclepwr} was conjectured that $\Delta_2^t(C_n^r)\simeq\mathbb{S}^{n-4}$ for $n\geq3r+1$ and $r\geq3$. This conjecture was solve in \citep{chauhantot2comcycl} and there it also was 
shown that $\Delta_2^t(C_{2r+2}^r)\simeq\mathbb{S}^{r-1}$. In \citep{chauhantot2comcycl} a conjecture was made about the remaining cases. 
We calculate the homotopy type of $\Delta_2^t(C_n^r)$ for all $r\geq3$ and $n\geq2r+2$, solving 
Conjecture 4.1 from \citep{chauhantot2comcycl}. For all this we first determine the homotopy type of the bounded independence complex of powers of cycles, and for this we need the homotopy type of the bounded 
independence complex of connected chordal graphs and of the disjoint union of chordal graphs.

While it is known that the homotopy type of a simplicial complex $K$ does not determine the homotopy typee of its Alexander Dual $K^*$ (see for example \citep{minianalxdual}), 
if both complexes are simply connected and one of 
the complexes has the homotopy type of a wedge of spheres of the same dimension, then the other has the homotopy type of a wedge of spheres of the corresponding dimension. This follows from Alexander Duality 
(see Theorem \ref{dualidadalexander}) and a folklore result in algebraic topology (see Theorem \ref{homttyphomlgrp}). From this fact, our approach is to use the homotopy type of the complexes of one of the 
filtrations to determine the homotopy type of the complexes of the other filtration. With this approach in mind, we give some general results in the connectivity of the 
complexes of both filtrations, we think these results can be helpful in the study of both filtrations.

In the first section the necessary preliminaries are given. In the second section we define the filtrations and give some general tools for studying the complexes, in this section we show 
that chordal graphs have contractible bounded independence complexes. In third section is focused on the connectivity 
of the complexes, in particular we show that the girth of a graph can be used to give a lower bound in the connectivity of the total cut complex. In the last section we calculate the homotopy 
type of the complexes for cycle powers, for complete multipartite graphs and some disconnected graphs. In particular we show that the bounded independence complex of disjoint graphs has the homotopy type of a 
homotopy colimit over a particular poset, using this we give an example of $d$-bounded independence and total $d$-cut complexes with torsion for $d\geq3$.
For the total $2$-cut complex the homotopy type for cartesian products of paths and for cartesian 
products of complete graphs is determined.

\section{Preliminaries}
For a positive integer $k$ we take the set $\underline{k}=\{1,\dots,k\}$. 
For a finite set $X$ we take $\binom{X}{r}$ as the set of all the subsets of $X$ with cardinality $r$ and 
$\mathcal{P}(X)$ will denote the power set of $X$. 

\subsection{Graph theory}
We only consider simple graphs \textit{i.e.} no loops or multiedges are allowed. We will denote the set of vertices by $V(G)$ and $|V(G)|$ will denote the \textit{order} of the graph. The edge set will be denoted by 
$E(G)$. For a vertex $v$, its \textit{open neighborhood} is the set $N_G(v)=\{u\in V(G):\;uv\in E(G)\}$ and its \textit{closed neighborhood} is the set $N_G[v]=N_G(v)\cup\{v\}$. We say two vertices $u,v$ are adjacent 
if $uv$ is an edge.
Given $S$ a subset of $V(G)$ the \textit{induced subgraph} $G[S]$ is the graph with vertex set $S$ and edge set
$$E(G[S])=\{uv:\;u,v\in S\mbox{ and }uv\in E(G)\}.$$
For a vertex set $S$, we define $G-S$ as $G[V(G)-S]$. If $S=\{u\}$, the notation $G-u$ is used instead of $G-\{u\}$.

A vertex set $S\subseteq V(G)$ is \textit{independent} if no two vertices in $S$ are adjacent. The \textit{independence number} is the maximal cardinality possible for a independent set and it will be denoted by $\alpha(G)$.

For a positive integer $n$, we define the \textit{path} on $n$ vertices as the graph $P_n$ with vertex set $\underline{n}$ and edge set 
$E(P_n)=\{ij:\;|i-j|=1\}$. We define the \textit{cycle} on $n\geq3$ vertices as the graph $C_n$ with vertex set $\underline{n}$ and edge set 
$E(C_n)=\{ij:\;|i-j|=1\}\cup\{1n\}$. Lastly we define the \textit{complete graph} on $n$ vertices as the graph $K_n$ with vertex set $\underline{n}$ and edge set $\{ij:\; i\neq j\}$.

A $uv$-\textit{path} of a graph $G$ between two vertices $u,v$ is a sequence of vertices $w_1w_2\cdots w_l$ such that any two are distinct, $w_iw_{i+1}$ is an edge for $1\leq i\leq l-1$, 
$w_1=u$ and $w_l=v$. The \textit{length} of a path $P$ is the number of vertices minus one and we will denoted by $l(G)$. The \textit{distance} between two vertices is 
$$d_G(u,v)=\min\{l(P):P\mbox{ is a }uv-\mbox{path}\}.$$
A \textit{cycle} of  $G$ is a sequence of vertices $w_1\cdots w_l$ such that $w_iw_{i+1}$ is an edge for $1\leq i\leq l-1$, $w_1w_l$ is an edge, $w_1=w_l$ and any other two vertices are distinct. 
The \textit{length of a cycle} is equal to the number of vertices in the cycle. 
The \textit{girth} of a graph $G$ is equal to the minimal possible length of a cycle of $G$. If the graph does not have cycles we say the girth is $\infty$. 
We will denote the girth of a graph by $g(G)$.

We say a graph $G$ is \textit{chordal} if $G$ does not have induced cycles of length equal or greater to $4$. A vertex $v$ is called a \textit{simplicial vertex} if $G[N_G(v)]$ is a complete graph.
\begin{theorem}\citep{repfntgrpstintRl}\label{teosimplvrtx}
If $G$ is chordal, then $G$ has a simplicial vertex.
\end{theorem}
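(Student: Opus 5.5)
The plan is to prove Dirac's stronger statement by induction on $|V(G)|$: \emph{every chordal graph is either complete or has two non-adjacent simplicial vertices}. The stronger form is needed because the induction glues two pieces along a clique, and we must be able to pick a simplicial vertex away from that clique. Graphs with at most one vertex, and complete graphs (where every vertex is simplicial), are immediate. Throughout we use that induced subgraphs of chordal graphs are chordal, since an induced cycle of $G[S]$ is an induced cycle of $G$.

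Now let $G$ be chordal and not complete, with non-adjacent vertices $a,b$. If $G$ is disconnected, apply the induction to two different components: each component either is complete (any vertex is simplicial) or contains a simplicial vertex, and vertices in different components are non-adjacent. So assume $G$ is connected. Take a minimal $ab$-separator $S$, i.e.\ a set $S\subseteq V(G)-\{a,b\}$ such that $a$ and $b$ lie in different components $A$ and $B$ of $G-S$, and no proper subset of $S$ has this property. Minimality gives that every $s\in S$ has a neighbor in $A$ and a neighbor in $B$.

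The key lemma is that $S$ is a clique. Suppose $x,y\in S$ are non-adjacent. Choose a shortest $xy$-path with all interior vertices in $A$, and another with all interior vertices in $B$; both exist because $x$ and $y$ have neighbors in the connected sets $A$ and $B$. Their union is a cycle of length at least $4$. It has no chords: $xy\notin E(G)$, there are no edges between $A$ and $B$, and the paths are shortest, so they have no internal chords. This contradicts chordality. I expect this to be the main obstacle, in particular setting up the minimal separator and the neighbor property correctly.

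Next consider $G_A=G[A\cup S]$ and $G_B=G[B\cup S]$. Both are chordal and have fewer vertices than $G$, since $b\notin G_A$ and $a\notin G_A$. By induction, $G_A$ is either complete, in which case any vertex of $A$ is simplicial in $G_A$, or it has two non-adjacent simplicial vertices. In the latter case they cannot both lie in the clique $S$, so at least one lies in $A$. Either way we obtain a vertex $u\in A$ that is simplicial in $G_A$. Because $N_G(u)\subseteq A\cup S$, we have $N_G(u)=N_{G_A}(u)$, so $u$ is simplicial in $G$. The same argument gives $w\in B$ simplicial in $G$, and $u,w$ are non-adjacent. This completes the induction and proves Theorem \ref{teosimplvrtx}.
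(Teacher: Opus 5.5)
Your proof is correct and complete. Note that the paper does not prove this statement: it quotes the result from Lekkerkerker and Boland \citep{repfntgrpstintRl} and uses it as a black box, so there is no in-paper argument to compare with.

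What you give is Dirac's classical minimal-separator proof. You correctly identify why the strengthening to ``complete or two non-adjacent simplicial vertices'' is forced by this route. A simplicial vertex of $G[A\cup S]$ lying in $S$ could have neighbours in $B$, so you need one in $A$. Non-adjacency, together with $S$ being a clique, supplies it.

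This gives more than the paper ever uses. In the proof of Proposition \ref{propchordlbid} only the weak form is needed, together with two easy facts about a simplicial vertex $v$ of a connected graph $G$ with at least two vertices. First, any neighbour $u$ of $v$ satisfies $N_G[v]\subseteq N_G[u]$, so Lemma \ref{lemneghbhd}(1) applies. Second, $G-v$ is again connected and chordal, since a path through $v$ can be shortcut across the clique $N_G(v)$.

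Three cosmetic points:
\begin{itemize}
\item ``$a\notin G_A$'' should read ``$a\notin G_B$''.
\item Your separate disconnected case is unnecessary. The separator argument never uses connectivity and works verbatim when $S=\emptyset$, where the clique claim is vacuous.
\item Before taking a minimal separator, you could note explicitly that one exists, e.g.\ $V(G)-\{a,b\}$, since $a$ and $b$ are non-adjacent.
\end{itemize}
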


Given a graph $G$ and a positive integer $r$, the  $r$\textit{th power} of $G$ is the graph $G^r$ with vertex set $V(G)$ and edge set 
$$E(G^r)=\{uv:\;d_G(u,v)\leq r\}.$$
Notice that for the powers of paths and cycles we have that
$$\alpha(P_n^r)=\left\lceil\frac{n}{r+1}\right\rceil,\;\;\;\alpha(C_n^r)=\left\lfloor\frac{n}{r+1}\right\rfloor.$$

Given two graphs $G$ and $H$, the \textit{cartesian product} $G\oblong H$ is the graph with vertex set $V(G\oblong H)=V(G)\times V(H)$ 
and with edge set 
$$E(G\oblong H)=\{\{(u_1,v),(u_2,v)\}:\;\{u_1,u_2\}\in E(G)\}\cup\{\{(u,v_1),(u,v_2)\}:\;\{v_1,v_2\}\in E(H)\}.$$
For all the graph definitions not stated here we follow \citep{graphsanddigraphs}.

\subsection{Algebraic topology}
All the spaces are compactly generated. All the homology and cohomology groups are with integer coefficients. Now we wive some results and definitions that we will need. 

\begin{theorem}[Whitehead's theorem, {\citep[see Corollary 4.33]{hatcher}}]\label{whiteheadhomologia}
If $X$ and $Y$ are simply connected CW-complexes and there is a continuous map $f\colon X\longrightarrow Y$ such 
that $f_*:H_n(X)\longrightarrow H_n(Y)$ is an isomorphism for each $n$, then $f$ is an homotopy equivalence.
\end{theorem}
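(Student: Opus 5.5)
The statement is the homology version of Whitehead's theorem, and the plan is to reduce it to the homotopy-group version: a weak homotopy equivalence between CW-complexes is a homotopy equivalence. It therefore suffices to show that $f_*\colon\pi_n(X)\to\pi_n(Y)$ is an isomorphism for every $n$. We may assume $X$ and $Y$ are path-connected, which is part of simple connectivity. By cellular approximation we may take $f$ to be cellular. Replacing $Y$ by the mapping cylinder $M_f$, which deformation retracts onto $Y$ and is a CW-complex containing $X$ as a subcomplex, reduces the problem to an inclusion $X\hookrightarrow M_f$. We must then show that the pair $(M_f,X)$ is $n$-connected for all $n$, i.e. that the relative homotopy groups $\pi_n(M_f,X)$ vanish.

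The relative homology groups $H_n(M_f,X)$ all vanish. This follows from the long exact sequence of the pair together with the five lemma, since $H_n(X)\to H_n(M_f)\cong H_n(Y)$ is an isomorphism in every degree.

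The key step is to pass from vanishing relative homology to vanishing relative homotopy, using the relative Hurewicz theorem. Suppose inductively that $\pi_i(M_f,X)=0$ for $i<n$, with $n\geq2$. Note that $\pi_1(M_f,X)=0$ holds because $X$ is connected and $\pi_1(X)\to\pi_1(M_f)$ is surjective, both groups being trivial. Since $X$ is simply connected, the action of $\pi_1(X)$ on $\pi_n(M_f,X)$ is trivial. The relative Hurewicz theorem then gives
\[
\pi_n(M_f,X)\cong H_n(M_f,X)=0.
\]
By induction, all relative homotopy groups vanish. The long exact homotopy sequence of the pair then shows that $\pi_n(X)\to\pi_n(M_f)$ is an isomorphism for every $n$.

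Whitehead's theorem for weak equivalences of CW-complexes now yields that the inclusion $X\hookrightarrow M_f$ is a homotopy equivalence. Composing with the retraction $M_f\to Y$ shows that $f$ is a homotopy equivalence.

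The main obstacle is the relative Hurewicz step. It is exactly there that simple connectivity of $X$ is essential: it guarantees that the $\pi_1$-action is trivial and that the base case of the induction holds. Since this statement is standard (Hatcher, Corollary 4.33), in the paper it suffices to cite it rather than reprove it.
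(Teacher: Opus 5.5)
Your proposal is correct and follows the same route as the proof behind the citation; the paper itself gives no argument beyond the reference to Hatcher, Corollary 4.33. That proof likewise replaces $Y$ by the mapping cylinder $M_f$, gets $H_n(M_f,X)=0$ from the long exact sequence, and uses $\pi_1(M_f,X)=0$ with the relative Hurewicz theorem for simply connected $X$ to kill every $\pi_n(M_f,X)$ before applying Whitehead's theorem, with exactness alone already giving $H_n(M_f,X)=0$ so the five lemma is unnecessary.
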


Notice that Whitehead's Theorem tell us that a simply connected CW-complex with trivial homology must be contractible. 
We will use the following folklore result (see \citep{homtphmlgrps} for a proof).
\begin{theorem}\label{homttyphomlgrp}
Let $X$ be a simply connected CW-complex such that the only non-zero reduced homology group is $\tilde{H}_d(X)\cong\mathbb{Z}^a$. Then 
$$X\simeq\bigvee_a\mathbb{S}^d$$
\end{theorem}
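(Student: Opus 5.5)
We build a map from a wedge of spheres into $X$ that realizes the homology, and then apply Theorem \ref{whiteheadhomologia}. Throughout we assume $d\geq 2$, since $X$ is simply connected; if $d\leq1$, simple connectivity together with $\tilde H_1(X)=0$ forces $a=0$ in the case $d=1$, and then $X$ is contractible, which is the empty wedge.

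\textbf{Step 1: the Hurewicz isomorphism.} Since $X$ is simply connected and $\tilde H_i(X)=0$ for all $i<d$, the Hurewicz theorem shows that $X$ is $(d-1)$-connected. It also gives an isomorphism
$$h\colon \pi_d(X)\longrightarrow H_d(X)\cong\mathbb{Z}^a .$$

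\textbf{Step 2: constructing the map.} Choose a basis $e_1,\dots,e_a$ of $H_d(X)$. For each $j$ take $\gamma_j=h^{-1}(e_j)\in\pi_d(X)$ and represent it by a map $f_j\colon\mathbb{S}^d\to X$. We may take these maps based; because $X$ is simply connected, the choice of basepoint paths does not matter. Let
$$f=\bigvee_j f_j\colon \bigvee_a\mathbb{S}^d\longrightarrow X .$$

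\textbf{Step 3: $f$ is a homology isomorphism.} In degree $d$, the group $H_d(\bigvee_a\mathbb{S}^d)\cong\mathbb{Z}^a$ has a basis given by the fundamental classes $[\mathbb{S}^d]_j$ of the wedge summands. By naturality of the Hurewicz map, $f_*$ sends $[\mathbb{S}^d]_j$ to $h(\gamma_j)=e_j$, so $f_*$ is an isomorphism in degree $d$. In every other degree $n\neq d$, both reduced homology groups vanish, so $f_*$ is trivially an isomorphism.

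\textbf{Step 4: conclusion.} The wedge $\bigvee_a\mathbb{S}^d$ is a simply connected CW-complex because $d\geq2$, and $X$ is simply connected by hypothesis. Theorem \ref{whiteheadhomologia} therefore applies to $f$, and $f$ is a homotopy equivalence, so $X\simeq\bigvee_a\mathbb{S}^d$.

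The main obstacle is Step 1: we must verify that the Hurewicz theorem applies. This needs simple connectivity together with the vanishing of $\tilde H_i(X)$ for $i<d$, which we have. Once that is in place, the remaining steps are formal.
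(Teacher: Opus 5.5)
Your proof is correct. It is the standard Hurewicz-plus-Whitehead argument: realize a basis of $\pi_d(X)\cong H_d(X)$ by a map $\bigvee_a\mathbb{S}^d\to X$, check that it is a homology isomorphism, and apply Theorem \ref{whiteheadhomologia}. The paper does not write this out and defers to a citation, but this is the folklore proof it relies on. One small point: for $d=0$ you should state explicitly that path-connectedness forces $a=0$, as you did for $d=1$.
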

A more general result in the same vein is the following.
\begin{theorem}\citep[Example 4C.2]{hatcher}\label{gradconse}
If $X$ is an CW-complex simply connected such that $\tilde{H}_q(X)\cong\mathbb{Z}^a$, $\tilde{H}_{q+1}(X)\cong\mathbb{Z}^b$ for some 
$q\geq2$ and the rest of the homology groups are trivial, then 
$$X\simeq\displaystyle\bigvee_a\mathbb{S}^q\vee\bigvee_b\mathbb{S}^{q+1}.$$
\end{theorem}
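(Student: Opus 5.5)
Since $X$ is simply connected with $\tilde{H}_q(X)\cong\mathbb{Z}^a$, $\tilde{H}_{q+1}(X)\cong\mathbb{Z}^b$ and all other reduced homology zero, the plan is to build a wedge $W=\bigvee_a\mathbb{S}^q\vee\bigvee_b\mathbb{S}^{q+1}$, construct a map $f\colon W\to X$ inducing isomorphisms on all homology groups, and then conclude by Whitehead's theorem (Theorem \ref{whiteheadhomologia}). This works because $W$ is simply connected ($q\geq2$) and $X$ is simply connected by hypothesis.

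\textbf{Step 1: the $q$-spheres.} Since $\tilde{H}_i(X)=0$ for $i<q$ and $X$ is simply connected, the Hurewicz theorem gives $\pi_q(X)\cong H_q(X)\cong\mathbb{Z}^a$. Choose maps $\mathbb{S}^q\to X$ representing a basis of this group and wedge them to get $f_1\colon\bigvee_a\mathbb{S}^q\to X$, which is an isomorphism on $H_q$.

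\textbf{Step 2: the $(q+1)$-spheres.} The classes in $H_{q+1}(X)$ need not be spherical, so they cannot be realized by maps from spheres directly. Instead take the mapping cone $C_{f_1}$, or, replacing $f_1$ by a cellular inclusion, the pair $(X,\bigvee_a\mathbb{S}^q)$. The long exact sequence shows $H_i(X,\bigvee_a\mathbb{S}^q)=0$ for $i\leq q$, and $H_{q+1}(X,\bigvee_a\mathbb{S}^q)\cong H_{q+1}(X)\cong\mathbb{Z}^b$, because the map $H_q(\bigvee_a\mathbb{S}^q)\to H_q(X)$ is injective. The pair is $1$-connected, as both spaces are simply connected. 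So the relative Hurewicz theorem gives $\pi_{q+1}(X,\bigvee_a\mathbb{S}^q)\cong\mathbb{Z}^b$. The obstacle is that lifting these relative classes to absolute maps $\mathbb{S}^{q+1}\to X$ requires the boundary map $\partial\colon\pi_{q+1}(X,\bigvee_a\mathbb{S}^q)\to\pi_q(\bigvee_a\mathbb{S}^q)$ to vanish. This is the step I expect to be the main difficulty.

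\textbf{Step 3: controlling the boundary map.} Since $\pi_q(\bigvee_a\mathbb{S}^q)\cong\mathbb{Z}^a\to\pi_q(X)$ is an isomorphism, exactness of the homotopy sequence of the pair forces $\partial=0$. Hence $\pi_{q+1}(X)\to\pi_{q+1}(X,\bigvee_a\mathbb{S}^q)$ is surjective. Lift a basis to maps $g_j\colon\mathbb{S}^{q+1}\to X$ and set $f=f_1\vee\bigvee_j g_j\colon W\to X$.

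\textbf{Step 4: checking homology.} On $H_q$ the map $f$ agrees with $f_1$, so it is an isomorphism. On $H_{q+1}$, compose with the isomorphism $H_{q+1}(X)\cong H_{q+1}(X,\bigvee_a\mathbb{S}^q)$. By naturality of the Hurewicz maps, the images of the $g_j$ are the Hurewicz images of the chosen relative basis, so they form a basis, and $f_*$ is an isomorphism. All other reduced homology groups vanish on both sides. Whitehead's theorem then gives that $f$ is a homotopy equivalence.
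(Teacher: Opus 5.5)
Your proof is correct, and it takes a different route from the source the paper relies on. The paper gives no argument of its own and only cites Hatcher, where the statement comes out of the minimal cell structures of Section 4.C (Proposition 4C.1). Since $H_q(X)$ and $H_{q+1}(X)$ are free, $X$ is homotopy equivalent to a CW complex with one $0$-cell, $a$ $q$-cells and $b$ $(q+1)$-cells whose cellular boundaries vanish. Each attaching map $\mathbb{S}^q\to\bigvee_a\mathbb{S}^q$ is therefore zero on $H_q$, hence nullhomotopic, because $\pi_q(\bigvee_a\mathbb{S}^q)\to H_q(\bigvee_a\mathbb{S}^q)$ is the Hurewicz isomorphism for $q\geq2$. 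The complex is then homotopy equivalent to the wedge.

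You instead build the equivalence in the opposite direction, $W\to X$, by realizing a basis of $H_{q+1}(X)$ by spheres. The same Hurewicz fact appears in your Step 3 as the injectivity of $\pi_q(\bigvee_a\mathbb{S}^q)\to\pi_q(X)$, which kills $\partial$. This is exactly where the adjacency of the two degrees is used: for $\mathbb{CP}^2$, with homology in degrees $2$ and $4$, the analogous boundary map hits the Hopf map and the top class is not spherical.

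Your route is self-contained, using only the absolute and relative Hurewicz theorems, the homotopy exact sequence and Theorem \ref{whiteheadhomologia}. The cell-structure route is a two-line argument once Proposition 4C.1 is available, and it yields an explicit CW model.

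One point is left implicit in Steps 1 and 2. There you pass from ``simply connected with vanishing (relative) homology up to degree $q$'' to $(q-1)$-connectivity of $X$ and $q$-connectivity of the pair. This is the standard degree-by-degree iteration of the (relative) Hurewicz theorem, and it is worth stating.
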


Given three spaces $X,Y,Z$, maps $f:X\longrightarrow Y$ and $g:X\longrightarrow Z$, the \textit{homotopy pushout} or \textit{double mapping cylinder} is the space 
$$\hocolim\left(Y \longleftarrow X\longrightarrow Z\right)=Y\sqcup X\times[0,1]\times Z/\sim$$
where $(x,0)\sim f(x)$ and $(x,1)\sim g(x)$. We will use the following folklore result.

\begin{lem}\label{homocolimpegado}
Let $X,Y,Z$ be spaces with maps $f:Z\longrightarrow X$ and $g:Z\longrightarrow Y$ such that both maps are null-homotopic. Then
$$\hocolim\left(\mathcal{S}\right)\simeq X\vee Y\vee \Sigma Z$$
where 
\begin{equation*}
    \xymatrix{
    \mathcal{S}: & Y \ar@{<-}[r]^{g} & Z \ar@{->}[r]^{f} & X
    }
\end{equation*}
\end{lem}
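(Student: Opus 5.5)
The strategy is to use homotopy invariance of the homotopy pushout to replace both null-homotopic maps by constant maps, and then to identify the resulting space directly.

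\emph{Step 1: homotopy invariance.} If $f\simeq f'$ and $g\simeq g'$, then
$$\hocolim(Y\xleftarrow{g}Z\xrightarrow{f}X)\simeq\hocolim(Y\xleftarrow{g'}Z\xrightarrow{f'}X).$$
To prove this, let $H\colon Z\times[0,1]\to X$ be a homotopy from $f$ to $f'$. Define a map from the first double mapping cylinder to the second which is the identity on $X$ and on $Y$. On the cylinder coordinate, squeeze $Z\times[0,1]$ into $Z\times[0,\tfrac12]$, and use $H$ on the collar $Z\times[\tfrac12,1]$ near the $X$ end; treat the $g$ side in the same way. The homotopy run backwards gives a map in the other direction. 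Checking that the two composites are homotopic to the identity is the standard collar-reparametrization argument, for which one may cite \citep{hatcher}.

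An alternative is to use that $X\cup_f M_f$-type constructions depend only on the homotopy class of the attaching map when the relevant pairs are cofibrations. This holds because mapping cylinders give cofibrations, together with the homotopy extension property.

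\emph{Step 2: the constant-map case.} Assume, working in the pointed setting, that $f'\equiv x_0$ and $g'\equiv y_0$. The double mapping cylinder is then $Y\sqcup Z\times[0,1]\sqcup X$ with $Z\times\{0\}$ collapsed to $y_0$ and $Z\times\{1\}$ collapsed to $x_0$. This is exactly $X$ and $Y$ joined by the unreduced suspension $SZ$, glued at its two cone points to $x_0$ and $y_0$.

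Now choose a basepoint $z_0\in Z$, which requires $Z\neq\emptyset$. The segment $\{z_0\}\times[0,1]$ is a contractible subcomplex, so collapsing it is a homotopy equivalence; this uses the CW/cofibration hypothesis, with everything a CW complex in our simplicial setting. After the collapse the space becomes $X\vee Y\vee (SZ/\{z_0\}\times I)$. Finally, $SZ/(\{z_0\}\times I)=\Sigma Z$ is the reduced suspension, which gives $X\vee Y\vee\Sigma Z$.

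\emph{Expected obstacle.} The delicate point is Step 1 together with the basepoint bookkeeping. The null-homotopies are unbased, so the constant values $x_0, y_0$ may lie in any path component; for the wedge to be well defined one wants $X$ and $Y$ path connected, or at least a choice of basepoints in the right components. One also needs the good-pair/cofibration hypotheses so that collapsing a contractible subcomplex is a homotopy equivalence. I would state these implicit assumptions (CW complexes, $X,Y$ connected, $Z$ nonempty) and then cite the standard gluing lemma for homotopy pushouts.
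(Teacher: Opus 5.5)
The paper gives no proof of this lemma; it is quoted as a folklore result. Your argument is the standard one and is correct. Step 1 is Proposition 0.18 of \citep{hatcher} applied to $Z\times[0,1]$ attached to $X\sqcup Y$ along $Z\times\{0,1\}$, and the collapse of the arc $\{z_0\}\times[0,1]$ in Step 2 is Proposition 0.17 there.

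The side conditions you flag (CW structures, wedge points in the components hit by the null-homotopies, $Z\neq\emptyset$) are exactly the ones implicit in the statement. The condition $Z\neq\emptyset$ is genuinely needed for a one-point wedge. For $Z=\emptyset$ the homotopy pushout is $X\sqcup Y$, which is not $X\vee Y\vee\mathbb{S}^0$ in general. This is harmless in the paper's applications, where either $Z\neq\emptyset$ or one of the two outer spaces is contractible.
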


For a couple of result we will need more general homotopy colimits but we will not define them (see \citep{dugger2008primer,cubicalhomotopy,zieglerhocolim} for the definition).
Given a poset $\mathcal{P}$, it can be taken as a category and for a functor $\mathcal{X}:\mathcal{P}\longrightarrow\mathrm{Top}$ we get the homotopy colimit
$\hocolim_\mathcal{P}\mathcal{X}$.
We are interested in diagrams where every map $\mathcal{X}(a<b)$ is an inclusion between CW-complexes and the poset $\mathcal{P}$ is finite, so $\mathcal{P}$ is a Reedy category and the diagram is cofibrant, thus
$\colim_\mathcal{P}\mathcal{X}\simeq\hocolim_\mathcal{P}\mathcal{X}$ \citep[see section 8.4.2]{cubicalhomotopy}.

Given a finite set $V$, a \textit{simplicial complex} $K$ on the vertex set $V$ is a subset of $\mathcal{P}(V)$ such that is $\sigma$ is in $K$ and $\tau$ is a subset of $\sigma$, then $\tau$ is in $\sigma$. 
A simplex $\sigma$ that is not contained in any other simplex is called \textit{facet}. The \textit{dimension} of a simplex $\sigma$ is $|\sigma|-1$.
If $K=\emptyset$, we say it is a \textit{void complex}. Notice that the void complex is different to 
the complex $\{\emptyset\}$. We will not distinguish between a simplicial complex and its geometric realization. Given a simplicial complex $K$, its \textit{$d$-skeleton} is the simplicial complex 
$$\mathrm{sk}_dK=\{\sigma\in K:\;|\sigma|=d+1\}.$$
For a finite set $X$ we take the simplicial complex $\Delta^X=\mathcal{P}(X)$. Taking $[n]=\underline{n}\cup\{0\}$, we take $\Delta^n=\Delta^{[n]}$.
The \textit{boundary} of $\Delta^n$ is the complex $\partial\Delta^n=\Delta^n-\{[n]\}\cong\mathbb{S}^{n-1}$.
We will use the following known facts:
\begin{itemize}
    \item If $\mathrm{sk}_dK=\mathrm{sk}_d\Delta^{V(K)}$, then $K$ is $(d-1)$-connected.
    \item For any $d\leq n$, we have that 
    $$\mathrm{sk}_d\Delta^n\simeq\bigvee_{\binom{n}{d+1}}\mathbb{S}^d.$$
\end{itemize}
Given two complexes $K,L$ we say $L$ is a \textit{subcomplex} of $K$ if $V(L)\subseteq V(K)$ and $L\subseteq K$.
For a simplex $\sigma$ in $K$, we define the following three subcomplexes:
\begin{itemize}
    \item The \textit{link} of $\sigma$ is the simplicial complex
    $$\mathrm{lk}(\sigma)=\{\tau\in K:\;\sigma\cap\tau=\emptyset\mbox{ and }\tau\cup\sigma\in K\}.$$
    \item The \textit{star} of $\sigma$ is the simplicial complex
    $$\mathrm{st}(\sigma)=\{\tau\in K:\;\tau\cup\sigma\in K\}.$$
    \item The \textit{deletion} of $\sigma$ is the simplicial complex
    $$\mathrm{del}(\sigma)=\{\tau\in K:\;\sigma\nsubseteq\tau\}.$$
\end{itemize}
For a vertex $v$ of $K$ we will use the notations $\mathrm{lk}(v)$, $\mathrm{st}(v)$ and $\mathrm{del}(v)$. Notice that we allow \textit{phantom vertices}, these are vertices that are in the vertex set but 
the singletons are not in the complex. For a phantom vertex the link and star are void while the deletion of the vertex is the whole complex.

Given two vertex disjoint complexes $K$ and $J$, their \textit{join} is the simplicial complex 
$$K*J=\{\tau\cup\sigma:\;\tau\in K \mbox{ and } \sigma\in J\}.$$

Given a complex $K$ on $n$ vertices, its \textit{Alexander Dual} is the complex 
$$K^*=\{\sigma\subseteq V(K)\colon\;V(K)-\sigma\notin K\}.$$
We will use the Alexander duals to study both filtrations, for this we need the following theorem.

\begin{theorem}\label{dualidadalexander}(see \citep{bjorneralexander})
Let $K$ be a simplicial complex with $n$ vertices, then
$$\tilde{H}_i(K)\cong\tilde{H}^{n-i-3}(K^*)$$
\end{theorem}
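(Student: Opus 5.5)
The plan is to prove the statement combinatorially, at the level of chain complexes, rather than through the classical Alexander duality in $\mathbb{S}^{n-2}\cong\partial\Delta^{V(K)}$. The idea is to identify the augmented simplicial cochain complex of $K^*$ with a shifted copy of the relative chain complex of the pair $(\Delta^{V},K)$, where $V=V(K)$ and $|V|=n$. Then the long exact sequence of the pair, together with the contractibility of $\Delta^V$, finishes the argument.

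\textbf{Step 1 (the complementation map).} Fix a total order on $V$. For $\sigma\subseteq V$ write $\bar\sigma=V-\sigma$. The augmented cochain group $\tilde C^{j}(K^*)$ has as basis the dual elements $\sigma^\vee$ for $\sigma\in K^*$ with $|\sigma|=j+1$; we include $j=-1$, i.e. the empty face. By definition of $K^*$, $\sigma\in K^*$ exactly when $\bar\sigma\notin K$. Hence complementation is a bijection between the faces of $K^*$ of size $j+1$ and the subsets of $V$ of size $n-j-1$ that are \emph{not} in $K$. These latter subsets form a basis of the relative chain group $\tilde C_{n-j-2}(\Delta^V,K)=\tilde C_{n-j-2}(\Delta^V)/\tilde C_{n-j-2}(K)$. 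Define
$$\Phi(\sigma^\vee)=\varepsilon(\sigma)\,\bar\sigma,$$
where $\varepsilon(\sigma)=(-1)^{\sum_{v\in\sigma}\mathrm{pos}(v)}$ is the sign of the shuffle permutation that puts $\sigma$ followed by $\bar\sigma$ into the order of $V$.

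\textbf{Step 2 (chain-map check).} I must show that $\Phi\circ\delta=\pm\,\partial\circ\Phi$. The coboundary $\delta\sigma^\vee$ is a signed sum of $(\sigma\cup v)^\vee$ over $v\notin\sigma$ with $\sigma\cup v\in K^*$. The boundary $\partial\bar\sigma$ is a signed sum of $\bar\sigma-v$ over $v\in\bar\sigma$, modulo those terms lying in $K$. These index sets match: $\sigma\cup v\in K^*$ iff $\bar\sigma-v\notin K$, which is exactly the condition that the term survives in the quotient by $\tilde C(K)$. It remains to check that the signs agree up to a global sign depending only on $j$. This sign bookkeeping is the main (though routine) obstacle. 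I would handle it by computing how $\varepsilon(\sigma\cup v)/\varepsilon(\sigma)$ compares with the incidence signs $[\sigma\cup v:\sigma]$ and $[\bar\sigma:\bar\sigma-v]$; both are determined by the position of $v$ relative to the elements of $\sigma$ and of $\bar\sigma$, and their product is $(-1)^{\mathrm{pos}(v)-1}$, which the twist $\varepsilon$ absorbs. Once this holds, $\Phi$ is an isomorphism of complexes, and therefore
$$\tilde H^{j}(K^*)\cong H_{n-j-2}(\Delta^V,K).$$

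\textbf{Step 3 (conclusion).} Since $\Delta^V$ is a simplex, it is acyclic in augmented homology. The long exact sequence of the pair then gives $H_{m}(\Delta^V,K)\cong\tilde H_{m-1}(K)$ for all $m$. Taking $j=n-i-3$ yields $\tilde H^{n-i-3}(K^*)\cong H_{i+1}(\Delta^V,K)\cong\tilde H_i(K)$, as claimed.

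The degenerate cases are covered by the use of augmented complexes with the empty face, but I would check them explicitly:
\begin{itemize}
\item $K=\Delta^V$: then $K^*$ is void and both sides vanish.
\item $K$ void: then $K^*=\Delta^V$ and both sides vanish.
\item $K=\{\emptyset\}$: then $K^*=\partial\Delta^V$, and the formula gives $\tilde H_{-1}(K)\cong\mathbb{Z}\cong\tilde H^{n-2}(\partial\Delta^V)$, as expected.
\end{itemize}
Phantom vertices need no special treatment, because $V$ is fixed throughout as the ambient set and $n=|V|$.
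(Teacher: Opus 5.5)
Your proof is correct, and it is the standard chain-level argument, essentially the one in the cited Bj\"orner--Tancer note (the paper gives no proof of its own): signed complementation identifies $\tilde C^{\bullet}(K^*)$ with the shifted relative complex $\tilde C_{n-\bullet-2}(\Delta^V,K)$, and the long exact sequence of the pair finishes the argument. Two cosmetic points: $(-1)^{\sum_{v\in\sigma}\mathrm{pos}(v)}$ differs from the actual shuffle sign by $(-1)^{\binom{|\sigma|+1}{2}}$, which is harmless because it depends only on degree (with your choice the sign bookkeeping gives exactly $\Phi\delta=-\partial\Phi$); and Step 3 uses acyclicity of $\Delta^V$, so you need $n\geq 1$ (for $n=0$ the statement itself fails, e.g.\ $K=\{\emptyset\}$).
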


A \textit{cover} of $K$ is a family of subcomplexes 
$\mathcal{U}=\{L_1,\dots,L_n\}$ such that $K=L_1\cup\cdots\cup L_n$. The \textit{nerve} of a cover $\mathcal{U}$ is the simplicial complex $\mathcal{N}(\mathcal{U})$ with vertex set $\mathcal{U}$ and a subset of 
subcomplexes is a complex if in the intersection there is a non-empty simplex, these intersections will be called \textit{non-empty intersections}.
\begin{theorem}[Nerve Theorem, {\citep[See Theorem 10.6]{bjornertopmeth}}]\label{nrvthm}
Let $K$ be a simplicial complex and take $\mathcal{U}=\{L_1,\dots,L_n\}$ a cover of $K$ such that:
\begin{enumerate}
    \item $L_i$ is contractible for all $i$.
    \item Any non-empty intersection $L_{i_1}\cap\cdots\cap L_{i_m}$ is contractible.
\end{enumerate}
Then $K\simeq\mathcal{N}(\mathcal{U})$
\end{theorem}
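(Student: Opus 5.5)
The plan is to deduce the statement from Quillen's fiber lemma, applied to a natural order-reversing map between face posets. Let $F(K)$ be the poset of non-empty simplices of $K$ ordered by inclusion, and $F(\mathcal{N})$ the face poset of $\mathcal{N}(\mathcal{U})$. Their order complexes are the barycentric subdivisions, so $\Delta(F(K))\cong K$ and $\Delta(F(\mathcal{N}))\cong\mathcal{N}(\mathcal{U})$. It therefore suffices to produce a map of posets that Quillen's lemma shows to be a homotopy equivalence.

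\textbf{The map.} Define $f\colon F(K)\longrightarrow F(\mathcal{N})$ by $f(\sigma)=\{L_i:\;\sigma\in L_i\}$.
\begin{itemize}
\item The set $f(\sigma)$ is non-empty because $\mathcal{U}$ covers $K$.
\item It is a simplex of the nerve, since $\sigma$ is a non-empty simplex lying in $\bigcap_{L_i\in f(\sigma)}L_i$.
\item If $\sigma\subseteq\sigma'$, then every subcomplex containing $\sigma'$ also contains $\sigma$, so $f(\sigma')\subseteq f(\sigma)$. Thus $f$ is order-reversing.
\end{itemize}
Equivalently, $f$ is order-preserving as a map $F(K)\longrightarrow F(\mathcal{N})^{op}$, and the order complex of $F(\mathcal{N})^{op}$ is the same simplicial complex.

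\textbf{The fibers.} By Quillen's fiber lemma, in the form for order-reversing maps, it is enough to check that $f^{-1}(F(\mathcal{N})_{\geq\tau})$ is contractible for every simplex $\tau=\{L_{i_1},\dots,L_{i_m}\}$ of the nerve. This fiber is $\{\sigma:\;f(\sigma)\supseteq\tau\}=\{\sigma\neq\emptyset:\;\sigma\in L_{i_1}\cap\cdots\cap L_{i_m}\}$, which is exactly the face poset of the intersection $L_{i_1}\cap\cdots\cap L_{i_m}$. Because $\tau$ is a simplex of the nerve, this intersection is non-empty. It is contractible by hypothesis 1 when $m=1$ and by hypothesis 2 when $m\geq2$. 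Its order complex is the barycentric subdivision of the intersection, hence contractible. Quillen's lemma then gives $K\simeq\Delta(F(K))\simeq\Delta(F(\mathcal{N}))\cong\mathcal{N}(\mathcal{U})$.

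\textbf{Main obstacle.} The only real input is Quillen's fiber lemma itself. If we prefer not to assume it, I would prove the needed instance directly with the homotopy colimits already mentioned in the preliminaries. Consider the diagram $\mathcal{X}$ over $F(\mathcal{N})^{op}$ sending $\tau$ to $\bigcap_{L\in\tau}L$, with inclusion maps. All maps are inclusions of subcomplexes over a finite poset, so $\hocolim\mathcal{X}\simeq\colim\mathcal{X}=K$. The projection $\mathcal{X}\to\ast$ is an objectwise homotopy equivalence, since every value is contractible. By homotopy invariance of homotopy colimits, $\hocolim\mathcal{X}\simeq\hocolim\ast=\Delta(F(\mathcal{N}))\cong\mathcal{N}(\mathcal{U})$.

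The delicate point in this alternative route is verifying the equality $\colim\mathcal{X}=K$, which holds because every simplex of $K$ lies in some $L_i$. Once that is checked, the cofibrancy remark from the preliminaries applies verbatim.
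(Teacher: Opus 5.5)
The paper gives no proof of this statement. It is quoted from Bj\"orner's Handbook chapter, so the comparison is with a citation rather than an argument.

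Your main proof is correct. The map $f(\sigma)=\{L_i:\sigma\in L_i\}$ is a well-defined order-reversing map $F(K)\to F(\mathcal{N}(\mathcal{U}))$. The fiber over $F(\mathcal{N}(\mathcal{U}))_{\geq\tau}$ is exactly the face poset of $\bigcap_{L\in\tau}L$. That intersection is non-empty precisely because $\tau$ is a simplex of the nerve, so hypotheses 1 and 2 are used exactly where they are needed. Compared with the bare citation, your argument makes visible why the nerve has to be defined through intersections containing a non-empty simplex. It is still a reduction to a result of the same depth, since Quillen's fiber lemma and the Nerve Theorem are routinely derived from one another, but that is entirely adequate here.

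Your backup homotopy-colimit route also works, but not for the reason you give.

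\begin{itemize}
\item The principle that a diagram of subcomplex inclusions over a finite poset is automatically Reedy cofibrant, so that $\mathrm{hocolim}\simeq\mathrm{colim}$, is false in general. Take the poset $\{a,b,c,d\}$ with $a,b<c,d$ and every value a point. The colimit is a point, while the homotopy colimit is the order complex, a $4$-cycle.
\item What saves your diagram $\tau\mapsto\bigcap_{L\in\tau}L$ is that it is closed under intersections. If the values at $\tau'$ and $\tau''$ share a point of $|K|$, that point lies in the relative interior of a simplex belonging to both. Hence $\tau'\cup\tau''$ is a simplex of the nerve, and the common part is the value at $\tau'\cup\tau''$.
\item This observation makes every latching map the inclusion of a subcomplex, hence a cofibration. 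It also gives injectivity of $\mathrm{colim}\,\mathcal{X}\to K$. The covering property you cite only gives surjectivity.
\end{itemize}

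None of this affects your main argument via the fiber lemma.
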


We will say a cover $\mathcal{U}$ of a complex $K$ is a \textit{basis-like cover} if the non-empty intersections also are in $\mathcal{U}$. 
Following the idea in the proof of Theorem 3.1 of \citep{clicomgrppowers}, we will 
use the following theorem that can be deduced from Theorem 6 of \citep{mcordsnghmgrpshmty}.
\begin{theorem}\label{theopweakeqv}
Let $K,J$ be simplicial complexes and $p:K\longrightarrow J$ a map. If $\mathcal{U}$ is a basis-like cover such that $p|_{p^{-1}(U)}:p^{-1}(U)\longrightarrow U$ is a weak equivalence for all $U$ in $\mathcal{U}$,
then $p$ is a weak equivalence.
\end{theorem}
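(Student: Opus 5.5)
The approach is to recast both $J$ and $K$ as homotopy colimits over one poset and then apply homotopy invariance of homotopy colimits, rather than adapting McCord's open-cover argument directly. I assume, as in the intended applications, that $p$ is simplicial, so that each $p^{-1}(U)$ is a subcomplex of $K$. Let $\mathcal{P}$ be the poset of members of $\mathcal{U}$ ordered by inclusion, with the void complex discarded if it occurs. Define two functors $\mathcal{X},\mathcal{Y}\colon\mathcal{P}\longrightarrow\mathrm{Top}$ by $\mathcal{Y}(U)=U$ and $\mathcal{X}(U)=p^{-1}(U)$, both sending $U\subseteq U'$ to the inclusion. The restrictions $p|_{p^{-1}(U)}$ then form a natural transformation $\mathcal{X}\Rightarrow\mathcal{Y}$, and by hypothesis every component is a weak equivalence.

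First I show that $J=\colim_\mathcal{P}\mathcal{Y}$ and $K=\colim_\mathcal{P}\mathcal{X}$.
\begin{itemize}
\item Since $\mathcal{U}$ covers $J$, the canonical map $\colim_\mathcal{P}\mathcal{Y}\to J$ is surjective.
\item Two points identified in $J$ lie in some $U\cap U'$. Because $\mathcal{U}$ is basis-like, this intersection is again a member $U''\in\mathcal{P}$, and $U''$ maps into both $U$ and $U'$. Hence the two points are already identified in the colimit, so the map is a continuous bijection.
\item A continuous bijection of finite CW-complexes that is cellular on each piece is a homeomorphism, since a set is closed in $J$ precisely when its intersection with each subcomplex is closed.
\end{itemize}
The same argument works for $K$. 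The sets $p^{-1}(U)$ cover $K$ because $p$ maps into $J=\bigcup U$. Moreover $p^{-1}(U)\cap p^{-1}(U')=p^{-1}(U\cap U')$, which is either empty or equal to $p^{-1}(U'')$ with $U''\in\mathcal{P}$. Here one has to check that an empty intersection of the $U$'s causes no trouble: the corresponding preimages are then disjoint, so no identifications are needed.

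Next I pass from colimits to homotopy colimits. All maps in $\mathcal{X}$ and $\mathcal{Y}$ are inclusions of subcomplexes and $\mathcal{P}$ is finite. By the remark in the preliminaries (the Reedy/cofibrancy fact from \citep{cubicalhomotopy}), this gives $\colim_\mathcal{P}\mathcal{X}\simeq\hocolim_\mathcal{P}\mathcal{X}$ and $\colim_\mathcal{P}\mathcal{Y}\simeq\hocolim_\mathcal{P}\mathcal{Y}$. These identifications are natural, so the map $p$ corresponds to the map induced by the natural transformation. By the homotopy invariance of homotopy colimits, an objectwise weak equivalence of diagrams induces a weak equivalence $\hocolim\mathcal{X}\to\hocolim\mathcal{Y}$. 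Combining this with the identifications shows that $p$ is a weak equivalence.

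The main obstacle is the compatibility in this last step. The comparison maps $\hocolim\to\colim$ must commute with the maps induced by $p$, so that the map shown to be a weak equivalence is really $p$ and not merely some map between spaces of the same homotopy type. This naturality should follow from the functoriality of the projection $\hocolim\to\colim$.

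As a fallback I would follow McCord directly. Thicken each subcomplex $U$ to its open regular neighbourhood $N(U)$ in the second barycentric subdivision of $J$, chosen so that $N(U)\cap N(U')=N(U\cap U')$. This gives a basis-like open cover. Since $p$ is simplicial, $p^{-1}(N(U))$ deformation retracts onto $p^{-1}(U)$ compatibly with these retractions, so McCord's Theorem 6 \citep{mcordsnghmgrpshmty} applies.
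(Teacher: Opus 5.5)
Your argument is correct, but it takes a different route from the paper. The paper gives no proof beyond saying the statement ``can be deduced from Theorem~6 of McCord''. McCord's theorem is about basis-like \emph{open} covers, so a step passing from the closed cover by subcomplexes to an open one is implicit there.

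One way to make that step is with face posets viewed as finite $T_0$-spaces. There a subcomplex $U$ becomes an open down-set $\mathcal{F}(U)$, with $\mathcal{F}(U)\cap\mathcal{F}(U')=\mathcal{F}(U\cap U')$, and McCord's natural weak equivalences $|K'|\to\mathcal{F}(K)$ carry the conclusion back. Another way is regular neighbourhoods, as in your fallback. There you should subdivide $p$ too, replacing it by the induced map $K''\to J''$; this is homotopic to $p$ through maps carried by the same simplices. Then $p^{-1}(N(U))$ is the open-star neighbourhood of $(p^{-1}(U))''$. No compatibility of retractions is needed: two-out-of-three on the square of inclusions suffices.

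Your main route avoids open covers entirely and uses only the homotopy-colimit tools the paper already sets up. It also shows exactly where the basis-like hypothesis enters: it identifies the colimit with the union, and the colimit with the homotopy colimit. The projection $\hocolim\to\colim$ is natural in the diagram, so the square you worry about commutes on the nose. Assuming $p$ simplicial is the right choice: that is how the theorem is used, and it makes each $p^{-1}(U)$ a subcomplex.

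One step should be tightened. You cite the preliminary remark that every diagram of CW inclusions over a finite poset is cofibrant. As stated, that remark is false. The constant one-point diagram on the poset $a,b<c,d$ has colimit a point, while its homotopy colimit is $\simeq\mathbb{S}^1$. What saves you is again the basis-like property:
\begin{itemize}
\item The latching object at $U$ is the colimit over $\{V\in\mathcal{P}: V\subsetneq U\}$, and this down-set is itself closed under non-empty intersections.
\item By your own computation, that colimit is the union $\bigcup_{V\subsetneq U}V$, so the latching map is a subcomplex inclusion and hence a cofibration.
\item The same holds for $\mathcal{X}$, because $p^{-1}$ preserves unions and intersections.
\end{itemize}
Alternatively, cite the Projection Lemma of Welker--Ziegler--\v{Z}ivaljevi\'c. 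For each point, the members of $\mathcal{U}$ containing it have a least element, so their order complex is contractible. Hence $\hocolim\to\colim$ is a homotopy equivalence for both diagrams.
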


Notice that if $\mathcal{U}$ is a cover of $J$ which achieves the hypothesis of the Nerve Theorem \ref{nrvthm}, we can obtain a basis-like cover $\mathcal{U'}$ 
by taking all the non-empty intersections of $\mathcal{U}$. If $p:K\longrightarrow J$ is a simplicial map that achieves the hypothesis of Theorem \ref{theopweakeqv}, then the cover $p^{-1}(\mathcal{U'})$ achieves the 
hypothesis of the Nerve theorem.

Next lemma will alow us to study the homotopy type of the filtration for the disjoint union of two graphs.
\begin{lem}\label{lemunionorder}
For $k\geq3$, let $X_1,\dots,X_k$ be simplicial complexes such that:
\begin{itemize}
    \item[(a)] $\emptyset\neq X_i\cap X_{i+1}\neq\{\emptyset\}$ for $i\leq k-1$. 
    \item[(b)] For $i<j$ and any $i<l_1<\cdots<l_m<j$ 
    $$X_i\cap X_j\cap\bigcap_{r=1}^mX_{l_r}=X_i\cap X_j.$$
    \item[(c)] For any $i<k$ the inclusions $X_i\cap X_{i+1}\longhookrightarrow X_{i}$ and $X_i\cap X_{i+1}\longhookrightarrow X_{i+1}$ are null-homotopic.
\end{itemize}
Then 
$$\bigcup_{i=1}^kX_i\simeq\bigvee_{i=1}^kX_i\vee\bigvee_{i=1}^{k-1}\Sigma\left(X_i\cap X_{i+1}\right).$$
\end{lem}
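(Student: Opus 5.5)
Condition (b) says that the only intersections that matter are those between consecutive complexes. Indeed, for $i<j$ with $j\geq i+2$, (b) applied with $l_1=i+1,\dots,l_m=j-1$ gives
$$X_i\cap X_j\subseteq X_i\cap X_{i+1}\cap X_{i+2}\cap\cdots\cap X_j .$$
In particular $X_i\cap X_j\subseteq X_{l}$ for every $i<l<j$. The argument is an induction on $k$, gluing one complex at a time using Lemma \ref{homocolimpegado}. To cover the base case uniformly, I would prove the statement for all $k\geq2$; the case $k=2$ is the same gluing step as below.

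Set $Y_{m}=X_1\cup\cdots\cup X_{m}$. The key observation is that
$$Y_{m}\cap X_{m+1}=X_m\cap X_{m+1}.$$
Indeed, for $i<m$ we have $X_i\cap X_{m+1}\subseteq X_m$ by the remark above, so $X_i\cap X_{m+1}\subseteq X_m\cap X_{m+1}$. Hence $Y_{m+1}$ is the union of two subcomplexes whose intersection is $Z=X_m\cap X_{m+1}$. A union of subcomplexes along a common subcomplex is a pushout along cofibrations, so it is homotopy equivalent to the homotopy pushout $\hocolim(Y_m\hookleftarrow Z\hookrightarrow X_{m+1})$.

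To apply Lemma \ref{homocolimpegado} I need both inclusions to be null-homotopic. The inclusion $Z\hookrightarrow X_{m+1}$ is null-homotopic by (c). The inclusion $Z\hookrightarrow Y_m$ factors as $Z\hookrightarrow X_m\hookrightarrow Y_m$, and the first map is null-homotopic by (c), so the composite is too. Condition (a) guarantees that $Z$ is a nonempty complex, not void and not $\{\emptyset\}$, so basepoints exist and the suspension is the usual one. The lemma then gives
$$Y_{m+1}\simeq Y_m\vee X_{m+1}\vee\Sigma(X_m\cap X_{m+1}).$$
Iterating this from $m=1$ to $m=k-1$ produces exactly $\bigvee_{i=1}^k X_i\vee\bigvee_{i=1}^{k-1}\Sigma(X_i\cap X_{i+1})$.

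The main obstacle is the induction step. One concern is that Lemma \ref{homocolimpegado} is used with $Y_m$ known only up to homotopy equivalence. This causes no problem, because null-homotopy of the inclusion $Z\hookrightarrow Y_m$ was established directly in $Y_m$ via $X_m$, before any identification with a wedge. Wedges of connected CW-complexes are also well-defined up to homotopy regardless of the choice of basepoints. If the $X_i$ are not connected, I would interpret the wedge via the homotopy pushout along the basepoints that Lemma \ref{homocolimpegado} itself uses, and the iteration still goes through.
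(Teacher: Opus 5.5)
Your proof is correct and follows essentially the same route as the paper. Both arguments use (b) to show that the partial union meets the next complex only in a consecutive intersection. Both then obtain null-homotopy from (c) by factoring through a single $X_i$, and apply Lemma \ref{homocolimpegado} to the pushout along cofibrations. The only difference is direction: the paper peels off $X_1$ and applies induction to $X_2,\dots,X_{k+1}$, whereas you attach $X_{m+1}$ to $X_1\cup\cdots\cup X_m$. Your explicit $k=2$ base case is also slightly cleaner, since the paper's $k=3$ step tacitly uses it.
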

\begin{proof}
The proof is by induction over $k$.
For $k=3$, notice that 
$$\bigcup_{i=1}^3X_i=\colim\left(X_1\longhookleftarrow X_1\cap(X_2\cup X_3)\longhookrightarrow X_2\cup X_3\right).$$
Now, by hypothesis we have that $X_1\cap X_3=X_1\cap X_2\cap X_3\subseteq X_1\cap X_2$, therefore
$$\bigcup_{i=1}^3X_i=\colim\left(X_1\longhookleftarrow X_1\cap X_2\longhookrightarrow X_2\cup X_3\right).$$
Because the maps are cofibrations, we have that the pushout is homotopy equivalent to the homotopy pushout \citep[see Proposition 3.6.17]{cubicalhomotopy}. By Lemma \ref{homocolimpegado} the union has the homotopy 
type desired. 
Before proceeding to the general case, we need to show that under the hypothesis of point (a) 
\begin{equation}\label{equaunion}
    X_1\cap\left(\bigcup_{i=2}^kX_i\right)=X_1\cap X_2.
\end{equation}
For $k=3$ this is true, assume it is true for $k$ and we take $X_1,\dots,X_{k+1}$ simplicial complexes that achieve (a), where $k+1\geq4$. Now
$$X_1\cap\left(\bigcup_{i=2}^kX_i\right)=(X_1\cap X_2)\cup\left(X_1\cap\left(\bigcup_{i=3}^kX_i\right)\right)=(X_1\cap X_2)\cup(X_1\cap X_3)$$
and because $X_1\cap X_3=X_1\cap X_2\cap X_3\subseteq X_1\cap X_2$, we have that (\ref{equaunion}) is true. 

Now assume the result is true for $k$ and take $X_1,\dots,X_{k+1}$ simplicial complexes that achieve the hypotheses of the lemma. Then 
$$\bigcup_{i=1}^{k+1}X_i=\colim\left(X_1\longhookleftarrow X_1\cap\left(\bigcup_{i=2}^{k+1}X_i\right)\longhookrightarrow\bigcup_{i=2}^{k+1}X_i\right)=$$
$$\colim\left(X_1\longhookleftarrow X_1\cap X_2\longhookrightarrow\bigcup_{i=2}^{k+1}X_i\right).$$
In the last pushout both maps are cofibrations and null-homotopic, calculating the homotopy pushout by Lemma \ref{homocolimpegado} and inductive hypothesis we obtain the result.
\end{proof}
An easy way to obtain a family like the required in Lemma \ref{lemunionorder} is two take two filtrations of simplicial complexes $L_1\geq L_2\geq\cdots\geq L_k$ and 
$J_1\leq J_2\leq\cdots\leq J_k$ such that in both filtrations all the inclusions are null-homotopic, then the complexes $X_i=L_i*J_i$ achieve the hypotheses.

For a finite poset $\mathcal{P}$ its \textit{order complex} $\Delta(\mathcal{P})$ is the simplicial complex with the elements of the poset as vertices and a set $\{a_1,\dots,a_d\}$ is a simplex if 
$a_1<\cdots<a_d$.

For positive integers $k\leq d$, a \textit{composition} of $d$ with $k$ parts is an ordered $k$-tuple $(d_1,\dots,d_k)$ of positive integers such that $d_i\geq1$ for all $i$ and $d_1+\cdots+d_k=d$. 
The set of all compositions of $d$ with $k$ parts will be denoted $C(d,k)$, it is known that $|C(d,k)|=\binom{d-1}{k-1}$ \citep[see Theorem 3.3]{combcompwords}. For $d>k$, we define $\mathcal{C}_{d,k}$ as 
the poset on $C(d,k)\cup C(d-1,k)\cup\cdots\cup C(k+1,k)$ where $(n_1,\dots,n_k)\leq(m_1,\dots,m_k)$ if $n_i\leq m_i$ for all $i$. Notice that for any element of $\mathcal{C}_{d,k}$ its coordinates can not be 
greater to $d$. We will denote $\hat{\mathcal{C}}_{d,k}$ to the poset obtained by adding $(1,1,\dots,1)$.

\begin{prop}\label{propcmk}
For two positive integers $d,k$, taking $m=d+k-1$ ,we have that
$$\Delta\left(\mathcal{C}_{m,k}\right)\simeq\left\lbrace\begin{array}{cc}
  *   & \mbox{if } k\leq d-1\\
  \displaystyle\bigvee_{\binom{k-1}{d-1}}\mathbb{S}^{d-2}   & k\geq d
\end{array}\right.$$
\end{prop}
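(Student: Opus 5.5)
The plan is to reindex the poset and then retract it onto a skeleton of a simplex. Subtracting $1$ from every coordinate sends a composition $(n_1,\dots,n_k)$ of $j$ with $k$ parts to a vector $a=(a_1,\dots,a_k)$ of non-negative integers with $a_1+\cdots+a_k=j-k$. This is an order isomorphism for the componentwise order. Since $m=d+k-1$, the range $k+1\leq j\leq m$ becomes $1\leq \sum a_i\leq d-1$. So $\mathcal{C}_{m,k}$ is isomorphic to the poset $Q$ of non-zero vectors $a\in\mathbb{Z}_{\geq0}^k$ with $\sum_i a_i\leq d-1$, ordered componentwise.

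Next define $r\colon Q\longrightarrow Q$ by sending $a$ to the indicator vector of its support $\mathrm{supp}(a)=\{i:\;a_i>0\}$. This map is well defined: the support is non-empty because $a\neq 0$, and $|\mathrm{supp}(a)|\leq\sum_i a_i\leq d-1$. It has three properties:
\begin{itemize}
\item it is order preserving, since $a\leq b$ implies $\mathrm{supp}(a)\subseteq\mathrm{supp}(b)$;
\item it satisfies $r(a)\leq a$;
\item it satisfies $r\circ r=r$.
\end{itemize}
By the standard poset homotopy lemma (Quillen), a map $f$ of posets with $f(x)\leq x$ for all $x$ is homotopic to the identity on order complexes. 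Hence $\Delta(Q)$ deformation retracts onto $\Delta(r(Q))$. Concretely, the inclusion $\iota\colon r(Q)\hookrightarrow Q$ satisfies $r\circ\iota=\mathrm{id}$, and $\iota\circ r\leq\mathrm{id}$, so $\iota\circ r$ is homotopic to the identity.

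The image $r(Q)$ is the poset of non-empty subsets $S\subseteq\underline{k}$ with $|S|\leq d-1$, ordered by inclusion. This is the face poset of $\mathrm{sk}_{d-2}\Delta^{k-1}$. Its order complex is the barycentric subdivision of that skeleton, and so it is homeomorphic to $\mathrm{sk}_{d-2}\Delta^{k-1}$. This gives two cases.
\begin{itemize}
\item If $k\leq d-1$, then every non-empty subset of $\underline{k}$ occurs. The skeleton is the whole simplex $\Delta^{k-1}$, which is contractible.
\item If $k\geq d$, the known fact from the preliminaries, applied with $n=k-1$ and dimension $d-2$, gives $\mathrm{sk}_{d-2}\Delta^{k-1}\simeq\bigvee_{\binom{k-1}{d-1}}\mathbb{S}^{d-2}$.
\end{itemize}

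The only point needing care is the homotopy lemma for the retraction $r$. I would justify it by the usual argument: two order-preserving maps $f\leq g$ induce homotopic maps on order complexes. One checks this by noting that, for every chain $\sigma$, the set $f(\sigma)\cup g(\sigma)$ is still a chain, so the straight-line homotopy stays inside a simplex. Everything else is routine bookkeeping of the reindexing and of the range $1\leq\sum a_i\leq d-1$.
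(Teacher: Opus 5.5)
Your reduction to $\mathrm{sk}_{d-2}\Delta^{k-1}$ is correct, but the justification you give for the one step you flagged is false. For order-preserving $f\le g$ and $x<y$ you only get $f(x)\le f(y)\le g(y)$ and $f(x)\le g(x)\le g(y)$. Nothing makes $f(y)$ comparable with $g(x)$, so $f(\sigma)\cup g(\sigma)$ need not be a chain.

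This already fails for your $r$. Take $k=2$, $d\geq 4$ and the chain $\sigma=\{(2,0)<(2,1)\}$ in $Q$. Then $r(\sigma)=\{(1,0),(1,1)\}$, and $(1,1)$ is incomparable with $(2,0)$. So $\iota\circ r$ and the identity are not contiguous, and the straight-line homotopy is not defined.

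The lemma itself is true (Quillen's homotopy property), so you may simply cite it. If you want a proof, use the map $H\colon Q\times\{0<1\}\to Q$ with $H(x,0)=r(x)$ and $H(x,1)=x$. It is order preserving: if $(x,0)\le(y,1)$ then $r(x)\le x\le y$. Since $\Delta(Q\times\{0<1\})\cong\Delta(Q)\times[0,1]$, $H$ gives the required homotopy. Alternatively, apply Quillen's fiber lemma directly to $\iota$: the preimage $\iota^{-1}(Q_{\le y})$ has as its maximum the indicator vector of $\mathrm{supp}(y)$, so it is a cone.

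With that repair, your proof takes a different route from the paper's. The paper covers $\Delta(\mathcal{C}_{m,k})$ by the stars of the $k$ minimal elements (the compositions of $k+1$). A family of $n$ of these meets exactly when the vector with $2$ in the chosen $n$ coordinates and $1$ elsewhere has sum $k+n\le m$, that is $n\le d-1$. In that case the intersection is the cone of chains above that vector. The Nerve Theorem then identifies the complex with $\mathrm{sk}_{d-2}\Delta^{k-1}$.

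You instead collapse the poset onto its subposet of $0/1$ vectors, which is the face poset of the same skeleton. In fact your $r(Q)$ is exactly the face poset of the paper's nerve, so both arguments see the same combinatorics. The paper's version needs only the Nerve Theorem and contractibility of cones. Yours replaces the Nerve Theorem with Quillen's homotopy property and in return gives an explicit deformation retraction onto a subcomplex homeomorphic to $\mathrm{sk}_{d-2}\Delta^{k-1}$, rather than a homotopy equivalence with an abstract nerve.
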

\begin{proof}
We will show a covering of $\Delta\left(\mathcal{C}_{m,k}\right)$ such that the nerve have the homotopy type desired. For this, 
first we take $\Vec{e}_i$ the ordered $k$-tuple with the $i$-coordinate equal to $2$ and the rest equal to $1$. Next we take 
$\Vec{v}_i=\Vec{e}_i+(\Vec{e}_1+\cdots+\Vec{e}_k)$. For all $i$, $\Vec{v}_i$ is a vertex of $\Delta\left(\mathcal{C}_{m,k}\right)$ and 
$$\Delta\left(\mathcal{C}_{m,k}\right)=\bigcup_{i=1}^k\mathrm{st}(\Vec{v}_i).$$
For $i\neq j$ we have that $\mathrm{st}(\Vec{v}_i)\cap\mathrm{st}(\Vec{v}_j)=\mathrm{st}(\Vec{v}_i+\Vec{e}_j)=\mathrm{st}(\Vec{v}_j+\Vec{e}_i)\simeq*$.
In general, for $2\leq n\leq d-1$ and $\{i_1,\dots,i_n\}$, we have that 
$$\bigcap_{j=1}^n\mathrm{st}(\Vec{v}_{i_1})=\mathrm{st}(\Vec{v}_{i_1}+\Vec{e}_{i_2}+\cdots+\Vec{e}_{i_n})\simeq*.$$
Thus $\Delta\left(\mathcal{C}_{m,k}\right)\simeq*$ for $k\leq d-1$ and for $k\geq d$ the intersection of any $d-1$ complexes is contractible.
For $n\geq d$ and  $\{i_1,\dots,i_n\}$, we have that 
$$\bigcap_{j=1}^n\mathrm{st}(\Vec{v}_{i_1})=\{\emptyset\}$$
because any vertex in the intersection should have $n$ coordinates greater or equal to $2$ and the sum of the coordinates will bigger than $m$, but this can not happen.
Therefore the nerve is isomorphic to $\mathrm{sk}_{d-2}\Delta^{k-1}$ which has the homotopy type desired.
\end{proof}

The geometric realization of the nerve of a small category $\mathcal{I}$ is homeomorphic to the homotopy colimit of the functor sending every object in $\mathcal{I}$ to $*$ 
\citep[see Example 4.1 \& Remark 4.2]{dugger2008primer}. For a poset $\mathcal{P}$ the geometric realization of its nerve as a category is homeomorphic to the order complex of the poset 
\citep[see Examples 4.2-4.4]{friedmansimsets}. 
Given a small category $\mathcal{I}$ and two diagrams $F,G:\mathcal{I}\longrightarrow\mathrm{Top}$, a natural transformation $\alpha:F\longrightarrow G$ such that
$\alpha(i):F(i)\longrightarrow G(i)$ is a weak equivalence for every object $i$ in $\mathcal{I}$, then the natural map $\hocolim_\mathcal{I}F\longrightarrow\hocolim_\mathcal{I}G$ is a weak equivalence  
\citep[see Theorem 8.3.7]{cubicalhomotopy}. From all this we obtain the following corollary that is needed in last section.
\begin{cor}\label{corhocolimcmk}
For two positive integers $d,k$. If $m=d+k-1$ and  $\mathcal{X}:\mathcal{C}_{m,k}\longrightarrow\mathrm{Top}$ is a diagram such that 
$\mathcal{X}(\Vec{v})\simeq*$ for all $\Vec{v}$ in $C_{m,k}$, then
$$\hocolim_{\mathcal{C}_{m,k}}\mathcal{X}\simeq\Delta(\mathcal{C}_{m,k}).$$
\end{cor}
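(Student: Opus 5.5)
The plan is to compare $\mathcal{X}$ with the constant diagram at a point and then identify the homotopy colimit of that constant diagram with the order complex, using the two facts recalled just before the statement. Let $\ast:\mathcal{C}_{m,k}\longrightarrow\mathrm{Top}$ be the constant functor that sends every object to a one-point space and every morphism to the identity. For each $\Vec{v}$ take $\alpha(\Vec{v}):\mathcal{X}(\Vec{v})\longrightarrow\ast$ to be the unique map. Naturality is automatic, because $\ast$ is terminal in $\mathrm{Top}$: for every relation $\Vec{u}\leq\Vec{v}$ the square made of $\mathcal{X}(\Vec{u}<\Vec{v})$, $\alpha(\Vec{u})$, $\alpha(\Vec{v})$ and the identity of $\ast$ commutes.

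Next, each component $\alpha(\Vec{v})$ is a weak equivalence, since by hypothesis $\mathcal{X}(\Vec{v})\simeq\ast$, and a map from a contractible space to a point induces isomorphisms on all homotopy groups (and on $\pi_0$). The homotopy invariance result quoted above \citep[Theorem 8.3.7]{cubicalhomotopy} then shows that the induced map
$$\hocolim_{\mathcal{C}_{m,k}}\mathcal{X}\longrightarrow\hocolim_{\mathcal{C}_{m,k}}\ast$$
is a weak equivalence.

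It remains to identify the target. By \citep[Example 4.1 \& Remark 4.2]{dugger2008primer}, $\hocolim_{\mathcal{C}_{m,k}}\ast$ is homeomorphic to the geometric realization of the nerve of $\mathcal{C}_{m,k}$ viewed as a category. By \citep{friedmansimsets}, that realization is homeomorphic to the order complex $\Delta(\mathcal{C}_{m,k})$. Hence $\hocolim_{\mathcal{C}_{m,k}}\mathcal{X}$ is weakly equivalent to $\Delta(\mathcal{C}_{m,k})$.

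The only step that needs real care is upgrading this weak equivalence to a homotopy equivalence. When the $\mathcal{X}(\Vec{v})$ are CW complexes, as in all the intended applications (simplicial complexes with inclusion maps), the homotopy colimit is again a CW complex. Whitehead's theorem in its weak-equivalence form then turns the weak equivalence between CW complexes into a homotopy equivalence. For general spaces the statement should be read up to weak equivalence. Finally, Proposition \ref{propcmk} then yields the explicit homotopy type, though this is not needed for the corollary itself.
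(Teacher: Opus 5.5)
Your argument is correct and is the paper's own: map $\mathcal{X}$ to the constant point diagram, invoke \citep[Theorem 8.3.7]{cubicalhomotopy}, and identify $\hocolim_{\mathcal{C}_{m,k}}\ast$ with $\Delta(\mathcal{C}_{m,k})$ through \citep{dugger2008primer} and \citep{friedmansimsets}. Your Whitehead step makes explicit the passage from weak equivalence to homotopy equivalence that the paper leaves tacit, and your caveat for general spaces is unnecessary: each $\mathcal{X}(\Vec{v})$ is genuinely contractible, and the Homotopy Lemma of \citep{zieglerhocolim} turns objectwise homotopy equivalences directly into a homotopy equivalence of homotopy colimits.
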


\section{Definition and some properties}
For a graph $G$, its total $d$-cut complex is
$$\Delta_d^t(G)=\{\sigma\subseteq V(G):\;\alpha(G[\sigma^c])\geq d\}$$
and its $d$-bounded independence complex is 
$$BI_d(G)=\{\sigma\subset V(G):\;\alpha(G[\sigma])<d\}$$
From the definition is clear that $\Delta_d^t(G)=BI_d(G)^*$. For 
$d=2$, $BI_2(G)$ is known as the clique complex of $G$.
Clearly each family of these complexes form a filtration:
$$\emptyset\subseteq\{\emptyset\}\subseteq BI_2(G)\subseteq BI_3(G)\subseteq\cdots\subseteq BI_{\alpha(G)}(G)\subseteq\Delta^{V(G)}$$
$$\emptyset\subseteq\Delta_{\alpha(G)}^t(G)\subseteq\cdots\subseteq\Delta_{2}^t(G)\subseteq\partial\Delta^{V(G)}\subseteq\Delta^{V(G)}$$

From Alexander duality we have that for a graph of order $n$
$$\tilde{H}_q(BI_d(G))\cong\tilde{H}^{n-q-3}(\Delta_d^t(G)),\;\;\tilde{H}_q(\Delta_d^t(G))\cong\tilde{H}^{n-q-3}(BI_d(G))$$

A well known result in clique complexes is the following:
If $N_G[v]\subseteq N_G[u]$, then $BI_2(G)\simeq BI_2(G-v)$ \citep[Proposition 3.2]{prisner}. We will generalize this result to all bounded independence complexes in next lemma.
\begin{lem}\label{lemneghbhd}
For all $d\geq2$ and $G$ a graph of order $n\geq2$:
\begin{enumerate}
    \item If there are distinct vertices $u,v$ such that $N_G[v]\subseteq N_G[u]$, then $BI_d(G)\simeq BI_d(G-v)$.
    \item If there are distinct vertices $u,v$ such that $N_G(v)\subseteq N_G(u)$, then 
    $$BI_d(G)\simeq BI_d(G-v)\vee\Sigma\mathrm{lk}_{BI_d(G)}(v).$$
\end{enumerate}
\end{lem}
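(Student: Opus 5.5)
The plan is the standard vertex decomposition $BI_d(G)=\mathrm{del}(v)\cup\mathrm{st}(v)$ with $\mathrm{del}(v)\cap\mathrm{st}(v)=\mathrm{lk}(v)$. Here $\mathrm{del}_{BI_d(G)}(v)=BI_d(G-v)$ and $\mathrm{st}(v)$ is a cone, so
$$BI_d(G)\simeq\hocolim\left(BI_d(G-v)\longhookleftarrow \mathrm{lk}(v)\longrightarrow *\right),$$
which is the mapping cone of $\mathrm{lk}(v)\hookrightarrow BI_d(G-v)$. Both parts therefore reduce to properties of the inclusion $\iota:\mathrm{lk}(v)\hookrightarrow BI_d(G-v)$. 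For part 1 I would show that $\mathrm{lk}(v)$ is contractible, so that gluing the cone does not change the homotopy type. For part 2 I would show that $\iota$ is null-homotopic, so that Lemma \ref{homocolimpegado} (with $Z=\mathrm{lk}(v)$ and one target a point) gives $BI_d(G-v)\vee\Sigma\mathrm{lk}(v)$. (If $v$ is a phantom vertex the statements are degenerate; for $d\ge2$ every singleton is in $BI_d$, so this does not arise.)

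\emph{Part 1.} I claim $\mathrm{lk}(v)$ is a cone with apex $u$. Take $\sigma\in\mathrm{lk}(v)$, so $v,u\notin\sigma$ is possible and $\alpha(G[\sigma\cup\{v\}])<d$; I need $\alpha(G[\sigma\cup\{u,v\}])<d$. Let $I$ be an independent set of $G[\sigma\cup\{u,v\}]$. If $u\notin I$, then $I\subseteq\sigma\cup\{v\}$ and $|I|<d$. If $u\in I$, then $v\notin I$, because $v\in N_G[v]\subseteq N_G[u]$ makes $u,v$ adjacent. Also no vertex of $I-\{u\}$ is in $N_G[u]\supseteq N_G[v]$. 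Hence $(I-\{u\})\cup\{v\}$ is independent in $G[\sigma\cup\{v\}]$ and has the same size, so $|I|<d$. The same argument with $\sigma\cup\{u\}$ shows the apex condition is consistent, so $\mathrm{lk}(v)$ is a cone and contractible.

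\emph{Part 2.} Now $u,v$ are non-adjacent (since $u\notin N(u)$), so the cone argument fails. Instead I would show that $\iota$ factors through a contractible subcomplex of $BI_d(G-v)$, namely $\mathrm{st}_{BI_d(G-v)}(u)$. This needs $\mathrm{lk}(v)\subseteq\mathrm{st}_{BI_d(G-v)}(u)$, that is, $\sigma\in\mathrm{lk}(v)$ implies $\alpha(G[\sigma\cup\{u\}])<d$ with $u$ possibly in $\sigma$. Let $I$ be independent in $G[\sigma\cup\{u\}]$ with $u\in I$. Then $I-\{u\}$ avoids $N_G(u)\supseteq N_G(v)$, and $v\notin\sigma$, so $(I-\{u\})\cup\{v\}$ is independent in $G[\sigma\cup\{v\}]$. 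Therefore $|I|<d$. Independent sets not containing $u$ lie in $\sigma\subseteq\sigma\cup\{v\}$, so they are handled directly. Thus $\iota$ factors through a cone and is null-homotopic. Lemma \ref{homocolimpegado} then applies, since the map to $*$ is trivially null-homotopic and the pushout along a cofibration agrees with the homotopy pushout.

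\emph{Main obstacle.} The combinatorics are short; the step to handle carefully is the identification of the honest union with the homotopy pushout. The inclusions are subcomplex inclusions, hence cofibrations, so this identification holds. I would also check that $u$ is a genuine vertex of $BI_d(G-v)$; this holds because singletons are in $BI_d$ for $d\ge2$.
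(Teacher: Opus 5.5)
Your proposal is correct and follows essentially the same route as the paper. Like the paper, you write $BI_d(G)$ as the homotopy pushout of $BI_d(G-v)\hookleftarrow\mathrm{lk}(v)\to *$, prove part 1 by showing $\mathrm{lk}(v)$ is a cone with apex $u$, and prove part 2 by factoring the inclusion through a cone on $u$ before applying Lemma \ref{homocolimpegado}. Your use of $\mathrm{st}_{BI_d(G-v)}(u)$ in place of the paper's $\{\{u\}\}*\mathrm{lk}_{BI_d(G)}(v)$ is slightly cleaner, since the join is not literally defined when $u$ is already a vertex of the link, which always happens for $d\geq3$.
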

\begin{proof}
We know that
$$BI_d(G)\simeq\hocolim\left(BI_d(G-v)\longhookleftarrow\mathrm{lk}_{BI_d(G)}(v)\longrightarrow*\right)$$
For (1), we take $\sigma$ a simplex of $\mathrm{lk}_{BI_d(G)}(v)$, then $\alpha(G[\sigma])<d$ and $\alpha(G[\sigma\cup\{v\}])<d$. If 
$$\alpha(G[\sigma\cup\{u,v\}])=d,$$ then there is a independent set $S$ in $\sigma$ such that $|S|=d-1$ and $S\cup\{u\}$ is an independent set. Because $N_G[v]\subseteq N_G[u]$, we have that 
$S\cup\{v\}$ is an independent set, but this can not happen. Therefore $\sigma\cup\{u\}$ is in $\mathrm{lk}_{BI_d(G)}(v)$ and $\mathrm{lk}_{BI_d(G)}(v)\simeq*$. 

For (2), with a similar argument as before we can see that the inclusion in the homotopy pushout can be decomposed as
$$\mathrm{lk}_{BI_d(G)}(v)\longhookrightarrow\{\{u\}\}*\mathrm{lk}_{BI_d(G)}(v)\longhookrightarrow BI_d(G-v).$$
Therefore the inclusion is null-homotopic. By Lemma \ref{homocolimpegado} we obtain the result.
\end{proof}

For connected chordal graphs it is known that the clique complex is contractible \citep[Lemma 3.1]{dochtermanengstrom}, with Lemma \ref{lemneghbhd} we can extend this result to all bounded independence complexes
using the same idea of the proof in \citep{dochtermanengstrom}.
\begin{prop}\label{propchordlbid}
If $G$ is a connected chordal graph, then $BI_d(G)\simeq*$ for all $d\geq2$.
\end{prop}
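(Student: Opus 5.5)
We argue by induction on the order $n$ of $G$, using Lemma \ref{lemneghbhd}(1) to remove simplicial vertices one at a time. If $n=1$, then $BI_d(G)=\Delta^{V(G)}$ is a point, since the single vertex has independence number $1<d$, so it is contractible.

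Now let $n\geq2$ and assume the statement holds for all connected chordal graphs with fewer vertices. By Theorem \ref{teosimplvrtx}, $G$ has a simplicial vertex $v$. Because $G$ is connected and $n\geq2$, $v$ has at least one neighbour $u$. Since $N_G(v)$ is a clique containing $u$, every vertex of $N_G[v]$ is either $u$ or adjacent to $u$. Hence $N_G[v]\subseteq N_G[u]$. Lemma \ref{lemneghbhd}(1) then gives $BI_d(G)\simeq BI_d(G-v)$ for every $d\geq2$.

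It remains to check that $G-v$ satisfies the induction hypothesis. It is chordal, because every induced subgraph of $G-v$ is an induced subgraph of $G$, so no induced cycle of length at least $4$ appears. It is connected, because any path in $G$ that passes through $v$ enters and leaves through two neighbours $w,w'$ of $v$; these are adjacent since $N_G(v)$ is a clique, so the segment $w\,v\,w'$ can be replaced by the edge $ww'$. By induction $BI_d(G-v)\simeq *$, and therefore $BI_d(G)\simeq *$.

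The only step that requires care is that deleting the simplicial vertex preserves both connectivity and chordality, and both follow from the clique property as shown above. Beyond that, the real content is Lemma \ref{lemneghbhd}(1), which was proved earlier.
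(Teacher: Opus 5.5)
Your proof is correct and follows the paper's approach: repeatedly delete a simplicial vertex $v$, which satisfies $N_G[v]\subseteq N_G[u]$ for any neighbour $u$, and apply Lemma \ref{lemneghbhd}(1) until a single vertex remains. Your version also fills in details the paper's one-line proof leaves implicit, namely why $N_G[v]\subseteq N_G[u]$ holds and why $G-v$ stays connected and chordal.
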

\begin{proof}
Assume $G$ has order $n\geq2$.
By Theorem \ref{teosimplvrtx} and Lemma \ref{lemneghbhd} we can order the vertices of $G$ in such way that 
$$BI_d(G)\simeq BI_d(G-v_1)\simeq BI_d(G-v_1-v_2)\simeq\cdots\simeq BI_d(G-v_1-\cdots-v_{n-1})\simeq*.$$
\end{proof}
Last Proposition tell us that the bounded independence complex of trees and  powers of paths are contractible for $d\geq2$. As we will see, the bounded independence complex of a forest can be non-contractible.

Now we give some results about the links and the deletions of the total cut complex.
\begin{lem}\citep{totcutcompl}\label{lemmlnkdelta}
Let $G$ be a graph and fix $d\geq2$. If $\sigma$ is a simplex of $\Delta_d^t(G)$, then
$\mathrm{lk}(\sigma)=\Delta_d^t(G-\sigma)$.
\end{lem}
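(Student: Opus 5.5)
This is a direct unwinding of the definitions, so I would prove it by double inclusion, working on the vertex set of $G-\sigma$, namely $V(G)-\sigma$. The key observation is that complements commute with removing $\sigma$. For $\tau\subseteq V(G)-\sigma$, the complement of $\tau$ taken inside $G-\sigma$ is $(V(G)-\sigma)-\tau=V(G)-(\sigma\cup\tau)$. It follows that
$$(G-\sigma)[(V(G)-\sigma)-\tau]=G[(\sigma\cup\tau)^c].$$

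\emph{First inclusion.} Let $\tau\in\mathrm{lk}(\sigma)$. By the definition of the link, $\tau\cap\sigma=\emptyset$ and $\tau\cup\sigma\in\Delta_d^t(G)$. The second condition says $\alpha(G[(\sigma\cup\tau)^c])\geq d$. By the identity above, this is exactly $\alpha\big((G-\sigma)[(V(G)-\sigma)-\tau]\big)\geq d$. Since $\tau\subseteq V(G)-\sigma$, we conclude $\tau\in\Delta_d^t(G-\sigma)$.

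\emph{Second inclusion.} Let $\tau\in\Delta_d^t(G-\sigma)$. Then $\tau\subseteq V(G)-\sigma$, so $\tau\cap\sigma=\emptyset$. The same identity, read in the other direction, gives $\alpha(G[(\sigma\cup\tau)^c])\geq d$, that is, $\sigma\cup\tau\in\Delta_d^t(G)$. Hence $\tau\in\mathrm{lk}(\sigma)$.

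There is no real obstacle in this argument. The only point needing care is bookkeeping about vertex sets. Both complexes are regarded on the vertex set $V(G)-\sigma$, and the convention on phantom vertices allows this even when some singletons are missing from one complex. For the equality to hold as complexes on the same ground set, both sides must use that ground set. One should also check the degenerate case $\sigma=\emptyset$, where both sides equal $\Delta_d^t(G)$.
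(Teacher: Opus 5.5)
Your proof is correct. The paper gives no proof of its own here and simply imports the lemma from \citep{totcutcompl}. Your double inclusion via the identity $(G-\sigma)[(V(G)-\sigma)-\tau]=G[(\sigma\cup\tau)^c]$ is the direct definitional verification that justifies it. The one condition you do not state explicitly in the second inclusion, namely $\tau\in\Delta_d^t(G)$, follows at once from $\sigma\cup\tau\in\Delta_d^t(G)$ because the complex is closed under subsets.
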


\begin{lem}\label{delcom}
Let $G$ be a graph such that $\alpha(G)\geq d\geq2$ and take a vertex $v$. 
\begin{itemize}
    \item[(a)] If $\alpha(G-v)\leq d-1$, then $\mathrm{del}(v)=\Delta_d^t(G)=\Delta^{N_G(v)}*\Delta_{d-1}^t(G-N_G[v])$.
    \item[(b)] If $\alpha(G-N_G[v])\leq d-2$, then $\mathrm{del}(v)=\Delta_d^t(G-v)$ and $\Delta_d^t(G)=\{\{v\}\}*\Delta_d^t(G-v)$.
    \item[(c)] If $\alpha(G-v)\geq d$ and $\alpha(G-N_G[v])\geq d-1$, then 
$$\mathrm{del}_{\Delta_d^t(G)}(v)=\Delta_d^t(G-v)\cup\left(\Delta^{N_G(v)}*\Delta_{d-1}^t(G-N_G[v])\right)$$
\end{itemize}
\end{lem}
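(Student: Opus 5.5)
The plan is to prove all three items from a single description of the simplices of $\Delta_d^t(G)$ that avoid $v$, and then specialise. Write $\sigma^c=V(G)-\sigma$. Fix $\sigma\subseteq V(G)$ with $v\notin\sigma$, so that $v\in\sigma^c$. An independent $d$-set $S\subseteq\sigma^c$ either avoids $v$ or contains $v$.
\begin{itemize}
\item If $v\notin S$, then $S\subseteq\sigma^c-v$, which is exactly the complement of $\sigma$ inside $V(G-v)$. So this case occurs iff $\sigma\in\Delta_d^t(G-v)$.
\item If $v\in S$, then $S-v$ is an independent $(d-1)$-set in $\sigma^c-N_G[v]$. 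That set is the complement of $\sigma\cap(V(G)-N_G[v])$ inside $V(G-N_G[v])$. Conversely, any such $(d-1)$-set together with $v$ is an independent $d$-set in $\sigma^c$. So this case occurs iff $\sigma\cap(V(G)-N_G[v])\in\Delta_{d-1}^t(G-N_G[v])$, with no restriction on $\sigma\cap N_G(v)$.
\end{itemize}
Since the two parts $\sigma\cap N_G(v)$ and $\sigma\cap(V(G)-N_G[v])$ can be chosen independently, the second case says precisely that $\sigma\in\Delta^{N_G(v)}*\Delta_{d-1}^t(G-N_G[v])$. Together these give the general identity
$$\mathrm{del}_{\Delta_d^t(G)}(v)=\Delta_d^t(G-v)\cup\left(\Delta^{N_G(v)}*\Delta_{d-1}^t(G-N_G[v])\right).$$
This is item (c) verbatim.

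For item (a), assume $\alpha(G-v)\leq d-1$. Then $\Delta_d^t(G-v)$ is void, so the identity reduces $\mathrm{del}(v)$ to the join. It remains to see that no simplex of $\Delta_d^t(G)$ contains $v$. If $v\in\sigma$, then $\sigma^c\subseteq V(G-v)$, so $\alpha(G[\sigma^c])\leq d-1$ and $\sigma\notin\Delta_d^t(G)$. Hence $\mathrm{del}(v)=\Delta_d^t(G)$.

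For item (b), assume $\alpha(G-N_G[v])\leq d-2$. Then $\Delta_{d-1}^t(G-N_G[v])$ is void, so the join is void and $\mathrm{del}(v)=\Delta_d^t(G-v)$. For the cone statement, take an arbitrary $\sigma$. No independent $d$-set of $G$ contains $v$, so $\alpha(G[\sigma^c])=\alpha(G[\sigma^c-v])$. Therefore $\sigma\in\Delta_d^t(G)$ iff $\sigma-v\in\Delta_d^t(G-v)$, which is exactly $\Delta_d^t(G)=\{\{v\}\}*\Delta_d^t(G-v)$.

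Two conventions need a sentence of care, and this bookkeeping is the only real obstacle.
\begin{itemize}
\item When $d-1=1$, $\Delta_1^t(H)$ consists of the subsets whose complement is nonempty. When $G-N_G[v]$ is the empty graph, $\Delta_{d-1}^t$ of it must be read as void. This matches the convention that a join with the void complex is void, whereas a join with $\{\emptyset\}$ is the other factor.
\item Vertices may be phantom: the vertex sets of the joined complexes should be taken as $N_G(v)$ and $V(G)-N_G[v]$, so that both sides of each equation live on $V(G)-v$.
\end{itemize}
Once these are fixed, the argument above is a direct double inclusion.
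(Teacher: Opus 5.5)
Your proof is correct. It rests on the same idea as the paper's: split an independent $d$-set witnessing a simplex according to whether it contains $v$. The organization differs, though.

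The paper proves the three items separately and argues through facets:
\begin{itemize}
\item In (a), every facet of $\Delta_d^t(G)$ is the complement of an independent $d$-set, which must contain $v$. So the facet contains $N_G(v)$.
\item In (c), it takes a facet $\tau$ of $\mathrm{del}(v)$. Either $\tau$ is a facet of $\Delta_d^t(G)$, hence lies in the join. Or every independent $d$-set $S\subseteq V(G)-\tau$ avoids $v$, which forces $\tau=V(G)-(S\cup\{v\})\in\Delta_d^t(G-v)$.
\end{itemize}

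You instead characterize every simplex avoiding $v$ at once, and this gives slightly more than the lemma states:
\begin{itemize}
\item The identity in (c) holds with no hypotheses at all.
\item The hypotheses of (c) are exactly the conditions that both pieces, $\Delta_d^t(G-v)$ and $\Delta^{N_G(v)}*\Delta_{d-1}^t(G-N_G[v])$, are non-void. That is what Proposition \ref{homosusp} actually needs.
\item Items (a) and (b) become the cases where one piece is void. You then add only that $v$ is a phantom vertex in (a) and a cone point in (b).
\end{itemize}
This also avoids the facet bookkeeping. The paper's route, in exchange, describes the facets of $\mathrm{del}(v)$ explicitly.

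One small correction in (b): the equality $\alpha(G[\sigma^c])=\alpha(G[\sigma^c-v])$ is false in general. For instance, $\sigma^c=\{v\}$ gives $1\neq 0$, because independent sets of size less than $d$ may still use $v$. What is true, and all you use, is that $\alpha(G[\sigma^c])\geq d$ if and only if $\alpha(G[\sigma^c-v])\geq d$. This follows from your remark that no independent $d$-set contains $v$.

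Also, $\{\{v\}\}*\Delta_d^t(G-v)$ has to be read as the cone $\Delta^{\{v\}}*\Delta_d^t(G-v)$, as you do. With the literal definition of the join it would omit the simplices not containing $v$.
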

\begin{proof}
\textit{(a)} If $\alpha(g-v)\leq d-1$, then all independent sets of cardinality $d$ must contain $v$. Therefore $v$ is a phantom vertex of the complex and 
$\mathrm{del}(v)=\Delta_d^t(G)$. Take $\sigma$ a facet of $\Delta_d^t(G)$, then $N_G(v)$ is contained in $\sigma$ and 
$\sigma'=\sigma-N_G(v)$ is a simplex of $\Delta_{d-1}(G-N_G[v])$. Thus $\mathrm{del}(v)$ is 
contained in $\Delta^{N_G(v)}*\Delta_{d-1}(G-N_G[v])$. The other inclusion is clear.

\textit{(b)} If  $\alpha(G-N_G[v])\leq d-2$, then there is no independent set of cardinality $d$ that contains $v$. 
Then for any simplex $\sigma$ in $\Delta_d^t(G-v)$, we have that $\sigma\cup\{u\}$ is a simplex of $\Delta_d^t(G)$. 
Also, for any $\tau$ facet of $\Delta_d^t(G)$ we have that $v$ is in $\tau$ and $V(G-v)-(\tau-\{v\})$ is and independent set of cardinality $d$.
Therefore $\Delta_d^t(G)=\{\{v\}\}*\Delta_d^t(G-v)$ and $\mathrm{del}(v)=\Delta_d^t(G-v)$.

\textit{(c)} Lastly assume that $\alpha(G-v)\geq d$ and $\alpha(G-N_G[v])\geq d-1$.
It is clear that 
$$\mathrm{del}_{\Delta_d^t(G)}(v)\supseteq\Delta_d^t(G-v)\cup\left(\Delta^{N_G(v)}*\Delta_{d-1}^t(G-N_G[v])\right).$$ 
For the other inclusion, let $\tau$ be a facet of $\mathrm{del}_{\Delta_d^t(G)}(v)$. 
If $\tau$ is a facet of $\Delta_d^t(G)$, then $V(G)-\tau$ is an independent set of 
size $d$ which contains $v$ and $(V(G)-\tau)\cap N_G(v)=\emptyset$. Thus $N_G(v)$ is contained in $\tau$.
We take $\tau'=\tau-N_G(v)$. 
Then $(V(G)-\{v\})-\tau'$ is an independent set of $G-N[v]$ and it has cardinality $d-1$. Therefore
$\tau$ is in $\Delta^{N_G(v)}*\Delta_{d-1}^t(G-N_G[v])$. 

Assume $\tau$ is not a facet of $\Delta_d^t(G)$, then 
$|V(G)-\tau|\geq d+1$ and there is an independent set $S$ of size $d$ in $V(G)-\tau$. Now, $v$ can not be in $S$. If $v$ is 
in $S$, then $\sigma=V(G)-S$ is a simplex of $\mathrm{del}_{\Delta_d^t(G)}(v)$ and $\tau$ is a subset of $\sigma$, but this can not happen. 
From this, $V(G)-S\cup\{v\}$ is a facet of $\mathrm{del}_{\Delta_d^t(G)}(v)$ and therefore $V(G)-S\cup\{v\}=\tau$. From all this, we have that 
$\tau$ is a simplex of $\Delta_d^t(G-v)$.
\end{proof}

\begin{prop}\label{homosusp}
Let $G$ be a graph such that %$\delta(G)\geq1$, 
$\alpha(G)\geq d\geq2$ and let $v$ be a vertex of $G$. If $\alpha(G-v)\geq d$ and  $\alpha(G-N_G[v])\geq d-1$, then
$$\Delta_d^t(G)\simeq\sum\left(\Delta_d^t(G-v)\cap\left(\Delta^{N_G(v)}*\Delta_{d-1}^t(G-N_G[v])\right)\right)$$
\end{prop}
\begin{proof}
By Lemma \ref{delcom} 
$$\mathrm{del}_{\Delta_d^t(G)}(v)=\Delta_d^t(G-v)\cup\left(\Delta^{N_G(v)}*\Delta_{d-1}^t(G-N_G[v])\right)$$
By Lemma \ref{lemmlnkdelta} $\mathrm{lk}_{\Delta_d^t(G)}(v)=\Delta_d^t(G-v)\neq\emptyset$, thus
$$\Delta_d^t(G)=\left(\{v\}*\Delta_d^t(G-v)\right)\cup\left(\Delta^{N_G(v)}*\Delta_{d-1}^t(G-N_G[v])\right)$$
Therefore $$\Delta_d^t(G)\simeq\hocolim\left( X\longhookleftarrow\Delta_d^t(G-v)\cap\left(\Delta^{N_G(v)}\ast\Delta_{d-1}^t(G-N_G[v])\right)\longhookrightarrow Y\right)$$
where $X=\left(\{v\}\ast\Delta_d^t(G-v)\right)$ and $Y=\left(\Delta^{N_G(v)}\ast\Delta_{d-1}^t(G-N_G[v])\right)$.
Because $X$ and $Y$ are contractible, by Lemma \ref{homocolimpegado} we obtain the result.    
\end{proof}

\section{Connectivity results}
In this section we give some results on the connectivity of the complexes of both filtrations. We begin with some result on the connectivity for pairs of bounden independence complexes 
and a more general result on the connectivity of the bounded independence complex. We show how the girth and the order give us bounds for the connectivity of the total cut complex and for 
the $2$-total cut complex we will see that these two parameters can determine the homotopy type.

\begin{prop}\label{connpair}
For any graph $G$ and $d\geq2$ the pair $(BI_{d+1}(G),BI_d(G))$ is $(d-2)$-connected.
\end{prop}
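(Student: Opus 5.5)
The plan is to show that $BI_{d+1}(G)$ is obtained from $BI_d(G)$ by attaching only cells of dimension at least $d-1$. The connectivity of the pair then follows from the standard cellular fact that a relative CW complex with no cells below a given dimension is highly connected. The only graph-theoretic input is a trivial count on independence numbers.

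First, let $\sigma\subseteq V(G)$ with $|\sigma|\leq d-1$. Any independent set of $G[\sigma]$ has at most $|\sigma|\leq d-1$ vertices, so $\alpha(G[\sigma])\leq d-1<d$ and $\sigma\in BI_d(G)$. Hence $\mathrm{sk}_{d-2}\Delta^{V(G)}\subseteq BI_d(G)$. Since $BI_d(G)\subseteq BI_{d+1}(G)\subseteq\Delta^{V(G)}$, this gives
$$\mathrm{sk}_{d-2}BI_{d+1}(G)=\mathrm{sk}_{d-2}\Delta^{V(G)}=\mathrm{sk}_{d-2}BI_d(G).$$
In other words, every simplex of $BI_{d+1}(G)$ not already in $BI_d(G)$ has at least $d$ vertices, so it has dimension at least $d-1$.

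Next, regard $(BI_{d+1}(G),BI_d(G))$ as a relative CW pair, built from $BI_d(G)$ by adding the simplices of $BI_{d+1}(G)$ not in $BI_d(G)$ as cells in order of increasing dimension. By the previous step all these relative cells have dimension $\geq d-1$. I would then invoke the standard consequence of cellular approximation \citep[Corollary 4.12]{hatcher}: if all cells of $X-A$ have dimension $>n$, then $(X,A)$ is $n$-connected. Taking $n=d-2$ shows the pair is $(d-2)$-connected. Concretely, any map $(D^i,\mathbb{S}^{i-1})\to(BI_{d+1}(G),BI_d(G))$ with $i\leq d-2$ can be homotoped rel $\mathbb{S}^{i-1}$ into the $i$-skeleton, and that skeleton lies in $BI_d(G)$.

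For $d=2$ the claim is that the pair is $0$-connected. This holds because $BI_2(G)$ contains every vertex, so each path component of $BI_3(G)$ meets $BI_2(G)$. This is consistent with the general argument.

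I do not expect a real obstacle. The only point needing care is matching the convention for "$n$-connected pair" in the cited corollary, i.e. that cells of dimension $\geq n+1$ give $n$-connectedness, with the index shift between cell dimension and $d$. Here $n+1=d-1$, and this works out.
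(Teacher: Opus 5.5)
Your proposal is correct and matches the paper's proof. The paper likewise observes that every simplex of $BI_{d+1}(G)-BI_d(G)$ has dimension at least $d-1$ and concludes by Corollary 4.12 of Hatcher; your explicit skeleton equality and the separate $d=2$ check only spell out what the paper leaves implicit.
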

\begin{proof}
Notice that in $BI_{d+1}(G)-BI_d(G)$ the remaining simplices have dimension at least $d-1$, 
thus $(BI_{d+1}(G),BI_{d}(G))$ is $(d-2)$-connected (see \citep[Corollary 4.12]{hatcher}).
\end{proof}
For a graph $G$ and $r\geq1$, we have that $BI_d(G^r)\subseteq BI_d(G^{r+1})$, so is natural to ask if we can say something about the connectivity of the pair.
\begin{prop}
Let $G$ be a connected graph. Then $(BI_d(G^{r+1}),BI_d(G^r))$ is $(d-1)$-connected for $d\geq3$ and $r\geq1$.
\end{prop}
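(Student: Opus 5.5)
The plan is to refine the cell-counting argument of Proposition \ref{connpair}. A direct count only gives $(d-2)$-connectivity, because $BI_d(G^{r+1})-BI_d(G^r)$ can contain $(d-1)$-simplices. So I will first fill each such simplex by a $d$-simplex whose other facets already lie in $BI_d(G^r)$, and then count cells.

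\emph{Step 1: the new $(d-1)$-simplices.} Let $\sigma\in BI_d(G^{r+1})-BI_d(G^r)$. Then $\alpha(G^r[\sigma])\geq d$, so $|\sigma|\geq d$, and the only simplices of dimension below $d$ in this difference have $|\sigma|=d$. For such a $\sigma$:
\begin{itemize}
\item $\sigma$ is independent in $G^r$;
\item $\sigma$ is not independent in $G^{r+1}$, so there are $u,w\in\sigma$ with $d_G(u,w)=r+1$ (finite since $G$ is connected).
\end{itemize}
Choose a geodesic $uw$-path in $G$ and let $x_\sigma$ be the vertex on it adjacent to $u$. Then $d_G(x_\sigma,u)=1\leq r$ and $d_G(x_\sigma,w)=r$, so $x_\sigma$ is adjacent to both $u$ and $w$ in $G^r$. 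Since $\sigma$ is independent in $G^r$, we get $x_\sigma\notin\sigma$.

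\emph{Step 2: the filling simplex.} Put $\tau_\sigma=\sigma\cup\{x_\sigma\}$, of size $d+1$. I check two facts.
\begin{itemize}
\item $\tau_\sigma\in BI_d(G^{r+1})$. A $d$-subset of $\tau_\sigma$ omitting $x_\sigma$ is $\sigma$, which is not independent in $G^{r+1}$. A $d$-subset containing $x_\sigma$ omits only one vertex of $\sigma$, so it contains $u$ or $w$, both adjacent to $x_\sigma$.
\item Every facet of $\tau_\sigma$ other than $\sigma$ lies in $BI_d(G^r)$. Such a facet is $x_\sigma$ together with $\sigma$ minus one vertex, so it contains $u$ or $w$ and is not independent in $G^r$.
\end{itemize}
Consequently $\sigma$ is the unique facet of $\tau_\sigma$ outside $BI_d(G^r)$. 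Hence $\tau_\sigma$ determines $\sigma$, and distinct $\sigma$ give distinct $\tau_\sigma$.

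\emph{Step 3: collapses.} Let $K''=BI_d(G^r)\cup\bigcup_\sigma\Delta^{\tau_\sigma}$, with $\sigma$ running over the $d$-element simplices of $BI_d(G^{r+1})-BI_d(G^r)$. Then $K''\subseteq BI_d(G^{r+1})$, and the simplices of $K''$ outside $BI_d(G^r)$ are exactly the pairs $\sigma\subset\tau_\sigma$. Each $\sigma$ is a free face of $\tau_\sigma$ in $K''$:
\begin{itemize}
\item $\tau_\sigma$ is maximal in $K''$, since it is not contained in $BI_d(G^r)$ (it contains $\sigma$) nor in any other $\tau_{\sigma'}$ (by size and Step 2);
\item $\sigma$ lies in no other simplex of $K''$, because any $\tau_{\sigma'}\supset\sigma$ would have $\sigma$ as a facet outside $BI_d(G^r)$, forcing $\sigma'=\sigma$.
\end{itemize}
So the elementary collapses $(\sigma,\tau_\sigma)$ are performed simultaneously and $K''\searrow BI_d(G^r)$. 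In particular the inclusion $BI_d(G^r)\hookrightarrow K''$ is a homotopy equivalence, and $(K'',BI_d(G^r))$ is $n$-connected for every $n$.

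\emph{Step 4: conclusion.} By construction every simplex of $BI_d(G^{r+1})-K''$ has at least $d+1$ vertices, i.e.\ dimension at least $d$. By \citep[Corollary 4.12]{hatcher}, as in Proposition \ref{connpair}, the pair $(BI_d(G^{r+1}),K'')$ is $(d-1)$-connected. For the triple $BI_d(G^r)\subseteq K''\subseteq BI_d(G^{r+1})$, the long exact sequence of relative homotopy groups (or cellular compression: any map of $(D^k,S^{k-1})$ with $k\leq d-1$ deforms into $K''$ rel boundary, then into $BI_d(G^r)$ via the collapse) shows that $(BI_d(G^{r+1}),BI_d(G^r))$ is $(d-1)$-connected.

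The main obstacle is Step 3: checking that the chosen $d$-simplices are pairwise distinct and that each $\sigma$ is genuinely free. This is settled by the observation in Step 2 that $\sigma$ is the only facet of $\tau_\sigma$ outside $BI_d(G^r)$. The hypothesis $d\geq3$ does not seem to be used in this argument; I would recheck low-dimensional edge cases, such as connectivity conventions for the empty complex, in case it is needed there.
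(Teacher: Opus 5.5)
Your proof is correct. The combinatorial core is the same as the paper's, but you reach the conclusion by a different topological route.

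The paper also fills each $d$-element $\sigma\in BI_d(G^{r+1})-BI_d(G^r)$ by adding the geodesic neighbour $w_r$ of one endpoint (your $x_\sigma$). It uses this $d$-simplex only at the chain level. Since the other facets lie in $BI_d(G^r)$, every relative $(d-1)$-chain is a boundary, so $H_{d-1}(BI_d(G^{r+1}),BI_d(G^r))=0$. The paper then combines three ingredients: the $(d-2)$-connectivity coming from cell dimensions, the simple connectivity of $BI_d(G^r)$ (Proposition \ref{propconnbi} applied to the connected graph $G^r$), and the relative Hurewicz theorem. Together these give $\pi_{d-1}$ of the pair equal to zero. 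This is shorter to write, but Hurewicz in that form needs $d-1\geq 2$ and a simply connected subcomplex. That is exactly where the hypotheses $d\geq3$ and ``$G$ connected'' enter.

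You instead turn the chain-level cancellation into a geometric one. By checking that $\sigma$ is the unique facet of $\tau_\sigma$ outside $BI_d(G^r)$, you make the pairs $(\sigma,\tau_\sigma)$ disjoint free-face pairs, so $K''$ collapses onto $BI_d(G^r)$. Then the cell-dimension argument for $(BI_d(G^{r+1}),K'')$, together with compression through the collapse, finishes the proof. This avoids Hurewicz and Proposition \ref{propconnbi} entirely, and it proves slightly more.

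As you suspected, neither $d\geq3$ nor the connectedness of $G$ is actually needed in your argument. It works verbatim for $d=2$. Also, the distance $d_G(u,w)=r+1$ is finite simply because $u,w$ are adjacent in $G^{r+1}$, not because $G$ is connected. There is no edge-case issue with empty complexes, since every vertex is already a simplex of $BI_d(G^r)$.
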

\begin{proof}
Notice that $(BI_d(G^{r+1}),BI_d(G^r))$ is $(d-2)$-connected because in $BI_d(G^{r+1})-BI_d(G^r)$ all the simplices have dimension at least $d-1$ (see \citep[Corollary 4.12]{hatcher}). 
Now, let $\sigma$ be a $(d-1)$-dimensional simplex in 
$BI_d(G^{r+1})-BI_d(G^r)$. Then $\sigma$ is an independent set in $G^r$ but not in $G^{r+1}$, thus  
any two vertices in $\sigma$ are at distance at least $r+1$ in $G$ and there must be a pair at this exact distance. 
Let $u,v$ in $\sigma$ be such that $d_G(u,v)=r+1$ and take  $uw_1\cdots w_rv$ a $uv$-path of minimal length in $G$. Suppose that 
$\sigma=\{u,v,x_1,\dots,x_{d-2}\}$. Then $\tau=\{w_r,u,v,x_1,\dots,x_{d-2}\}$ is a simplex of $BI_d(G^{r+1})$. In the chain complex for the pair we have that
$$\partial\left([w_r,u,v,x_1,\dots,x_{d-2}]\right)=[u,v,x_1,\dots,x_{d-2}]$$
Thus $H_{d-1}(BI_d(G^{r+1}),BI_d(G^r))\cong0$. By Proposition \ref{propconnbi}, $BI_d(G^r)$ is simply connected and 
by the Relative Hurewicz Theorem \citep[see Theorem 4.32]{hatcher} we have that the pair $(BI_d(G^{r+1}),BI_d(G^r))$ is $(d-1)$-connected.
\end{proof}

\begin{prop}\label{propconnbi}
Let $G$ be a graph, then $\mathrm{conn}(BI_d(G))\geq d-3$. If $G$ is connected, then $BI_3(G)$ is  simply connected.
\end{prop}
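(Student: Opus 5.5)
The first claim follows from a skeleton argument. Every subset $\sigma\subseteq V(G)$ with $|\sigma|\leq d-1$ satisfies $\alpha(G[\sigma])\leq|\sigma|<d$, so $\sigma\in BI_d(G)$. Hence
$$\mathrm{sk}_{d-2}BI_d(G)=\mathrm{sk}_{d-2}\Delta^{V(G)},$$
and by the first of the known facts listed in the preliminaries, $BI_d(G)$ is $(d-3)$-connected. This argument also covers $d=2$, where it says only that the clique complex is nonempty, i.e. $(-1)$-connected. So $\mathrm{conn}(BI_d(G))\geq d-3$.

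For the second claim, the first part already gives that $BI_3(G)$ is connected, since its $1$-skeleton is the complete graph on $V(G)$. It remains to show $\pi_1(BI_3(G))=0$. I would work with the edge-path group with respect to a base vertex. Every closed edge path is a product of edge loops, and each triangle $\{a,b,c\}$ present in $BI_3(G)$ lets us replace the edge $ab$ by the path $acb$. It therefore suffices to show that for any three distinct vertices $a,b,c$, the loop $a\to b\to c\to a$ is null-homotopic, since $\pi_1$ of the complete graph on $V(G)$ is generated by such triangular loops with a fixed base vertex.

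If $\{a,b,c\}$ is not independent in $G$, then $\alpha(G[\{a,b,c\}])\leq2$, so it is a $2$-simplex of $BI_3(G)$ and the loop bounds it. So assume $\{a,b,c\}$ is independent. Since $G$ is connected, take a shortest path in $G$ from $a$ to $b$, and let $w$ be the neighbour of $a$ on this path; then $w\neq c$ because $c$ is not adjacent to $a$. Each of the triples $\{w,a,b\}$, $\{w,b,c\}$ and $\{w,c,a\}$ contains the edge $wa$ or is handled below:
\begin{itemize}
\item $\{w,a,b\}$ contains the edge $wa$, so it is a simplex;
\item $\{w,c,a\}$ contains the edge $wa$, so it is a simplex;
\item $\{w,b,c\}$ is the troublesome triple.
\end{itemize}
The loop $abc$ is homotopic to the product of the loops around these three triangles (coning from $w$), so it suffices that $\{w,b,c\}$ also be a simplex, or else to reduce it further.

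This triple is the main obstacle. I would handle it by induction on $d_G(a,b)+d_G(b,c)+d_G(c,a)$, or more simply on the distance from $a$ to $\{b,c\}$. If $w$ is adjacent to $b$ or to $c$, then $\{w,b,c\}$ is a simplex and we are done. Otherwise $\{w,b,c\}$ is independent and $d_G(w,b)<d_G(a,b)$. Choosing at the start $a,b$ with $d_G(a,b)$ minimal among the pairs in the triple, the induction on $\min$ pairwise distance, or on the total sum, decreases strictly when $a$ is replaced by $w$. The base case is when some pair is at distance $1$, which contradicts independence and so is vacuous. Thus every triangular loop is null-homotopic and $BI_3(G)$ is simply connected.
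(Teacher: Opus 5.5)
Your proof is correct. The first claim matches the paper's skeleton argument, but for the second claim you take a different route. The paper inducts on $|V(G)|$. It removes a vertex $v$ with $G-v$ connected and writes $BI_3(G)=\mathrm{del}(v)\cup\mathrm{st}(v)$, where $\mathrm{del}(v)=BI_3(G-v)$ is simply connected by induction. The intersection $\mathrm{lk}(v)$ is connected because it contains every vertex of $G-v$ and the connected clique complex $BI_2(G-v)$, and Seifert--van Kampen finishes.

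You instead work directly in the edge-path group. Since the $1$-skeleton is complete, $\pi_1$ is generated by triangle loops. Your shortest-path descent then writes the loop of an independent triple $\{a,b,c\}$ as a product of conjugates of the boundaries of the $2$-simplices $\{w,a,b\}$ and $\{w,a,c\}$ and of the loop of $\{w,b,c\}$, which is strictly closer. Your version needs no base cases or non-cut vertex and gives an explicit null-homotopy. The paper's is shorter and fits the deletion/link framework used throughout.

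One slip: $d_G(a,b)+d_G(b,c)+d_G(c,a)$ need not decrease, because $d_G(w,c)$ can equal $d_G(a,c)+1$. For example, on the path $c\,x_1\,x_2\,a\,w\,y\,b$ the sum is $12$ both before and after replacing $a$ by $w$. Only the minimum pairwise distance, with $a,b$ chosen to realize it, works as the induction measure, so drop the sum alternative.
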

\begin{proof}
From its definition we have that $\mathrm{sk}_{d-2}BI_d(G)=\mathrm{sk}_{d-2}\Delta^{V(G)}$. 
Therefore $\mathrm{conn}\left(BI_d(G)\right)\geq d-3$. 

If $G$ is connected and has at most $3$ vertices, then $BI_3(G)\cong\Delta^{V(G)}\simeq*$. The only connected graph of order $4$ with 
independence number $3$ is $K_{1,3}$ and 
$$BI_3(K_{1,3})\cong\Delta^3-\{[3],\underline{3}\}\simeq*.$$
Assume the result it is true for any connected graph of order $n$ and take $G$ a connected graph of order $n+1$. Now, $G$ has a vertex $v$ such 
that $G-v$ is connected, thus 
$BI_3(G-v)=\mathrm{del}_{BI_3(G)}(v)$ is $1$-connected. We know that $BI_3(G)=\mathrm{del}_{BI_3(G)}(v)\cup\mathrm{st}_{BI_3(G)}(v)$. Notice that 
$BI_2(G-v)$ is a subcomplex of $\mathrm{lk}_{BI_3(G)}(v)$ and that $BI_2(G-v)$ is connected because $G-v$ is connected. By the Seifert–Van-Kampen Theorem $BI_3(G)$ is simply connected.
\end{proof}

The bound in last Proposition can not be improve. We will see that if $G$ is the disjoint union of $d$ or more paths, then $BI_d(G)$ has the homotopy type of a wedge of $(d-2)$-dimensional spheres.

\begin{theorem}\label{theoconngirth}
Let $G$ be a graph of order $n\geq2d+k$ such that $g(G)\geq2d$, then $\Delta_d^t(G)$ is $(k-1)$-connected. 
\end{theorem}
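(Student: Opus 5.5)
The plan is to show that $\Delta_d^t(G)$ contains the full $k$-skeleton of the simplex on $V(G)$, that is, $\mathrm{sk}_k\Delta_d^t(G)=\mathrm{sk}_k\Delta^{V(G)}$. The preliminaries record that a complex whose $k$-skeleton is that of the full simplex on its vertex set is $(k-1)$-connected, so this gives the theorem at once. The whole argument therefore reduces to a purely graph-theoretic claim about independent sets in graphs of large girth.

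\textbf{Step 1: reduction to a graph statement.} Take any $\sigma\subseteq V(G)$ with $|\sigma|\leq k+1$. By definition, $\sigma\in\Delta_d^t(G)$ if and only if $\alpha(G[\sigma^c])\geq d$. Since $|\sigma^c|\geq n-(k+1)\geq 2d-1$, it suffices to prove the following claim: every induced subgraph $H$ of $G$ with at least $2d-1$ vertices satisfies $\alpha(H)\geq d$. Applying the claim with $H=G[\sigma^c]$ and $\sigma$ ranging over all such sets also shows that every vertex of $V(G)$ is a genuine vertex of $\Delta_d^t(G)$, so no phantom vertices occur.

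\textbf{Step 2: the key observation.} Choose any set $W$ of exactly $2d-1$ vertices of $H$, and consider $F=G[W]$.
\begin{itemize}
    \item Any cycle of $F$ is a cycle of $G$, so it has length at least $g(G)\geq 2d$.
    \item On the other hand, a cycle in $F$ has at most $|W|=2d-1$ vertices.
\end{itemize}
Hence $F$ has no cycles at all, so $F$ is a forest. A forest is bipartite, and one of its two colour classes has at least $\lceil(2d-1)/2\rceil=d$ vertices. That class is independent in $F$, hence independent in $H$ and in $G$. Therefore $\alpha(H)\geq\alpha(F)\geq d$, which proves the claim.

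\textbf{Step 3: conclusion.} Every subset of $V(G)$ with at most $k+1$ elements is a simplex of $\Delta_d^t(G)$, so $\mathrm{sk}_k\Delta_d^t(G)=\mathrm{sk}_k\Delta^{V(G)}$. By the fact quoted in the preliminaries, $\Delta_d^t(G)$ is $(k-1)$-connected.

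I do not expect a serious obstacle here. The one point that needs care is the counting in Step 2. The girth condition must rule out every cycle on at most $2d-1$ vertices, which is exactly why the hypothesis is $g(G)\geq 2d$ rather than anything weaker. Likewise, the order bound $n\geq 2d+k$ is exactly what guarantees that complements of $k$-simplices have at least $2d-1$ vertices.
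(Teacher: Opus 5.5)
Your proof is correct and follows the paper's route: both show $\mathrm{sk}_k\Delta_d^t(G)=\mathrm{sk}_k\Delta^{V(G)}$ by checking that the complement of every $(k+1)$-set has independence number at least $d$. The only difference is in that check. The paper splits into two cases, according to whether $G[V(G)-\sigma]$ is a forest or contains a cycle of length at least $g(G)\geq 2d$. You instead restrict to exactly $2d-1$ vertices, so only the forest case can occur, which also avoids having to note that a shortest such cycle is induced.
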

\begin{proof}
We take $\sigma$ in $\binom{V(G)}{k+1}$. If $G[V(G)-\sigma]$ is a forest, then 
$$\alpha(G[V(G)-\sigma])\geq\frac{2d-1}{2}$$
and $\sigma$ is a simplex of $\Delta_d^t(G)$. 
If $G[V(G)-\sigma]$ has a cycle, then 
$$\alpha(G[V(G)-\sigma])\geq\left\lfloor\frac{g(G)}{2}\right\rfloor\geq d$$
and $\sigma$ is a simplex of $\Delta_d^t(G)$.
Therefore $\mathrm{sk}_k\Delta_d^t(G)\cong\mathrm{sk}_k\Delta^{V(G)}$ and $\Delta_d^t(G)$ is $(k-1)$-connected.
\end{proof}

\begin{cor}
Let $G$ be a graph of order $n\geq2d+k$ such that $g(G)\geq2d$, then $H_q(BI_d(G))\cong0$ for any $q>2d-3$.
\end{cor}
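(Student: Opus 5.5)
The corollary should follow from Theorem \ref{theoconngirth} by Alexander duality. The relation we use is $\Delta_d^t(G)=BI_d(G)^*$, with both complexes on the vertex set $V(G)$ of size $n$. Theorem \ref{dualidadalexander} then gives, for every $q$,
$$\tilde{H}_q(BI_d(G))\cong\tilde{H}^{n-q-3}(\Delta_d^t(G)).$$
So the job is to show that the cohomology on the right vanishes whenever $q>2d-3$.

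The first step is to get connectivity of the dual from the girth. Since $n\geq 2d+k$, set $k_0=n-2d\geq k$. The order hypothesis $n\geq 2d+k_0$ then holds with equality, and $g(G)\geq2d$ is unchanged. Theorem \ref{theoconngirth} applied with $k_0$ shows that $\Delta_d^t(G)$ is $(n-2d-1)$-connected.

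Next we convert connectivity into vanishing cohomology. By the Hurewicz theorem, $\tilde{H}_j(\Delta_d^t(G))=0$ for all $j\leq n-2d-1$. The universal coefficient theorem expresses $\tilde{H}^j$ in terms of $\tilde{H}_j$ and $\tilde{H}_{j-1}$, so $\tilde{H}^j(\Delta_d^t(G))=0$ for all $j\leq n-2d-1$ as well. If $q>2d-3$, then $q\geq 2d-2$ and so $n-q-3\leq n-2d-1$. The group $\tilde{H}^{n-q-3}(\Delta_d^t(G))$ therefore vanishes, and hence $\tilde{H}_q(BI_d(G))=0$.

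Reduced and unreduced homology agree for $q\geq1$. The case $q=0$ is not in range, since $2d-3\geq1$ forces $q\geq2$. The statement for $H_q$ follows.

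The only delicate point is the bookkeeping of degenerate cases. One is when $\Delta_d^t(G)$ is void or $\{\emptyset\}$, so that Alexander duality is applied correctly. But $n\geq 2d$ together with $g(G)\geq 2d$ already gives $\alpha(G)\geq d$, as in the proof of Theorem \ref{theoconngirth} with $\sigma=\emptyset$, so $\Delta_d^t(G)$ is nonempty. The other is the negative range of $n-q-3$, where the cohomology groups vanish trivially.
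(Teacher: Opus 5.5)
Your proof is correct and follows the paper's argument: Theorem \ref{theoconngirth} with $k=n-2d$, then Hurewicz and universal coefficients to get $\tilde{H}^{j}(\Delta_d^t(G))=0$ for $j\leq n-2d-1$, then Alexander duality. Your uniform treatment of $n=2d$ is cleaner than the paper's separate case $G\cong C_{2d}$, which overlooks acyclic $G$ on $2d$ vertices. One small correction: $\alpha(G)\geq d$ by itself only shows $\Delta_d^t(G)$ is non-void; what rules out $\Delta_d^t(G)=\{\emptyset\}$, and hence gives $\tilde{H}^{-1}(\Delta_d^t(G))=0$ when $q=n-2$, is that every vertex is a face (the $k=0$ case of Theorem \ref{theoconngirth}).
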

\begin{proof}
If $n=2d$, then $G\cong C_{2d}$ and $BI_d(C_{2d})\cong\mathbb{S}^{2d-3}$ because the only minimal non-simplexes are the sets $U=\{1,\dots,2d-1\}$ and $W=\{2,\dots,2d\}$, thus 
$$BI_d(C_{2d})=\partial\Delta^{U}*\partial\Delta^{W}.$$
Assume $n>2d$.
By Theorem \ref{theoconngirth} and Hurewicz Theorem the first possibly non-trivial reduced homology group of $\Delta_d^t(G)$ is $\tilde{H}_k(\Delta_d^t(G))$. 
Let $T_q$ be the torsion subgroup of $H_q(\Delta_d^t(G))$. Then 
$$H^q(\Delta_d^t(G))\cong(H_q(\Delta_d^t(G))/T_q)\bigoplus T_{q-1}$$
\citep[see Corollary 3.3]{hatcher} 
and the smallest $q$ for which $\Delta_d^t(G)$ may have non-trivial cohomoloy is $k$. By Alexander duality we have that $H_{q}(BI_d(G))\cong0$ for $q>2d-3$.
\end{proof}

\begin{cor}
Let $G$ be a graph of order $n\geq6$ such that $g(G)\geq4$, then $\Delta_2^t(G)$ is contractible or it has the homotopy type of a wedge of spheres with $n-3, n-4$ as the only possible dimensions.
\end{cor}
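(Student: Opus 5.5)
Take $d=2$ and $k=n-4\geq2$ in Theorem \ref{theoconngirth}. Since $n=2d+k$ and $g(G)\geq4=2d$, the complex $\Delta_2^t(G)$ is $(n-5)$-connected. As $n\geq6$, this makes it at least $1$-connected, so it is simply connected. This lets us apply Theorems \ref{homttyphomlgrp} and \ref{gradconse} once its homology is known.

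Next we bound the homology from above using Alexander duality and the clique complex $BI_2(G)$. Because $g(G)\geq4$, $G$ has no triangles. Hence $BI_2(G)$ is just the graph $G$ regarded as a $1$-dimensional complex (vertices and edges), and $\tilde{H}_q(BI_2(G))=0$ for $q\geq2$. Moreover $\tilde{H}_0(BI_2(G))$ and $\tilde{H}_1(BI_2(G))$ are free, and $H^*(BI_2(G))$ is torsion-free, since a graph has free homology. Duality gives
$$\tilde{H}_q(\Delta_2^t(G))\cong\tilde{H}^{n-q-3}(BI_2(G)).$$
This can be nonzero only when $n-q-3\in\{0,1\}$, that is, for $q=n-3$ or $q=n-4$. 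Both groups are free abelian of finite rank, because the cohomology of a graph is free: $\tilde{H}^0$ and $H^1=\mathrm{Hom}(H_1,\mathbb{Z})$ carry no Ext term since $H_0$ is free. Concretely, the rank in degree $n-3$ is $c-1$, where $c$ is the number of components, and the rank in degree $n-4$ is the cycle rank $|E|-n+c$.

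Now conclude with the cases. If both groups vanish, $\Delta_2^t(G)$ is simply connected with trivial reduced homology, so it is contractible by Whitehead's theorem (Theorem \ref{whiteheadhomologia}). If exactly one is nonzero, Theorem \ref{homttyphomlgrp} gives a wedge of spheres of that single dimension. If both are nonzero, apply Theorem \ref{gradconse} with $q=n-4\geq2$ to get $\bigvee\mathbb{S}^{n-4}\vee\bigvee\mathbb{S}^{n-3}$.

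The only delicate point is the hypothesis $q\geq2$ in Theorem \ref{gradconse}, together with simple connectivity; both are secured by $n\geq6$, which is exactly why that bound appears. A minor check is that Theorem \ref{theoconngirth} applies with $k=n-4$, which just needs $n\geq 2d+k$, true with equality.
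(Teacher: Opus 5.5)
Your proof is correct and follows the same route as the paper. Triangle-freeness makes $BI_2(G)$ a $1$-dimensional complex whose only nonzero reduced cohomology is free and sits in degrees $0,1$, and Alexander duality moves this to degrees $n-3,n-4$. Theorem~\ref{theoconngirth} then gives simple connectivity, and Theorem~\ref{gradconse} finishes the argument; your explicit case split using Whitehead's theorem and Theorem~\ref{homttyphomlgrp} when one or both groups vanish simply spells out what the paper leaves implicit in its single appeal to Theorem~\ref{gradconse}.
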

\begin{proof}
Because $G$ has girth at least $4$, then $\tilde{H}^q(BI_2(G))\cong0$  for any $q\neq0,1$. Therefore 
$\tilde{H}_q(\Delta_2^t(G))\cong0$ for any $q\neq n-3,n-4$ and the only possible non-trivial homology groups are torsion free finitely generated. By Theorem  \ref{theoconngirth}
$\Delta_2^t(G)$ is simply connected. We obtain the result by Theorem \ref{gradconse}.
\end{proof}

\section{Homotopy type calculations}
In this section we calculate the homotopy type of both families of complexes for various families of graphs. In the first subsection we study 
the complexes for some disconnected graphs, these calculations will be needed in the third subsection. In the second subsection we work on 
complete multipartite graphs for the bounded independence complexes. In the third subsection we study cycles and some of their powers. In the last subsection we focus on the clique
complex and the total $2$-cut complex for the cartesian product of paths and for the cartesian product of complete graphs.
\subsection{Disconnected graphs}

Let $G$ be a disconnected graph with connected components $G_1,\dots,G_k$. For $m=d+k-1$ and for each $\Vec{c}=(d_1,\dots,d_k)$ in $C(m,k)$, we take 
$$BI_{\Vec{c}}(G)=\bigast_{j=1}^k BI_{d_j}(G_{j}).$$

\begin{lem}\label{lemdjtunionbid}
Let $G$ be a disconnected graph and let $G_1, . . . , G_k$ be its connected components. Then
$$BI_d\left(G\right)=\bigcup_{\Vec{c}\in C(d+k-1,k)}BI_{\Vec{c}}(G).$$
\end{lem}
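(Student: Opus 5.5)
The plan is to prove the two inclusions directly, working with individual simplices. The key fact is that independence is additive over connected components. For $\sigma\subseteq V(G)$ write $\sigma_j=\sigma\cap V(G_j)$. Since there are no edges between distinct components, a subset of $\sigma$ is independent in $G$ iff each of its pieces in $V(G_j)$ is independent in $G_j$. Hence
$$\alpha(G[\sigma])=\sum_{j=1}^k\alpha(G_j[\sigma_j]).$$
Also, a simplex of the join $BI_{\Vec{c}}(G)$ is exactly a set $\sigma$ with $\sigma_j\in BI_{d_j}(G_j)$ for all $j$, that is, with $\alpha(G_j[\sigma_j])\leq d_j-1$ for each $j$. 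Here the join is taken over the disjoint vertex sets $V(G_j)$.

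For the inclusion $\supseteq$, take $\Vec{c}=(d_1,\dots,d_k)\in C(d+k-1,k)$ and $\sigma\in BI_{\Vec{c}}(G)$. By the formula above,
$$\alpha(G[\sigma])\leq\sum_j(d_j-1)=(d+k-1)-k=d-1<d,$$
so $\sigma\in BI_d(G)$.

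For the inclusion $\subseteq$, take $\sigma\in BI_d(G)$ and set $a_j=\alpha(G_j[\sigma_j])\geq0$. Then $\sum_j a_j\leq d-1$. We must choose positive integers $d_j\geq a_j+1$ with $\sum_j d_j=d+k-1$. Put $d_j=a_j+1$ for $j\geq2$ and $d_1=a_1+1+(d-1-\sum_j a_j)$. Every $d_j$ is at least $1$, every $d_j$ exceeds $a_j$, and the $d_j$ sum to $d+k-1$. So $\Vec{c}\in C(d+k-1,k)$ and $\sigma\in BI_{\Vec{c}}(G)$.

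The only delicate point is a convention. When $a_j=0$, the factor $BI_1(G_j)$ must be read as $\{\emptyset\}$, since $\alpha(G_j[\tau])<1$ forces $\tau=\emptyset$. This is consistent with the join formalism, and it is the only place where the bookkeeping needs care. I will state the convention explicitly in the proof. Otherwise the argument is routine, and I expect no real obstacle.
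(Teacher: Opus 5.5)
Your proof is correct and follows essentially the same route as the paper. Both inclusions are checked simplex by simplex, using additivity of $\alpha$ over components, and the reverse inclusion sets $d_j=\alpha(G_j[\sigma_j])+1$. Your explicit choice of $\Vec{c}$, with the slack placed in $d_1$, makes concrete the paper's remark that any composition in $C(d+k-1,k)$ dominating $(d_1,\dots,d_k)$ works. Your convention $BI_1(G_j)=\{\emptyset\}$ is what the paper uses implicitly when it says $\sigma$ only meets components whose coordinate exceeds $1$.
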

\begin{proof}
For each $\Vec{c}$ in $C(d+k-1,k)$, if $p_i$ is the projection of the $i$-coordinate, we have that $p_i(\Vec{c})\leq d$ for all $i$. 
If $J=\{i_1,\dots,i_n\}$ are the indices such that 
$p_{i_j}(\Vec{c})>1$, then 
$$\sum_{j=1}^np_{i_j}(\Vec{c})=d+k-1-(k-n)=d+n-1.$$
Now take $\sigma$ in $BI_{\Vec{c}}(\underline{k})$, notice that $\sigma$ only has vertices in $G_{i_j}$ for $i_j$ in $J$. If we take $\sigma_{i_j}=V(G_{i_j})\cap\sigma$ and 
$d_{i_j}=p_{i_j}(\Vec{c})$, we have that 
$$\alpha(G[\sigma])=\sum_{j=1}^n\alpha(G_i[\sigma_{i_j}])\leq\sum_{j=1}^n(d_{i_j}-1)=d-1$$
thus $\sigma$ is a simplex of $BI_d\left(G\right)$.

Now we take $\sigma$ a simplex of $BI_d\left(G\right)$. If we take $\sigma_i=V(G_i)\cap\sigma$ and $d_i=\alpha(G[\sigma_i])+1$, then 
$\sigma_i$ is a simplex of $BI_{d_i}(G_i)$ and we have that 
$$\sum_{i=1}^k\alpha(G_i[\sigma_i])\leq d-1.$$
Thus 
$$\sum_{j=1}^kd_i\leq d+k-1$$
and $\sigma$ is a simplex of $BI_{\Vec{c}}(\underline{k})$ for any $\Vec{c}$ in $C(d+k-1,k)$ such that $(d_1,\dots,d_k)\leq\Vec{c}$ in $\mathcal{C}_{d+k-1,k}$.
\end{proof}

\begin{theorem}\label{theodiscnntbid}
For $d\geq3$, let $G_1,\dots,G_k$ be graphs such that $BI_l(G_i)$ is contractible for any $i$ and all $2\leq l\leq d$. For $G=G_1+\cdots+G_k$
\begin{itemize}
    \item[(a)] If $k\leq d-1$, then 
    $BI_d(G)\simeq*.$
    \item[(b)] If $k\geq d$, then 
    $$BI_d(G)\simeq\bigvee_{\binom{k-1}{d-1}}\mathbb{S}^{d-2}.$$
\end{itemize}
\end{theorem}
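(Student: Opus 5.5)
By Lemma \ref{lemdjtunionbid}, $BI_d(G)$ is the union of the subcomplexes $BI_{\Vec{c}}(G)$, where $\Vec{c}$ runs over $C(m,k)$ and $m=d+k-1$. I will turn this union into a homotopy colimit over the poset $\mathcal{C}_{m,k}$ and then apply Corollary \ref{corhocolimcmk} together with Proposition \ref{propcmk}.

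\textbf{Step 1: the diagram.} For every $\Vec{c}=(c_1,\dots,c_k)\in\mathcal{C}_{m,k}$ set
$$\mathcal{X}(\Vec{c})=\bigast_{j=1}^k BI_{c_j}(G_j),$$
with the convention $BI_1(G_j)=\{\emptyset\}$. If $\Vec{c}\leq\Vec{c}'$ then $BI_{c_j}(G_j)\subseteq BI_{c'_j}(G_j)$ for each $j$, so $\mathcal{X}$ is a diagram of subcomplexes of $BI_d(G)$ with inclusions as maps.

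\textbf{Step 2: contractibility of the values.} Since $\Vec{c}\neq(1,\dots,1)$, some coordinate satisfies $c_j\geq2$. Each coordinate is at most $d$, because the other $k-1$ coordinates are at least $1$ and the sum is at most $d+k-1$. By hypothesis $BI_{c_j}(G_j)\simeq*$, and a join with a contractible factor is contractible. So $\mathcal{X}(\Vec{c})\simeq*$ for all $\Vec{c}$.

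\textbf{Step 3: union equals colimit.} I need $\colim_{\mathcal{C}_{m,k}}\mathcal{X}=BI_d(G)$. The key fact is that for $\Vec{c},\Vec{c}'$ in $\mathcal{C}_{m,k}$,
$$\mathcal{X}(\Vec{c})\cap\mathcal{X}(\Vec{c}')=\mathcal{X}(\min(\Vec{c},\Vec{c}')),$$
where the minimum is taken coordinatewise. This holds because the factors live on disjoint vertex sets, so the intersection of joins is the join of the intersections, and the filtration gives $BI_a(G_j)\cap BI_b(G_j)=BI_{\min(a,b)}(G_j)$.

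The minimum can fail to lie in $\mathcal{C}_{m,k}$ only when it equals $(1,\dots,1)$; in that case the intersection is $\{\emptyset\}$, so the two complexes share no vertex. The maximal elements of $\mathcal{C}_{m,k}$ are exactly $C(m,k)$, so they cover $BI_d(G)$ by Lemma \ref{lemdjtunionbid}. Any two of them are glued along a member of the diagram, or along nothing, so the colimit of this diagram of inclusions is the union. Since all maps are inclusions of CW-complexes over a finite poset, the colimit is weakly equivalent to the homotopy colimit, by the Reedy remark in the preliminaries.

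\textbf{Step 4: conclusion.} Corollary \ref{corhocolimcmk} gives $BI_d(G)\simeq\Delta(\mathcal{C}_{m,k})$. Proposition \ref{propcmk} then yields a contractible complex when $k\leq d-1$, and $\bigvee_{\binom{k-1}{d-1}}\mathbb{S}^{d-2}$ when $k\geq d$. Weak equivalence between CW-complexes is homotopy equivalence, which finishes the argument.

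\textbf{Main obstacle.} The delicate step is Step 3: showing that the colimit of the diagram is the union. A colimit over a poset glues along every comparable pair, so I must check that no extra identifications or missing gluings occur. The intersection formula above is what I would use to settle this.
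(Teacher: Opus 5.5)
Your proposal is correct and is essentially the paper's proof. You use the same diagram $\Vec{c}\mapsto BI_{\Vec{c}}(G)$ over $\mathcal{C}_{m,k}$ with contractible values, the same observation that intersections correspond to coordinatewise minima (with $(1,\dots,1)$ giving empty intersections), and then $\colim\simeq\hocolim$ followed by Corollary \ref{corhocolimcmk} and Proposition \ref{propcmk}; your Step 3 spells out what the paper only asserts.

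One refinement concerns replacing the colimit by the homotopy colimit. This is not justified merely by the maps being inclusions: a constant one-point diagram over a poset whose order complex is a circle is a counterexample. It is justified precisely by your intersection formula, which makes every latching map $\bigcup_{\Vec{b}<\Vec{c}}\mathcal{X}(\Vec{b})\hookrightarrow\mathcal{X}(\Vec{c})$ a subcomplex inclusion, so the diagram is cofibrant.
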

\begin{proof}
First notice  that $G_i$ is connected for all $i$, otherwise $BI_2(G_i)$ would not be contractible. By Lemma \ref{lemdjtunionbid}, we have that
$$BI_d(G)=\bigcup_{\Vec{c}\in C(d+k-1,k)}BI_{\Vec{c}}(G).$$
We take $p_i:(\mathbb{Z}_{>0})^m\longrightarrow\mathbb{Z}$ the $i$-projection, where $m=d+k-1$. For distinct $\Vec{c}_1,\dots,\Vec{c}_n$, we define
$t_i=\min\{p_i(\Vec{c}_j)\}_{j=1}^n$ and $c(\Vec{c}_1,\dots,\Vec{c}_n)=(t_1,\dots,t_m)$. Then the intersection
$$BI_{\Vec{c}_1}(G)\cap\cdots\cap BI_{\Vec{c}_n}(G)$$
is a non-empty intersection if and only if $c(\Vec{c}_1,\dots,\Vec{c}_n)\neq(1,\dots,1)$. Thus any non-empty intersection is associated to a unique element of $\mathcal{C}_{m,k}$.
Now, the complexes and the non-empty intersections are contractible. Thus, 
If we define $\mathcal{X}:\mathcal{C}_{m,k}\longrightarrow\mathrm{Top}$ by $\mathcal{X}(\Vec{c})=BI_{\Vec{c}}(\underline{k})$, we have that 
$$BI_d(G)=\colim_{\mathcal{C}_{m,k}}\mathcal{X}\simeq\hocolim_{\mathcal{C}_{m,k}}\mathcal{X}$$
and by Corollary \ref{corhocolimcmk} the complex has the homotopy type desired.
\end{proof}

Last theorem tell us that if $G$ is the disjoint union of at most $d-1$ $p$th 
powers of paths, then $BI_d(G)\simeq*$ for any $p\geq1$. Also notice that in the proof of last theorem we obtain the following 
corollary.

\begin{cor}
Let $G$ be a disconnected graph and let $G_1, . . . , G_k$ be its connected components. Then
$$BI_d(G)\simeq\hocolim_{\mathcal{C}_{m,k}}\mathcal{X}$$
where $m=d+k-1$ and $\mathcal{X}:\mathcal{C}_{m,k}\longrightarrow\mathrm{Top}$ is given by $\mathcal{X}(\Vec{c})=BI_{\Vec{c}}(\underline{k})$.
\end{cor}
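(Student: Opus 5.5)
The plan is to reuse the colimit decomposition from the proof of Theorem \ref{theodiscnntbid}, dropping the contractibility hypothesis. By Lemma \ref{lemdjtunionbid} we have
$$BI_d(G)=\bigcup_{\Vec{c}\in C(m,k)}BI_{\Vec{c}}(G),\qquad m=d+k-1 .$$
First I will extend the assignment $\Vec{c}\mapsto BI_{\Vec{c}}(G)=\bigast_j BI_{d_j}(G_j)$ to all of $\mathcal{C}_{m,k}$ by the same formula. Since $BI_l(G_j)\subseteq BI_{l'}(G_j)$ for $l\leq l'$, we get $BI_{\Vec{a}}(G)\subseteq BI_{\Vec{b}}(G)$ whenever $\Vec{a}\leq\Vec{b}$. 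This gives a functor $\mathcal{X}:\mathcal{C}_{m,k}\to\mathrm{Top}$ whose maps are all inclusions of subcomplexes.

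Next I will check that $\operatorname{colim}_{\mathcal{C}_{m,k}}\mathcal{X}=BI_d(G)$.
\begin{itemize}
\item Every element of $\mathcal{C}_{m,k}$ lies below some element of $C(m,k)$: increase coordinates until the sum reaches $m$. Hence the union of all $\mathcal{X}(\Vec{c})$ equals the union over the maximal elements, which is $BI_d(G)$ by Lemma \ref{lemdjtunionbid}.
\item Joins of complexes on disjoint vertex sets intersect coordinatewise, so
$$BI_{\Vec{c}_1}(G)\cap\cdots\cap BI_{\Vec{c}_n}(G)=BI_{c(\Vec{c}_1,\dots,\Vec{c}_n)}(G),$$
with $c(\cdot)$ the coordinatewise minimum. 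Here we use the convention $BI_1(G_j)=\{\emptyset\}$, for which the join formula still holds.
\item The minimum is an element of $\mathcal{C}_{m,k}$ exactly when it is not $(1,\dots,1)$, which matches the non-empty intersections. For fixed $d$ a nonempty simplex has some $d_j\geq2$.
\end{itemize}
Because this family is closed under meets, the colimit over the poset is the union. Concretely, a simplex lies in the union, and any two poset elements containing it lie above their meet, which also contains it. So the colimit glues exactly along these intersections.

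Finally, the comparison $\hocolim\mathcal{X}\to\colim\mathcal{X}$. All maps are inclusions of CW-subcomplexes and $\mathcal{C}_{m,k}$ is finite, so the diagram is Reedy cofibrant, as recalled in the preliminaries via \citep[8.4.2]{cubicalhomotopy}, and therefore $\colim\simeq\hocolim$. Strictly, Reedy cofibrancy requires the latching maps to be cofibrations: for each $\Vec{c}$, the union of the $\mathcal{X}(\Vec{b})$ with $\Vec{b}<\Vec{c}$ must include into $\mathcal{X}(\Vec{c})$ as a cofibration. This holds because a union of subcomplexes is a subcomplex, and inclusions of subcomplexes are cofibrations.

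The main obstacle is this identification of the poset colimit with the union, which is really the projection-lemma type statement. If the Reedy argument feels too quick, I would fall back on Theorem \ref{theopweakeqv}, applied to the natural projection $\hocolim\mathcal{X}\to BI_d(G)$. Over each simplex-star the fibre is the homotopy colimit over the subposet of $\Vec{c}$ containing that simplex. That subposet has a minimum (the meet), so it is contractible, and the map is a weak equivalence.
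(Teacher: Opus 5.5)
Your proposal is correct and follows the paper's route. The paper reads this corollary off the proof of Theorem \ref{theodiscnntbid}: it uses Lemma \ref{lemdjtunionbid} to write $BI_d(G)$ as the union of the $BI_{\Vec{c}}(G)$, indexes the non-empty intersections by coordinatewise minima in $\mathcal{C}_{m,k}$, and uses that a finite diagram of subcomplex inclusions has $\colim\simeq\hocolim$. Your version is more careful than the paper's blanket appeal to cofibrancy because you isolate the latching condition; just state explicitly that $\colim_{\Vec{b}<\Vec{c}}\mathcal{X}(\Vec{b})$ equals $\bigcup_{\Vec{b}<\Vec{c}}\mathcal{X}(\Vec{b})$, which follows from your meet argument restricted to $\{\Vec{b}<\Vec{c}\}$.
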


\begin{theorem}\label{theodisconntotcut}
Let $G$ be a disconnected graph and let $G_1,\dots,G_k$ be its connected components. If $k\geq d+3$, $\Delta_l^t(G_i)$ is void or contractible for all $2\leq l\leq d$  and $BI_2(G_i)$ is simply connected for 
all $1\leq i\leq k$, then
$$\Delta_d^t(G)\simeq\bigvee_{\binom{k-1}{d-1}}\mathbb{S}^{n-d-1}.$$
\end{theorem}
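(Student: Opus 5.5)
The plan is to reduce the statement to Theorem \ref{theodiscnntbid} through Alexander duality, and then to recover the homotopy type from homology using simple connectivity. Throughout, $n=|V(G)|$.

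\textbf{Step 1: the components have contractible bounded independence complexes.} For each $i$ and each $2\leq l\leq d$ we have $BI_l(G_i)^*=\Delta_l^t(G_i)$, which by hypothesis is void or contractible.

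If $\Delta_l^t(G_i)$ is void, then $\alpha(G_i)<l$. Hence $BI_l(G_i)=\Delta^{V(G_i)}$, which is contractible.

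If $\Delta_l^t(G_i)$ is contractible, Theorem \ref{dualidadalexander} shows that $BI_l(G_i)$ has trivial reduced homology. We then need $BI_l(G_i)$ to be simply connected, so that Theorem \ref{whiteheadhomologia} gives contractibility.
\begin{itemize}
\item For $l=2$ this is exactly the hypothesis on $BI_2(G_i)$.
\item For $l\geq3$ we use that $G_i$ is connected, which follows because $BI_2(G_i)$ is simply connected and in particular connected. Proposition \ref{propconnbi} then says $BI_3(G_i)$ is simply connected.
\item For $l\geq4$, Proposition \ref{propconnbi} gives $\mathrm{conn}(BI_l(G_i))\geq l-3\geq1$.
\end{itemize}
A small subtlety is the degenerate case where $BI_l(G_i)^*$ is the complex $\{\emptyset\}$ rather than void. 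I would check this separately by noting that $\Delta_l^t(G_i)=\{\emptyset\}$ is not contractible, so it is excluded by the hypothesis.

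\textbf{Step 2: compute $BI_d(G)$.} With Step 1 in hand, Theorem \ref{theodiscnntbid}(b) applies, since $k\geq d+3>d$. It gives
$$BI_d(G)\simeq\bigvee_{\binom{k-1}{d-1}}\mathbb{S}^{d-2}.$$
Its only nonzero reduced homology is therefore $\tilde{H}_{d-2}\cong\mathbb{Z}^{\binom{k-1}{d-1}}$, and its cohomology is torsion free.

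\textbf{Step 3: transfer to $\Delta_d^t(G)$.} Alexander duality gives $\tilde{H}_q(\Delta_d^t(G))\cong\tilde{H}^{n-q-3}(BI_d(G))$. This is nonzero only when $n-q-3=d-2$, that is $q=n-d-1$, where it equals $\mathbb{Z}^{\binom{k-1}{d-1}}$.

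\textbf{Step 4: simple connectivity of $\Delta_d^t(G)$.} This is the main obstacle, and it is where I expect $k\geq d+3$ to be used. The goal is to show $\mathrm{sk}_2\Delta_d^t(G)=\mathrm{sk}_2\Delta^{V(G)}$. For this it suffices that removing any $3$ vertices from $G$ leaves an induced subgraph with independence number at least $d$.

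Removing $3$ vertices empties at most $3$ components. At least $k-3\geq d$ components therefore survive, and choosing one vertex from each gives an independent set of size $d$. So every $3$-subset of $V(G)$ is a simplex, $\Delta_d^t(G)$ is $1$-connected, and Theorem \ref{homttyphomlgrp} yields the wedge of $\binom{k-1}{d-1}$ spheres of dimension $n-d-1$.

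I would also double check that $n-d-1\geq2$, so that the homology computed in Step 3 is consistent with simple connectivity. This holds since $n\geq k\geq d+3$.
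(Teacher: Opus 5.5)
Your proposal is correct and follows the paper's proof essentially step for step. You get contractibility of each $BI_l(G_i)$ from Proposition \ref{propconnbi} and Whitehead's theorem, then the homology from Theorem \ref{theodiscnntbid}(b) with Alexander duality, and finally simple connectivity from the full $2$-skeleton that $k\geq d+3$ provides. One small point, which the paper shares: Theorem \ref{theodiscnntbid} is stated only for $d\geq3$, but for $d=2$ the needed equivalence $BI_2(G)\simeq\bigvee_{k-1}\mathbb{S}^0$ is immediate, because $BI_2(G)$ is the disjoint union of the $k$ contractible complexes $BI_2(G_i)$.
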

\begin{proof}
By Proposition \ref{propconnbi}, $BI_l(G_i)$ is simply connected for $3\leq l\leq d$ and all $1\leq i\leq k$. By hypothesis $BI_2(G_i)$ is simply connected. Now, if $\Delta_l^t(G_i)$ is void, then 
$BI_l(G_i)=\Delta^{V(G_i)}\simeq*$. If $\Delta_l^t(G_i)$ is contractible, then $BI_l(G_i)$ is a simply connected complex with trivial reduced homology groups. By Theorem \ref{whiteheadhomologia}, $BI_l(G_i)$
is contractible. By Theorem \ref{theodiscnntbid}
$$BI_d(G_i)\simeq\bigvee_{\binom{k-1}{d-1}}\mathbb{S}^{d-2}.$$
Thus, by Theorem \ref{dualidadalexander}, $\Delta_d^t(G)$ has the homology of a wedge of $(n-d-1)$-spheres. By Theorem \ref{homttyphomlgrp}, we only need to see that $\Delta_d^t(G)$ is simply connected. 
Let $x,y,z$ by distinct vertices of $G$, because there are $k\geq d+3$ connected components, there is a independent set of cardinality $d$ disjoint from $\{x,y,z\}$. Therefore $\{x,y,z\}$ is a simplex of 
$\Delta_d^t(G)$ and $\mathrm{sk}_2\Delta_d^t(G)=\mathrm{sk}_2\Delta^{V(G)}$.
\end{proof}

The condition on the number of connected components was impose to get that the total cut complex was simply connected. It is natural to ask the following question.
\begin{que}
Under the hypothesis of Theorem \ref{theodisconntotcut} but for $k=d,d+1,d+2$ connected components it is true that 
$$\Delta_d^t(G)\simeq\bigvee_{\binom{k-1}{d-1}}\mathbb{S}^{n-d-1}$$
for all $d\geq2$?
\end{que}

The connectivity condition imposed on the clique complex in Theorem \ref{theodisconntotcut} was needed to use Theorem \ref{theodiscnntbid}. So one can ask the next question.

\begin{que}
Is Theorem \ref{theodisconntotcut} true without the condition on the clique complex?
\end{que}

A natural question is the following: What can be say about disconnected graphs in general? Next proposition answers this question for a particular case. 

\begin{theorem}\label{thdjntunion2bid}
Let $G_1,G_2$ be graphs such that $\iota_i:BI_i(G_j)\longhookrightarrow BI_{i+1}(G_j)$ is null-homotopic for all $2\leq i\leq d-1$ and $j=1,2$. 
Then
$$BI_d(G_1\sqcup G_2)\simeq\bigvee_{i=1}^dBI_i(G_1)*BI_{d+1-i}(G_2)\vee\bigvee_{i=1}^{d-1}\Sigma BI_i(G_1)*BI_{d-i}(G_2).$$
\end{theorem}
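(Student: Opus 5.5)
Apply Lemma \ref{lemdjtunionbid} with $k=2$ and combine it with the remark after Lemma \ref{lemunionorder} about joins of two opposite filtrations. For $k=2$ we have $C(d+1,2)=\{(i,d+1-i):1\leq i\leq d\}$, so
$$BI_d(G_1\sqcup G_2)=\bigcup_{i=1}^{d}X_i,\qquad X_i=BI_i(G_1)*BI_{d+1-i}(G_2).$$
Here $BI_1(G_j)=\{\emptyset\}$, so $X_1=BI_d(G_2)$ and $X_d=BI_d(G_1)$. I will check hypotheses (a)--(c) of Lemma \ref{lemunionorder} for $X_1,\dots,X_d$ and read off the formula. Set $L_i=BI_i(G_1)$, which increases in $i$, and $J_i=BI_{d+1-i}(G_2)$, which decreases in $i$.

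\emph{Intersections.} Since $V(G_1)$ and $V(G_2)$ are disjoint, a simplex $\sigma_1\cup\sigma_2$ of a join lies in $X_i\cap X_j$ exactly when $\sigma_1\in L_i\cap L_j$ and $\sigma_2\in J_i\cap J_j$. Because both families are nested, for $i<j$
$$X_i\cap X_j=BI_i(G_1)*BI_{d+1-j}(G_2).$$
Any intermediate $X_l$ with $i<l<j$ contains this intersection, which gives (b). For consecutive indices, $X_i\cap X_{i+1}=BI_i(G_1)*BI_{d-i}(G_2)$, and these are exactly the complexes suspended in the second wedge of the statement. For (a) we need $X_i\cap X_{i+1}$ to differ from both $\emptyset$ and $\{\emptyset\}$. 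It is never void, and it equals $\{\emptyset\}$ only if both factors are $\{\emptyset\}$, which would force $i=1$ and $d-i=1$, that is $d=2$. The case $d=2$ and small $d$ in general fall outside the lemma's $k\geq3$. I would treat $d=2$ directly: there $BI_2(G_1\sqcup G_2)=BI_2(G_1)\sqcup BI_2(G_2)$, which matches the formula with $\Sigma\{\emptyset\}=\mathbb{S}^0$. For $d=3$ the lemma applies with $k=d=3$.

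\emph{Null-homotopy (c).} This is the step needing care. The inclusion $X_i\cap X_{i+1}\hookrightarrow X_{i+1}$ is the join $\iota\ast\mathrm{id}$ of $BI_i(G_1)\hookrightarrow BI_{i+1}(G_1)$ with the identity of $BI_{d-i}(G_2)$. The inclusion into $X_i$ is the join of the identity of $BI_i(G_1)$ with $BI_{d-i}(G_2)\hookrightarrow BI_{d+1-i}(G_2)$. A join $f*g$ with $f$ null-homotopic is null-homotopic: if $f$ factors through a cone $CA$ up to homotopy, then $f*g$ factors through $CA*B$, which is contractible. The homotopy can be taken through maps of joins, so it is compatible with the join structure. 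This requires each relevant $\iota$ to be null-homotopic. For $2\leq i\leq d-1$ this is the hypothesis. At the ends the maps are $BI_1=\{\emptyset\}\hookrightarrow BI_2$. I would handle these by noting that the join with $\{\emptyset\}$ is the identity, so that for instance $X_1\cap X_2=BI_{d-1}(G_2)\hookrightarrow BI_d(G_2)=X_1$ is null by hypothesis. The other side, $BI_{d-1}(G_2)\hookrightarrow BI_2(G_1)*BI_{d-1}(G_2)$, is null-homotopic as a cone-point inclusion whenever $G_1$ is nonempty.

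Lemma \ref{lemunionorder} then gives
$$\bigvee_i X_i\vee\bigvee_i\Sigma(X_i\cap X_{i+1}),$$
which is the claimed formula after re-indexing. I expect the main obstacle to be the endpoint null-homotopies together with the degenerate small cases, where the lemma's hypotheses (a) and $k\geq3$ must be checked separately.
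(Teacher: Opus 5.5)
Your proposal follows the paper's proof: the paper writes $BI_d(G_1\sqcup G_2)=\bigcup_{i=1}^d BI_i(G_1)*BI_{d+1-i}(G_2)$ and asserts that this family satisfies the hypotheses of Lemma \ref{lemunionorder}, and your intersection computation plus your cone-point argument for the endpoint inclusions out of $BI_1=\{\emptyset\}$ supply the verification the paper omits. Drop your $d=2$ aside, though: there the formula reads $BI_2(G_1)\vee BI_2(G_2)\vee\mathbb{S}^0\simeq\bigl(BI_2(G_1)\vee BI_2(G_2)\bigr)\sqcup\mathrm{pt}$, which is not $BI_2(G_1)\sqcup BI_2(G_2)$ in general (take $G_1=G_2=C_4$), so the statement must be read for $d\geq3$, which is also what the paper's use of Lemma \ref{lemunionorder} with $k=d\geq3$ requires.
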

\begin{proof}
Notice that 
$$BI_d(G_1\sqcup G_2)=\bigcup_{i=1}^{d}BI_i(G_1)*BI_{d+1-i}(G_2)$$
and this union achieve the hypotheses of Lemma \ref{lemunionorder}. Therefore 
$$BI_d(G_1\sqcup G_2)\simeq\bigvee_{i=1}^{d}BI_i(G_1)*BI_{d+1-i}(G_2)\vee\bigvee_{i=1}^{d-1}\Sigma BI_i(G_1)*BI_{d-i}(G_2).$$
\end{proof}

For a graph $G$ we define $G_{+_q}=G\sqcup qK_1$.
 
\begin{cor}\label{plusconstructionbid}
Let $G$ be a graph such that the inclusion $\iota_i:BI_i(G)\longhookrightarrow BI_{i+1}(G)$ is null-homotopic for all $2\leq i\leq d-1$. 
For $d\geq3$, we have that 
$$BI_d(G_{+_q})\simeq\bigvee_{\binom{q}{d-1}}\mathbb{S}^{d-2}\vee\bigvee_{i=2}^{d}\bigvee_{\binom{q}{d-i}}\Sigma^{d-i}BI_{i}(G).$$
\end{cor}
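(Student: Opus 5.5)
We apply Theorem \ref{thdjntunion2bid} with $G_1=G$ and $G_2=qK_1$, then identify every piece of the resulting wedge explicitly. The case $q=0$ is trivial, so assume $q\geq1$. We use the convention $BI_1(H)=\{\emptyset\}$, since only the empty set has independence number $<1$. With this convention, joining with $BI_1(H)$ is the identity. The inclusion $BI_1(H)\hookrightarrow BI_2(H)$ is the inclusion of the empty space, so condition (c) of Lemma \ref{lemunionorder} holds trivially at that step.

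\textbf{Step 1: the complexes of $qK_1$.} A subset of $V(qK_1)$ has independence number equal to its cardinality. Hence $BI_i(qK_1)=\mathrm{sk}_{i-2}\Delta^{q-1}$, and this is
$$\bigvee_{\binom{q-1}{i-1}}\mathbb{S}^{i-2}.$$
When $i-1\geq q$ the complex is the full simplex, which is contractible; this agrees with $\binom{q-1}{i-1}=0$ if we read an empty wedge as a point. The inclusion $\mathrm{sk}_{i-2}\Delta^{q-1}\hookrightarrow\mathrm{sk}_{i-1}\Delta^{q-1}$ is null-homotopic for $i\geq2$. Indeed, the target has full $(i-1)$-skeleton and is therefore $(i-2)$-connected, while the source has dimension $i-2$, so cellular approximation applies. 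Together with the hypothesis on $G$, this verifies the hypotheses of Theorem \ref{thdjntunion2bid}.

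\textbf{Step 2: identify each term.} Theorem \ref{thdjntunion2bid} gives
$$BI_d(G_{+_q})\simeq\bigvee_{i=1}^d BI_i(G)*BI_{d+1-i}(qK_1)\vee\bigvee_{i=1}^{d-1}\Sigma\, BI_i(G)*BI_{d-i}(qK_1).$$
For CW-complexes, join distributes over wedges up to homotopy, since $X*Y\simeq\Sigma(X\wedge Y)$. Moreover $X*\mathbb{S}^m\simeq\Sigma^{m+1}X$. Using these two facts, the terms simplify as follows.
\begin{itemize}
\item The $i=1$ term of the first wedge is $BI_d(qK_1)\simeq\bigvee_{\binom{q-1}{d-1}}\mathbb{S}^{d-2}$.
\item The $i=1$ term of the second wedge is $\Sigma BI_{d-1}(qK_1)\simeq\bigvee_{\binom{q-1}{d-2}}\mathbb{S}^{d-2}$.
\item For $2\leq i\leq d$, the first wedge contributes $\bigvee_{\binom{q-1}{d-i}}\Sigma^{d-i}BI_i(G)$. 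For $i=d$ this is just $BI_d(G)$, coming from the join with $\{\emptyset\}$.
\item For $2\leq i\leq d-1$, the second wedge contributes $\Sigma\bigl(BI_i(G)*\bigvee_{\binom{q-1}{d-i-1}}\mathbb{S}^{d-i-2}\bigr)\simeq\bigvee_{\binom{q-1}{d-i-1}}\Sigma^{d-i}BI_i(G)$.
\end{itemize}
In degenerate cases a factor is a contractible full simplex, or a wedge with zero summands. There the join is contractible, which is consistent with the zero coefficient.

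\textbf{Step 3: collect terms.} Pascal's rule gives $\binom{q-1}{d-1}+\binom{q-1}{d-2}=\binom{q}{d-1}$ and $\binom{q-1}{d-i}+\binom{q-1}{d-i-1}=\binom{q}{d-i}$. This yields exactly the stated wedge $\bigvee_{\binom{q}{d-1}}\mathbb{S}^{d-2}\vee\bigvee_{i=2}^d\bigvee_{\binom{q}{d-i}}\Sigma^{d-i}BI_i(G)$.

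\textbf{Main obstacle.} The delicate point is the bookkeeping in the degenerate cases, namely joins with $\{\emptyset\}$ and with contractible full simplices. One must confirm that these match the binomial coefficients when they vanish. A second point to check is that Lemma \ref{lemunionorder} tolerates the first intersection being $BI_1(G)*BI_{d-1}(qK_1)=BI_{d-1}(qK_1)$, which is nonempty and not $\{\emptyset\}$ whenever $q\geq1$, as required by condition (a) of that lemma.
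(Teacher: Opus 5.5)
Your proposal is correct and is essentially the paper's proof. You apply Theorem \ref{thdjntunion2bid} with $G_1=G$ and $G_2=qK_1$, use $BI_i(qK_1)\cong\mathrm{sk}_{i-2}\Delta^{q-1}\simeq\bigvee_{\binom{q-1}{i-1}}\mathbb{S}^{i-2}$, convert joins with wedges of spheres into iterated suspensions, and merge coefficients by Pascal's rule. The paper stops at the intermediate wedge and says the result ``can be easily deduced''; your Steps 2--3 and the degenerate-case bookkeeping (joins with $\{\emptyset\}$, contractible full simplices, $q=0$) make that final step explicit.
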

\begin{proof}
Notice that 
$$BI_i(qK_1)\cong\mathrm{sk}_{i-2}\Delta^{q-1}\simeq\bigvee_{\binom{q-1}{i-1}}\mathbb{S}^{i-2},$$
thus $BI_i(qK_1)\longhookrightarrow BI_{i+1}(qK_1)$ is null-homotopic for $2\leq i\leq d-1$. By Theorem \ref{thdjntunion2bid} we have that 
$$BI_d(G_{+_q})\simeq\bigvee_{i=1}^{d}BI_i(G)*BI_{d+1-i}(qK_1)\vee\bigvee_{i=1}^{d-1}\Sigma BI_i(G)*BI_{d-i}(qK_1).$$
$$\simeq\bigvee_{i=1}^{d}\left(BI_i(G)*\bigvee_{\binom{q-1}{d-i}}\mathbb{S}^{d-i-1}\right)\vee\bigvee_{i=1}^{d-1}\Sigma\left( BI_i(G)*\bigvee_{\binom{q-1}{d-i-1}}\mathbb{S}^{d-i-2}\right)$$
and from this equivalence the result can be easily deduced.
\end{proof}

Now we give an example of a disconnected graph such $G$ that $BI_d(G_{+_{d-2}})$ does not have the homotopy type 
of a wedge of spheres for $d\geq3$.

\begin{figure}
    \centering
    \begin{tikzpicture}[line cap=round,line join=round,>=triangle 45,x=0.5cm,y=0.5cm]
\clip(1.5,-1) rectangle (13.5,10);
\draw (6,0)-- (9,0);
\draw (9,0)-- (11.43,1.76);
\draw (11.43,1.76)-- (12.35,4.62);
\draw (12.35,4.62)-- (11.43,7.47);
\draw (11.43,7.47)-- (9,9.23);
\draw (9,9.23)-- (6,9.23);
\draw (6,9.23)-- (3.57,7.47);
\draw (3.57,7.47)-- (2.65,4.62);
\draw (2.65,4.62)-- (3.57,1.76);
\draw (3.57,1.76)-- (6,0);
\draw (6,2.5)-- (9,2.5);
\draw (9,2.5)-- (9.93,5.35);
\draw (9.93,5.35)-- (7.5,7.12);
\draw (7.5,7.12)-- (5.07,5.35);
\draw (5.07,5.35)-- (6,2.5);
\draw (7.5,7.12)-- (7.5,4.5);
\draw (7.5,4.5)-- (9.93,5.35);
\draw (9,2.5)-- (7.5,4.5);
\draw (7.5,4.5)-- (6,2.5);
\draw (7.5,4.5)-- (5.07,5.35);
\draw (9,2.5)-- (11.43,1.76);
\draw (9,2.5)-- (9,0);
\draw (9,2.5)-- (6,0);
\draw (11.43,1.76)-- (9.93,5.35);
\draw (12.35,4.62)-- (9.93,5.35);
\draw (9.93,5.35)-- (11.43,7.47);
\draw (11.43,7.47)-- (7.5,7.12);
\draw (9,9.23)-- (7.5,7.12);
\draw (7.5,7.12)-- (6,9.23);
\draw (6,9.23)-- (5.07,5.35);
\draw (5.07,5.35)-- (3.57,7.47);
\draw (5.07,5.35)-- (2.65,4.62);
\draw (6,2.5)-- (2.65,4.62);
\draw (6,2.5)-- (3.57,1.76);
\draw (6,0)-- (6,2.5);
\begin{scriptsize}
\fill [color=black] (6,0) circle (1.5pt);
\draw[color=black] (6,-0.32) node {$2$};
\fill [color=black] (9,0) circle (1.5pt);
\draw[color=black] (9,-0.32) node {$1$};
\fill [color=black] (11.43,1.76) circle (1.5pt);
\draw[color=black] (11.75,1.76) node {$5$};
\fill [color=black] (12.35,4.62) circle (1.5pt);
\draw[color=black] (12.65,4.62) node {$4$};
\fill [color=black] (11.43,7.47) circle (1.5pt);
\draw[color=black] (11.75,7.5) node {$3$};
\fill [color=black] (9,9.23) circle (1.5pt);
\draw[color=black] (9,9.55) node {$2$};
\fill [color=black] (6,9.23) circle (1.5pt);
\draw[color=black] (6,9.55) node {$1$};
\fill [color=black] (3.57,7.47) circle (1.5pt);
\draw[color=black] (3.23,7.47) node {$5$};
\fill [color=black] (2.65,4.62) circle (1.5pt);
\draw[color=black] (2.33,4.62) node {$4$};
\fill [color=black] (3.57,1.76) circle (1.5pt);
\draw[color=black] (3.25,1.76) node {$3$};
\fill [color=black] (7.5,4.5) circle (1.5pt);
\draw[color=black] (7.5,4.1) node {$11$};
\fill [color=black] (6,2.5) circle (1.5pt);
\draw[color=black] (6.2,2.18) node {$9$};
\fill [color=black] (9,2.5) circle (1.5pt);
\draw[color=black] (9.25,2.7) node {$8$};
\fill [color=black] (9.93,5.35) circle (1.5pt);
\draw[color=black] (9.93,5.67) node {$7$};
\fill [color=black] (7.5,7.12) circle (1.5pt);
\draw[color=black] (7.5,7.5) node {$6$};
\fill [color=black] (5.07,5.35) circle (1.5pt);
\draw[color=black] (4.8,5) node {$10$};
\end{scriptsize}
\end{tikzpicture}
\caption{Graph $G$ of order $11$ such that $BI_2(G)\cong\mathbb{RP}^2$}\label{rp2}
\end{figure}

\begin{exa}\label{exbid}
We take graph $G$ in Figure \ref{rp2}. Thus $BI_2(G)\cong\mathbb{RP}^2$ and the inclusion 
$$\iota:BI_2(G)\longhookrightarrow BI_3(G)$$ is null-homotopic as $BI_3(G)$ is simply connected by Proposition \ref{propconnbi}. 
For $d\geq 4$, we have that $BI_d(G)\simeq*$ because $\alpha(G)=3$. 
The minimal non-simplices of $BI_3(G)$ are 
\begin{equation}
    \{1,3,11\},\{1,4,11\},\{2,4,11\},\{2,5,11\},\{3,5,11\},\{1,7,9\},\{2,7,10\},\{3,8,10\},\{4,6.8\},\{5,6,9\}
\end{equation}
If we take $J$ the complex with maximal simplices the complements of the first $5$ non-simplexes of $(2)$ and 
$L$ the complex with maximal simplices the complements of the last $5$ non-simplexes of $(2)$, then $\Delta_3^t(G)=J\cup L$. 
Notice that $J\simeq*\simeq L$, thus $\Delta_3^t(G)\simeq\Sigma(J\cap L)$.
By calculations done with the computer we know that $\tilde{H}_4(\Delta_3^t(G))\cong\mathbb{Z}$ and that the other groups are trivial. 
Now, $J\cap L$ is connected as for any pair $i,j$ in $\underline{10}$, the edge is $\{i.j\}$ is in $J\cap L$. Therefore 
$\Delta_3^t(G)$ is simply connected and $\Delta_3^t(G)\simeq\mathbb{S}^4$. Now, by Alexander Duality we know that 
$\tilde{H}_4(BI_3(G))\cong\mathbb{Z}$. By Proposition \ref{propconnbi} $BI_3(G)$ is simply connected, therefore $BI_3(G)\simeq\mathbb{S}^4$.
By Corollary \ref{plusconstructionbid}, for all $d\geq2$, 
$G_{+_{d-2}}$ is a graph such that 
$$BI_d(G_{+_{d-2}})\simeq\Sigma^{d-2}\mathbb{RP}^2\vee\bigvee_{d-2}\mathbb{S}^{d+1}.$$ 
\end{exa}

It is natural to ask what can be say about the total cut complex of $G$ if we add isolated vertices. If we add only one extra vertex and $\Delta_{d+1}^t(G)\longhookrightarrow\Delta_{d}^t(G)$ is null-homotopic, then
$\Delta_d(G_{+_1})\simeq\Delta_d^t(G)\vee\Sigma\Delta_{d+1}^t(G)$ \citep[see Theorem 3.11]{totcutcompl}. The Corollary of next Theorem we will show what happens for the case $\Delta_d^t(G_{+_{q}})$ under similar hypotheses.

\begin{theorem}\label{thdjuniontotcut}
Let $G_1,G_2$ be graphs such that $\alpha(G_1)+\alpha(G_2)\geq d$ and  $\iota_i:\Delta_{i+1}^t(G_j)\longhookrightarrow\Delta_{i}^t(G_j)$ is 
null-homotopic for all $1\leq i\leq\alpha(G_j)-1$ and $j=1,2$.Then
$$\Delta_d^t(G_1\sqcup G_2)\simeq\bigvee_{i=l}^u\Delta_i^t(G_1)*\Delta_{d-i}^t(G_2)\bigvee_{i=l}^{u-1}\Sigma\Delta_{i+1}^t(G_1)*\Delta_{d-i}^t(G_2),$$
where $u=\min{\alpha(G_1),d}$ and $l=\max\{0,d-\alpha(G_2)\}$.
\end{theorem}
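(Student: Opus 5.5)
We mirror the proof of Theorem \ref{thdjntunion2bid}: decompose $\Delta_d^t(G_1\sqcup G_2)$ as a union of joins and apply Lemma \ref{lemunionorder}. A set $\sigma=\sigma_1\sqcup\sigma_2$ with $\sigma_j\subseteq V(G_j)$ lies in $\Delta_d^t(G_1\sqcup G_2)$ exactly when $\alpha(G_1[\sigma_1^c])+\alpha(G_2[\sigma_2^c])\geq d$. This holds exactly when there is an $i$ with $\alpha(G_1[\sigma_1^c])\geq i$ and $\alpha(G_2[\sigma_2^c])\geq d-i$. Hence
$$\Delta_d^t(G_1\sqcup G_2)=\bigcup_{i=l}^{u}\Delta_i^t(G_1)*\Delta_{d-i}^t(G_2).$$
We adopt the conventions $\Delta_0^t(G_j)=\Delta^{V(G_j)}$ and $\Delta_i^t(G_j)=\emptyset$ for $i>\alpha(G_j)$. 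The restriction $l\leq i\leq u$ removes exactly the indices for which a factor is void. The hypothesis $\alpha(G_1)+\alpha(G_2)\geq d$ guarantees $l\leq u$.

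Set $X_i=\Delta_i^t(G_1)*\Delta_{d-i}^t(G_2)$ for $l\leq i\leq u$. As $i$ increases, the first factor decreases along the filtration $\Delta_1^t\supseteq\Delta_2^t\supseteq\cdots$ and the second factor increases. This is exactly the remark after Lemma \ref{lemunionorder}, with $L_i=\Delta_i^t(G_1)$ and $J_i=\Delta_{d-i}^t(G_2)$. For $i<j$ the intersection is $X_i\cap X_j=\Delta_j^t(G_1)*\Delta_{d-i}^t(G_2)$, and it is contained in every $X_m$ with $i<m<j$. This gives (b). In particular $X_i\cap X_{i+1}=\Delta_{i+1}^t(G_1)*\Delta_{d-i}^t(G_2)$, which is the complex that gets suspended in the statement.

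For (c), the inclusion $X_i\cap X_{i+1}\hookrightarrow X_i$ is the join of $\iota_i\colon\Delta_{i+1}^t(G_1)\hookrightarrow\Delta_i^t(G_1)$ with the identity on $\Delta_{d-i}^t(G_2)$. A join of a null-homotopic map with any map is null-homotopic, so this inclusion is null-homotopic. The inclusion into $X_{i+1}$ is handled the same way using $\iota_{d-i-1}$ for $G_2$.

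The boundary indices need separate care. When $i=0$ or $d-i=0$, the factor is the full simplex $\Delta^{V(G_j)}$, and any inclusion into a simplex is null-homotopic anyway. Indices where the hypothesis on $\iota$ does not literally apply, such as $i=\alpha(G_1)$, do not occur as inclusions in the union, since we stop at $u$.

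Condition (a) requires $\emptyset\neq X_i\cap X_{i+1}\neq\{\emptyset\}$. This holds when both factors are nonvoid, except that $\{\emptyset\}*\{\emptyset\}$ can arise when $\alpha$ of a graph equals its order. We expect the main obstacle to be exactly these degenerate cases. In them we argue directly: for two pieces the homotopy pushout formula of Lemma \ref{homocolimpegado} still applies, with $\Sigma\{\emptyset\}=\mathbb{S}^0$. For $k\geq3$ pieces we use Lemma \ref{lemunionorder} itself. If $u-l+1\leq 2$, we apply Lemma \ref{homocolimpegado} to the pushout $X_l\hookleftarrow X_l\cap X_u\hookrightarrow X_u$, which gives the same formula. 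Once (a)--(c) are verified, Lemma \ref{lemunionorder} yields the claimed wedge decomposition.
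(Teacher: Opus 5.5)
Your proposal is correct and follows the paper's proof. You use the same decomposition $\Delta_d^t(G_1\sqcup G_2)=\bigcup_{i=l}^u\Delta_i^t(G_1)*\Delta_{d-i}^t(G_2)$ and the same intersection formula $X_i\cap X_{i+1}=\Delta_{i+1}^t(G_1)*\Delta_{d-i}^t(G_2)$, then apply Lemma \ref{lemunionorder}; your checks of (b), (c) and the boundary indices $i=0$, $d-i=0$ fill in what the paper leaves implicit.

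The one loose point is the degenerate case. Your claim that Lemma \ref{homocolimpegado} ``still applies'' with $\Sigma\{\emptyset\}=\mathbb{S}^0$ is false in general, because a pushout over the empty space is a disjoint union, not a wedge. You also do not show that this case cannot occur with three or more pieces. Both are fixed by noting that $X_i\cap X_{i+1}=\{\emptyset\}$ forces $G_1,G_2$ to be edgeless with $|V(G_1)|+|V(G_2)|=d+1$. Then $u=l+1$, so there are only two pieces, and both sides of the formula are just $d+1$ points.
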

\begin{proof}
The proof is analogous to the proof of Theorem \ref{thdjntunion2bid} using the facts that
$$\Delta_d^t(G_1\sqcup G_2)=\bigcup_{i=l}^u\Delta_i^t(G_1)*\Delta_{d-i}^t(G_2),$$
and
$$\left(\Delta_i^t(G_1)*\Delta_{d-i}^t(G_2)\right)\cap\left(\Delta_{i+1}^t(G_1)*\Delta_{d-i-1}^t(G_2)\right)=\Delta_{i+1}^t(G_1)*\Delta_{d-i}^t(G_2).$$
Using Lemma \ref{lemunionorder} we obtain the result.
\end{proof}

\begin{cor}\label{plusconstructiontot}
Let $G$ be a graph such that $\alpha(G)=k\geq2$ and $\iota_i:\Delta_{i+1}^t(G)\longhookrightarrow\Delta_{i}^t(G)$ is null-homotopic for all $1\leq i\leq k-1$.
For $q\geq1$ such that $q+k\geq d$ we take $u=\min\{k,d\}$ and $l=\max\{0,d-q\}$, then
$$\Delta_d^t(G_{+_{q}})\simeq\bigvee_{i=l}^{u}\bigvee_{\binom{q}{d-i}}\Sigma^{q-d+i}\Delta_{i}^t(G).$$
%\begin{itemize}
%    \item[(a)] If $q,k\geq d$, then 
 %   $$\Delta_d^t(G_{+_{q}})\simeq\bigvee_{\binom{q}{d-1}}\mathbb{S}^{n+q-d-1}\bigvee_{i=2}^{d}\bigvee_{\binom{q}{d-i}}\Sigma^{q-d+i}\Delta_{i}^t(G).$$
  %  \item[(b)] If $q\geq d>k$, then 
   % $$\Delta_d^t(G_{+_{q}})\simeq\mathbb{S}^{n+q-d-1}\vee\bigvee_{i=2}^{k}\bigvee_{\binom{q}{d-i}}\Sigma^{q-d+i}\Delta_{i}^t(G).$$
    %\item[(c)] If $k\geq d>q$, then
    %$$\Delta_d^t(G_{+_{q}})\simeq\bigvee_{i=d-q}^{d}\bigvee_{\binom{q}{d-i}}\Sigma^{q-d+i}\Delta_{i}^t(G).$$
    %\item[(d)] If $q,k<d$, then
    %$$\Delta_d^t(G_{+_{q}})\simeq\bigvee_{i=d-q}^{k}\bigvee_{\binom{q}{d-i}}\Sigma^{q-d+i}\Delta_{i}^t(G).$$
%\end{itemize}
%Let $G$ be a graph such that $\alpha(G)\geq d\geq4$, $\Delta_d^t(G)$ is connected and the inclusion $\iota_i:\Delta_{i+1}^t(G)\longhookrightarrow\Delta_{i}^t(G)$ is null-homotopic for all $2\leq i\leq d-1$. 
%Then
%$$\Delta_d^t(G_{+_{d-2}})\simeq\bigvee_{i=2}^{d}\bigvee_{\binom{d-2}{d-i}}\Sigma^{i-2}\Delta_{i}^t(G).$$
\end{cor}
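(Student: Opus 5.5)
We apply Theorem \ref{thdjuniontotcut} with $G_1=G$ and $G_2=qK_1$, and then simplify each wedge summand.

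\textbf{Step 1: the complexes of $qK_1$.} Since $qK_1$ has no edges, every $j$-subset of its vertices is independent. Hence $\Delta_j^t(qK_1)$ consists of the $\sigma$ with $q-|\sigma|\geq j$, that is,
$$\Delta_j^t(qK_1)=\mathrm{sk}_{q-j-1}\Delta^{q-1}\simeq\bigvee_{\binom{q-1}{j}}\mathbb{S}^{q-j-1}$$
for $0\leq j\leq q-1$. At the endpoints we get $\Delta_0^t(qK_1)=\Delta^{q-1}\simeq*$ and $\Delta_q^t(qK_1)=\{\emptyset\}$. The inclusion $\Delta_{j+1}^t(qK_1)\hookrightarrow\Delta_j^t(qK_1)$ sends a wedge of $(q-j-2)$-spheres into a $(q-j-2)$-connected complex, so it is null-homotopic. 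The endpoint cases are trivially null-homotopic, since the target is either contractible or the source is $\{\emptyset\}$.

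Together with the hypothesis on $G$ and $\alpha(G)+\alpha(qK_1)=k+q\geq d$, Theorem \ref{thdjuniontotcut} applies. With $u=\min\{k,d\}$ and $l=\max\{0,d-q\}$ it gives
$$\Delta_d^t(G_{+_q})\simeq\bigvee_{i=l}^{u}\Delta_i^t(G)*\Delta_{d-i}^t(qK_1)\vee\bigvee_{i=l}^{u-1}\Sigma\,\Delta_{i+1}^t(G)*\Delta_{d-i}^t(qK_1).$$

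\textbf{Step 2: simplify each term.} Using $X*\bigvee\mathbb{S}^m\simeq\bigvee\Sigma^{m+1}X$, the first family contributes
$$\bigvee_{\binom{q-1}{d-i}}\Sigma^{q-d+i}\Delta_i^t(G).$$
The second family, after the outer suspension, contributes
$$\bigvee_{\binom{q-1}{d-i}}\Sigma^{q-d+i+1}\Delta_{i+1}^t(G).$$
Reindexing the second family by $i\mapsto i-1$ turns this into $\binom{q-1}{d-i+1}$ copies of $\Sigma^{q-d+i}\Delta_i^t(G)$, for $l+1\leq i\leq u$. Pascal's identity $\binom{q-1}{d-i}+\binom{q-1}{d-i+1}=\binom{q}{d-i}$ then combines the two families into the claimed formula.

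\textbf{Step 3: boundary terms.} The main obstacle is bookkeeping at the ends $i=l$ and $i=u$. At the lower end $i=l=d-q$, the factor $\Delta_q^t(qK_1)=\{\emptyset\}$ is the unit for the join, so the term is just $\Delta_{d-q}^t(G)$. The count matches: only the first family contributes there, with $\binom{q-1}{q}=0$ in place of a second-family term, and $\binom{q}{q}=1$. If $l=0$, then $\Delta_0^t(G)$ is the full simplex (or the convention in Theorem \ref{thdjuniontotcut}), so one has to check which terms are contractible and that the counts still agree.

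At the upper end, the case $u=k<d$ together with the second-family reindexing needs care. If $u=d$, the factor $\Delta_0^t(qK_1)$ is contractible, so that summand vanishes; this agrees with $\binom{q}{0}$ in the formula only once one checks the convention for that term. I would verify these edge cases separately using the conventions $\binom{a}{b}=0$ for $b>a$ or $b<0$, since this is the step most likely to need adjusting.
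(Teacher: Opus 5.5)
\textbf{Overall.} Your route is the paper's: apply Theorem~\ref{thdjuniontotcut} with $G_2=qK_1$, identify $\Delta_j^t(qK_1)=\mathrm{sk}_{q-j-1}\Delta^{q-1}$, turn joins with wedges of spheres into suspensions, reindex the second wedge, and combine by Pascal's rule. However, both counting steps are wrong as written.

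\textbf{The two counting errors.} In Step~1, $\mathrm{sk}_m\Delta^{q-1}\simeq\bigvee_{\binom{q-1}{m+1}}\mathbb{S}^m$ gives $\binom{q-1}{q-j}=\binom{q-1}{j-1}$ spheres, not $\binom{q-1}{j}$. For example, $\Delta_1^t(3K_1)=\partial\Delta^2$ is one circle, not two. Also, your formula at $j=0$ would give $\mathbb{S}^{q-1}$, contradicting your own remark that $\Delta_0^t(qK_1)=\Delta^{q-1}$ is contractible. In Step~2, $\binom{q-1}{d-i}+\binom{q-1}{d-i+1}$ equals $\binom{q}{d-i+1}$, not $\binom{q}{d-i}$. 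The two off-by-one errors cancel, which is why you land on the stated formula, but the derivation does not establish it. They are also the source of the boundary mismatches you sensed in Step~3. For instance, with your counts the top index $i=u=d$ would receive $\binom{q-1}{1}=q-1$ copies of $\Sigma^q\Delta_d^t(G)$ from the reindexed second family, instead of one.

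\textbf{The fix.} Use the correct count $\binom{q-1}{j-1}$. Then the first family contributes $\binom{q-1}{d-i-1}$ copies of $\Sigma^{q-d+i}\Delta_i^t(G)$ for $l\le i\le u$. The reindexed second family contributes $\binom{q-1}{d-i}$ copies for $l+1\le i\le u$. Pascal's rule gives $\binom{q-1}{d-i-1}+\binom{q-1}{d-i}=\binom{q}{d-i}$, as required.

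\textbf{The boundary terms.} With the convention $\binom{a}{b}=0$ for $b<0$ or $b>a$, they need no separate analysis:
\begin{itemize}
\item $j=q$ gives a single $(-1)$-sphere $\{\emptyset\}$, the unit of the join. This accounts for $\binom{q}{q}=1$ at $i=l=d-q$.
\item $j=0$ gives no spheres. So when $u=d$, the first-family term $\Delta_d^t(G)*\Delta^{q-1}$ is contractible, and the single copy of $\Sigma^q\Delta_d^t(G)$ comes from $\Sigma\bigl(\Delta_d^t(G)*\partial\Delta^{q-1}\bigr)$.
\item When $l=0$, every summand at $i=0$ involves $\Delta_0^t(G)=\Delta^{V(G)}$ and is contractible on both sides.
\end{itemize}
This is exactly the bookkeeping in the paper's proof.
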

\begin{proof}
Notice that 
$$\Delta_i^t(qK_1)\cong\mathrm{sk}_{q-i-1}\Delta^{q-1}\simeq\bigvee_{\binom{q-1}{q-i}}\mathbb{S}^{q-i-1}.$$
Therefore $\iota_i:\Delta_{i+1}^t(qK_1)\longhookrightarrow\Delta_{i}^t(qK_1)$ is null-homotopic $1\leq i\leq q-2$. By Theorem 
\ref{thdjuniontotcut} we have that 
$$\Delta_d^t(G_{+_{q}})\simeq\bigvee_{i=l}^{u}\Delta_i^t(G)*\Delta_{d-i}^t(qK_1)\bigvee_{i=l}^{u-1}\Sigma\Delta_{i+1}^t(G)*\Delta_{d-i}^t(qK_1)$$
$$\simeq\bigvee_{i=l}^{u}\Delta_i^t(G)*\mathrm{sk}_{q-d+i-1}\Delta^{q-1}\;\vee\bigvee_{i=l}^{u-1}\Sigma\Delta_{i+1}^t(G)*\mathrm{sk}_{q-d+i-1}\Delta^{q-1}$$
$$\simeq\bigvee_{i=l}^{u}\left(\Delta_i^t(G)*\bigvee_{\binom{q-1}{q-d+i}}\mathbb{S}^{q-d+i-1}\right)\;\vee\bigvee_{i=l}^{u-1}\Sigma\left(\Delta_{i+1}^t(G)*\bigvee_{\binom{q-1}{q-d+i}}\mathbb{S}^{q-d+i-1}\right).$$
%Notice that $\Delta_0^t(G)*\mathrm{sk}_{q-d-1}\Delta^{V_+}\simeq*\simeq\Delta_d^t(G)*\mathrm{sk}_{q-1}\Delta^{V_+}$. 
Reindexing the second wedge and using the binomial identities we obtain the result.

%For the remaining cases we have that:
%$$(b)\;\displaystyle\Delta_d^t(G_{+_q})=\bigcup_{i=0}^{k}\Delta_i^t(G)*\mathrm{sk}_{q-d+i-1}\Delta^{V_+},$$
%$$(c)\;\displaystyle\Delta_d^t(G_{+_q})=\bigcup_{i=d-q}^{d}\Delta_i^t(G)*\mathrm{sk}_{q-d+i-1}\Delta^{V_+}\mbox{ and }$$
%$$(d)\;\displaystyle\Delta_d^t(G_{+_q})=\bigcup_{i=d-q}^{k}\Delta_i^t(G)*\mathrm{sk}_{q-d+i-1}\Delta^{V_+}.$$
\end{proof}

\begin{exa}
Taking the graph $G$ from Example \ref{exbid}, we know that $\Delta_3^t(G)\simeq\mathbb{S}^4$ and $\Delta_d^t(G)=\emptyset$ for $d\geq4$. By Alexander Duality
$$\tilde{H}_q(\Delta_d^t(G_{+_{d-2}}))\cong\left\lbrace\begin{array}{cc}
   \mathbb{Z}_2  & \mbox{for } q=6 \\
   \mathbb{Z}^{d-2}  & \mbox{for } q=5\\
   0 & \mbox{in other case}
\end{array}\right..$$
We will show that $\Delta_d^t(G_{+_{d-2}})$ has the homotopy type of a wedge of $5$-spheres and a copy of $\Sigma^5\mathbb{RP}^2$.
Notice that $\Delta_2^t(G)$ is simply connected as for any $3$-vertex set one of the $5$-cycles $1,2,3,4,5$ or $6,7,8,9,10$ shares at most one vertex with the set, thus 
$\mathrm{sk}_2\Delta_2^t(G)\cong\mathrm{sk}_2\Delta^{10}$. Then $\Delta_2^t(G)\simeq\Sigma^5\mathbb{RP}^2$ as it is the Moore space $M(\mathbb{Z}_2,6)$. Because $\Delta_2^t(G)$
is $5$-connected, we have that $\Delta_3^t(G)\longhookrightarrow\Delta_2^t(G)$ is null-homotopic. Now, $\Delta_1^t(G)\cong\mathbb{S}^9$, thus $\Delta_2^t(G)\longhookrightarrow\Delta_1^t(G)$ is null-homotopic.
From all this, by Corollary \ref{plusconstructiontot}, we get that 
$$\Delta_d^t(G_{+_{d-2}})\simeq\Sigma^5\mathbb{RP}^2\vee\bigvee_{d-2}\mathbb{S}^5.$$
\end{exa}

\subsection{Complete multipartite graphs}
In \citep[Proposition 4.6]{totcutcompl} the homotopy type of the total cut complex for complete bipartite graphs was calculated. For general multipartite graphs 
in \citep[see Theorems 7.8 $\&$ 7.9]{cutcomplexesi} the homotopy type is calculated for the cut complex. For multipartite graphs these complexes and the total cut complexes are the same. Here we do the 
calculations for the bounded independence complexes. Next proposition can be deduced using Proposition \ref{propconnbi} and the calculations of \citep{cutcomplexesi}, here we give a stray-forward demonstration.
\begin{prop}\label{propmultprtgrph}
For $k\geq2$, take $K_{n_1,\dots,n_k}$.
If $n_i\leq d-1$ for some $1\leq i\leq k$, then $BI_d(K_{n_1,\dots,n_k})\simeq*$. Otherwise 
$$BI_d(K_{n_1,\dots,n_k})\simeq\bigvee_{b(n_1,\dots,n_k)}\mathbb{S}^{k(d-1)-1}$$
where $\displaystyle b(n_1,\dots,n_k)=\prod_{i=1}^k\binom{n_i-1}{d-1}$
\end{prop}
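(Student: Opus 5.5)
For $G=K_{n_1,\dots,n_k}$ with parts $V_1,\dots,V_k$, I would first describe $BI_d(G)$ directly. Vertices in different parts are adjacent, so an independent set of $G$ lies inside a single part. Hence $\alpha(G[\sigma])=\max_i|\sigma\cap V_i|$, and therefore
$$BI_d(G)=\bigast_{i=1}^k\mathrm{sk}_{d-2}\Delta^{V_i},$$
since $\sigma$ is a simplex exactly when $|\sigma\cap V_i|\leq d-1$ for every $i$. This equality of complexes is the whole combinatorial content of the argument.

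Next I would compute the homotopy type of each join factor. If $n_i\leq d-1$, then $\mathrm{sk}_{d-2}\Delta^{V_i}=\Delta^{V_i}$ is a full simplex, so it is contractible. A join with a contractible complex is contractible, so $BI_d(G)\simeq*$ in that case. If instead $n_i\geq d$ for all $i$, then by the known fact from the preliminaries, applied to $\Delta^{V_i}\cong\Delta^{n_i-1}$,
$$\mathrm{sk}_{d-2}\Delta^{V_i}\simeq\bigvee_{\binom{n_i-1}{d-1}}\mathbb{S}^{d-2}.$$

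Finally, I would use that joins respect homotopy equivalences between CW-complexes, together with $X*Y\simeq\Sigma(X\wedge Y)$ and the distributivity of smash products over wedges. The join of wedges of spheres $\bigvee_{a}\mathbb{S}^{p}$ and $\bigvee_b\mathbb{S}^q$ is then $\bigvee_{ab}\mathbb{S}^{p+q+1}$. Inducting over the $k$ factors, each join adds $(d-2)+1=d-1$ to the dimension, starting from $d-2$. This gives spheres of dimension $k(d-2)+(k-1)=k(d-1)-1$, and the number of spheres multiplies to $\prod_i\binom{n_i-1}{d-1}=b(n_1,\dots,n_k)$.

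The only step needing care is the join of wedges. I would handle it by citing the standard equivalence $\mathbb{S}^p*\mathbb{S}^q\cong\mathbb{S}^{p+q+1}$ and the formula $(\bigvee X_\alpha)*Y\simeq\bigvee(X_\alpha*Y)$, which holds for connected complexes. For $d=2$ the factors are discrete sets, and I would check directly that the formula still holds: a join of discrete sets is a complete multipartite complex, which is a wedge of spheres of the stated count. Alternatively, I would use that the reduced homology of a join satisfies a Künneth formula, and apply Theorem \ref{homttyphomlgrp} once $k(d-1)-1\geq2$ and the join is simply connected.
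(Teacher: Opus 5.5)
Your proposal is correct and follows the paper's proof: the paper also identifies $BI_d(K_{n_1,\dots,n_k})$ with $\bigast_{i=1}^k\mathrm{sk}_{d-2}\Delta^{n_i-1}$ and replaces each factor by $\bigvee_{\binom{n_i-1}{d-1}}\mathbb{S}^{d-2}$, leaving implicit the join-of-wedges computation that you spell out. You do not need to restrict $(\bigvee X_\alpha)*Y\simeq\bigvee(X_\alpha*Y)$ to connected complexes, since $X*Y\simeq\Sigma(X\wedge Y)$ holds for all CW complexes, so the case $d=2$ requires no separate treatment.
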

\begin{proof}
Any simplex of $BI_d(K_{n_1,\dots,n_k})$ can not have more that $d-1$ vertices in each set of the vertex partition of $V(G)$. Thus
$$BI_d(K_{n_1,\dots,n_k})\cong\bigast_{i=1}^k\left(\mathrm{sk}_{d-2}\Delta^{n_i-1}\right)\simeq\bigast_{i=1}^k\left(\bigvee_{\binom{n_i-1}{d-1}}\mathbb{S}^{d-2}\right)$$
\end{proof}

\subsection{Cycles and their powers}
\begin{theorem}\citep{totcutcompl}\label{totcomcycles}
For $d\geq2$ and $n\geq2d$
$$\Delta_d^t(C_n)\simeq\mathbb{S}^{n-2d}$$
\end{theorem}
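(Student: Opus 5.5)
Since the statement is cited from \citep{totcutcompl}, I would give an independent proof by duality, following the strategy of the introduction. First compute $BI_d(C_n)$, then transfer the answer to $\Delta_d^t(C_n)$ using Theorem \ref{dualidadalexander}, the connectivity from Theorem \ref{theoconngirth}, and Theorem \ref{homttyphomlgrp}.

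\emph{The bounded independence complex.} The claim is $BI_d(C_n)\simeq\mathbb{S}^{2d-3}$ for $n\geq2d$. The case $n=2d$ is already handled in the proof of the corollary to Theorem \ref{theoconngirth}: there $BI_d(C_{2d})$ is a join of two boundaries of simplices, hence $\mathbb{S}^{2d-3}$. For $n>2d$ I would pick a vertex $v$ of $C_n$ and use the decomposition from the proof of Lemma \ref{lemneghbhd}:
\[
BI_d(C_n)\simeq\hocolim\left(BI_d(P_{n-1})\longhookleftarrow\mathrm{lk}(v)\longrightarrow*\right).
\]
Since $P_{n-1}$ is a connected chordal graph, Proposition \ref{propchordlbid} gives $BI_d(P_{n-1})\simeq*$, so $BI_d(C_n)\simeq\Sigma\,\mathrm{lk}(v)$. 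The task then reduces to showing $\mathrm{lk}(v)\simeq\mathbb{S}^{2d-4}$.

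\emph{The link.} A set $\sigma\subseteq V(P_{n-1})$ lies in the link exactly when $\alpha(C_n[\sigma\cup\{v\}])\leq d-1$. On a path, the effect of adding $v$ depends only on how $\sigma$ meets the two endpoints $v\pm1$, which are the neighbours of $v$. So I would cover the link by subcomplexes of the form $BI$ of path pieces, with the endpoint conditions recorded, in the spirit of Lemma \ref{lemunionorder} and Theorem \ref{thdjntunion2bid}. Equivalently, the link is the set of $\sigma$ with $\alpha(P_{n-1}[\sigma])\leq d-1$ such that, whenever equality holds, no maximum independent set of $\sigma$ avoids both $v-1$ and $v+1$.

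An alternative that is more in the style of this paper is induction on $n$. One takes a vertex $w$ whose neighbourhood satisfies a domination condition so that Lemma \ref{lemneghbhd} applies. In $C_n$ no such $w$ exists, but in the link the domination is relative to $\sigma$, which is exactly how Lemma \ref{lemneghbhd}(1) is proved. The aim is to show $\mathrm{lk}_{BI_d(C_n)}(v)\simeq\mathrm{lk}_{BI_d(C_{n-1})}(v)$ for $n-1\geq2d$, and thereby reduce to the base case $n=2d$, where the link is visibly $\mathbb{S}^{2d-4}$. I expect this step, the homotopy type of the link, to be the main obstacle.

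\emph{Dualizing.} With $BI_d(C_n)\simeq\mathbb{S}^{2d-3}$, Theorem \ref{dualidadalexander} gives $\tilde H^{q}(\Delta_d^t(C_n))\cong\mathbb{Z}$ for $q=n-2d$ and $0$ otherwise. Since there is no torsion, the universal coefficient theorem shows that the homology is also $\mathbb{Z}$ in degree $n-2d$ and $0$ elsewhere.

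For $n\geq2d+2$, Theorem \ref{theoconngirth} applies with $k=n-2d\geq2$ (the girth of $C_n$ is $n\geq2d$), so $\Delta_d^t(C_n)$ is $(n-2d-1)$-connected, and in particular simply connected. Theorem \ref{homttyphomlgrp} then yields $\Delta_d^t(C_n)\simeq\mathbb{S}^{n-2d}$.

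The small cases are checked directly:
\begin{itemize}
\item For $n=2d$, the complex $\Delta_d^t(C_{2d})$ consists of the two vertices given by the two parity classes, i.e.\ $\mathbb{S}^0$.
\item For $n=2d+1$, Theorem \ref{theoconngirth} gives connectedness, and a connected complex with the homology of $\mathbb{S}^1$ need not be a circle. Here I would instead observe that the facets are the $2d+1$ complements of the cyclically consecutive maximum independent sets, and verify directly that their nerve is a cycle, giving $\mathbb{S}^1$ by Theorem \ref{nrvthm}.
\end{itemize}
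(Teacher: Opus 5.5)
The weak point is the step you yourself flag as the main obstacle: it is never carried out, and the proposal does not prove the statement.

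\textbf{The key step is missing.} The reduction $BI_d(C_n)\simeq\Sigma\,\mathrm{lk}(v)$ is correct. So is the dualization for $n\geq 2d+2$ (Alexander duality, universal coefficients, Theorem~\ref{theoconngirth}, Theorem~\ref{homttyphomlgrp}). But you never show $\mathrm{lk}(v)\simeq\mathbb{S}^{2d-4}$, i.e.\ $BI_d(C_n)\simeq\mathbb{S}^{2d-3}$ for $n>2d$. Your first idea stops at a description of the link. For the inductive idea you do not exhibit a vertex and a relative domination giving $\mathrm{lk}_{BI_d(C_n)}(v)\simeq\mathrm{lk}_{BI_d(C_{n-1})}(v)$. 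As written, you have only shown that $BI_d(C_n)\simeq\mathbb{S}^{2d-3}$ implies the statement.

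You cannot borrow Corollary~\ref{corhomtybidcn} either. The paper takes this theorem from \citep{totcutcompl} without proof and derives that corollary from it, so using it would be circular.

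The missing input does exist in the paper in a non-circular form: the colouring argument of Theorem~\ref{cyclepwerbid} with $r=p=1$.
\begin{enumerate}
\item Cut $C_n$ into $2d$ nonempty runs of consecutive vertices and send each vertex to the index of its run.
\item A face of $BI_d(C_n)$ cannot meet all even runs or all odd runs. One vertex from each would be an independent $d$-set, since runs $t$ and $t+2$ are separated by the nonempty run $t+1$. So this is a simplicial map to $BI_d(C_{2d})=\partial\Delta^{V_0}*\partial\Delta^{V_1}$.
\item The preimage of every simplex of the basis-like cover is $BI_d$ of a path, or of a disjoint union of at most $d-1$ paths. These are contractible by Proposition~\ref{propchordlbid} and Theorem~\ref{theodiscnntbid}.
\item Theorem~\ref{theopweakeqv} then gives $BI_d(C_n)\simeq\mathbb{S}^{2d-3}$ for $d\geq 3$.
\end{enumerate}
The case $d=2$ is immediate, since $BI_2(C_n)$ is the cycle itself. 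None of this uses the theorem being proved, and your dualization then finishes $n\geq 2d+2$.

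\textbf{The case $n=2d+1$ also fails as proposed.} The facets are $(d+1)$-subsets of a $(2d+1)$-set, so any two meet. The nerve therefore has complete $1$-skeleton and is not a cycle.

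In fact, relabel $x\mapsto 2^{-1}x \bmod (2d+1)$. The facets become the $2d+1$ cyclic intervals $I_a=\{a,\dots,a+d\}$. A family of such intervals has a common point exactly when their left endpoints lie in one interval of length $d+1$. So the nerve is isomorphic to the complex itself, and the Nerve Theorem tells you nothing.

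One way to finish is by collapses in the interval picture:
\begin{itemize}
\item For intervals of length $m$ with $3\leq m\leq d+1$, the endpoint pair $\{a,a+m-1\}$ lies in no length-$m$ interval other than $I_a$, because $2m<2d+3$.
\item Deleting, for every $a$, all faces between this pair and $I_a$ is therefore a collapse. It lands exactly on the complex generated by intervals of length $m-1$.
\item Iterating down to $m=2$ leaves the cycle graph, so $\Delta_d^t(C_{2d+1})\simeq\mathbb{S}^1$.
\end{itemize}

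A minor point: for $n=2d$ the complex is two disjoint $(d-1)$-simplices, not two vertices, though your conclusion $\simeq\mathbb{S}^0$ is correct.
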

From last Theorem we obtain the following  corollary-- this result was obtained independently by a 
different method in \citep{zaare2026vertex}.
\begin{cor}\label{corhomtybidcn}
For $d\geq2$ and $n\geq2d$
$$BI_d(C_n)\simeq\mathbb{S}^{2d-3}$$
\end{cor}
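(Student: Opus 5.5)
The plan is to pass from the known homotopy type of $\Delta_d^t(C_n)$ (Theorem \ref{totcomcycles}) to $BI_d(C_n)$ through Alexander duality, and then upgrade the resulting homology statement to a homotopy equivalence.

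\textbf{Homology.} By Theorem \ref{totcomcycles} we have $\Delta_d^t(C_n)\simeq\mathbb{S}^{n-2d}$, and $BI_d(C_n)^*=\Delta_d^t(C_n)$ on the vertex set $\underline{n}$. The duality $\tilde{H}_q(BI_d(G))\cong\tilde{H}^{n-q-3}(\Delta_d^t(G))$ recorded after the definitions gives
$$\tilde{H}_q(BI_d(C_n))\cong\tilde{H}^{n-q-3}(\mathbb{S}^{n-2d}).$$
This group is $\mathbb{Z}$ exactly when $n-q-3=n-2d$, that is $q=2d-3$, and it vanishes otherwise. Because the cohomology of a sphere is torsion free, no torsion can appear. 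So $BI_d(C_n)$ has the integral homology of $\mathbb{S}^{2d-3}$.

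\textbf{Simple connectivity for $d\geq3$.} Since $C_n$ is connected, Proposition \ref{propconnbi} shows $BI_3(C_n)$ is simply connected. For $d\geq4$ the same proposition gives $\mathrm{conn}(BI_d(C_n))\geq d-3\geq1$. With simple connectivity in hand, Theorem \ref{homttyphomlgrp} applied with $a=1$ yields $BI_d(C_n)\simeq\mathbb{S}^{2d-3}$.

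\textbf{The case $d=2$.} Here $BI_2(C_n)$ is the clique complex of $C_n$. For $n\geq4$ the cycle has no triangles, so this complex is $C_n$ itself, viewed as a $1$-dimensional complex, and hence homeomorphic to $\mathbb{S}^1=\mathbb{S}^{2d-3}$. This needs no homotopy-theoretic input at all.

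\textbf{Main obstacle.} The only real difficulty would be simple connectivity, and it is already supplied by Proposition \ref{propconnbi}. What remains is to check that the duality indices come out as computed.
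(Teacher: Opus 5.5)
Your proposal is correct and follows the paper's proof: the case $d=2$ is immediate because $C_n$ has no triangles, and for $d\geq3$ you combine Alexander duality with Theorem \ref{totcomcycles} to get the homology of $\mathbb{S}^{2d-3}$, use Proposition \ref{propconnbi} for simple connectivity, and conclude with Theorem \ref{homttyphomlgrp}. Splitting the simple-connectivity step into $d=3$ (connectedness of $C_n$) and $d\geq4$ (the bound $\mathrm{conn}\geq d-3$) only makes explicit what the paper leaves implicit.
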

\begin{proof}
For $d=2$ the result is trivial, as there are no triangles. 
By Proposition \ref{propconnbi} $BI_d(C_n)$ is simply connected for any $d\geq3$. By Theorem 
\ref{totcomcycles} and Theorem \ref{dualidadalexander} we have that $BI_d(C_n)$ has the homology of a $(2d-3)$-dimensional sphere. By 
Theorem \ref{homttyphomlgrp} we get that $BI_d(C_n)\simeq\mathbb{S}^{2d-3}$.
\end{proof}

\begin{figure}
\centering
\subfigure[Coloring $\psi_{3,12}$]{\begin{tikzpicture}[line cap=round,line join=round,>=triangle 45,x=0.5cm,y=0.5cm]
\clip(2,0.) rectangle (10,8.7);
\draw[color=blue] (5,1)-- (7,1);
\draw (7,1)-- (8.73,2);
\draw[color=orange] (8.73,2)-- (9.73,3.73);
\draw (9.73,3.73)-- (9.73,5.73);
\draw[color=green] (9.73,5.73)-- (8.73,7.46);
\draw (8.73,7.46)-- (7,8.46);
\draw[color=red] (7,8.46)-- (5,8.46);
\draw (5,8.46)-- (3.27,7.46);
\draw[color=yellow] (3.27,7.46)-- (2.27,5.73);
\draw (2.27,5.73)-- (2.27,3.73);
\draw[color=brown] (2.27,3.73)-- (3.27,2);
\draw (3.27,2)-- (5,1);
\draw (3.27,7.46)-- (2.27,3.73);
\draw (2.27,5.73)-- (3.27,2);
\draw (2.27,3.73)-- (5,1);
\draw (3.27,2)-- (7,1);
\draw (5,1)-- (8.73,2);
\draw (7,1)-- (9.73,3.73);
\draw (8.73,2)-- (9.73,5.73);
\draw (9.73,3.73)-- (8.73,7.46);
\draw (9.73,5.73)-- (7,8.46);
\draw (8.73,7.46)-- (5,8.46);
\draw (7,8.46)-- (3.27,7.46);
\draw (5,8.46)-- (2.27,5.73);
\begin{scriptsize}
\fill [color=blue] (5,1) circle (1.5pt);
\fill [color=blue] (7,1) circle (1.5pt);
\fill [color=orange] (8.73,2) circle (1.5pt);
\fill [color=orange] (9.73,3.73) circle (1.5pt);
\fill [color=green] (9.73,5.73) circle (1.5pt);
\fill [color=green] (8.73,7.46) circle (1.5pt);
\fill [color=red] (7,8.46) circle (1.5pt);
\fill [color=red] (5,8.46) circle (1.5pt);
\fill [color=yellow] (3.27,7.46) circle (1.5pt);
\fill [color=yellow] (2.27,5.73) circle (1.5pt);
\fill [color=brown] (2.27,3.73) circle (1.5pt);
\fill [color=brown] (3.27,2) circle (1.5pt);
\end{scriptsize}
\end{tikzpicture}}
\subfigure[Coloring $\psi_{3,15}$]{\begin{tikzpicture}[line cap=round,line join=round,>=triangle 45,x=0.8cm,y=0.8cm]
\clip(5,0.5) rectangle (10,6);
\draw[color=blue] (7,1)-- (8,1);
\draw[color=blue] (8,1)-- (8.91,1.41);
\draw (8.91,1.41)-- (9.58,2.15);
\draw[color=orange] (9.58,2.15)-- (9.89,3.1);
\draw[color=orange] (9.89,3.1)-- (9.79,4.1);
\draw (9.79,4.1)-- (9.29,4.96);
\draw[color=green] (9.29,4.96)-- (8.48,5.55);
\draw[color=green] (8.48,5.55)-- (7.5,5.76);
\draw (7.5,5.76)-- (6.52,5.55);
\draw[color=red] (6.52,5.55)-- (5.71,4.96);
\draw (5.71,4.96)-- (5.21,4.1);
\draw[color=yellow] (5.21,4.1)-- (5.11,3.1);
\draw (5.11,3.1)-- (5.42,2.15);
\draw[color=brown] (5.42,2.15)-- (6.09,1.41);
\draw (6.09,1.41)-- (7,1);
\draw (6.52,5.55)-- (5.21,4.1);
\draw (5.71,4.96)-- (5.11,3.1);
\draw (5.21,4.1)-- (5.42,2.15);
\draw (5.11,3.1)-- (6.09,1.41);
\draw (5.42,2.15)-- (7,1);
\draw (6.09,1.41)-- (8,1);
\draw[color=blue] (7,1)-- (8.91,1.41);
\draw (8,1)-- (9.58,2.15);
\draw (8.91,1.41)-- (9.89,3.1);
\draw[color=orange] (9.58,2.15)-- (9.79,4.1);
\draw (9.89,3.1)-- (9.29,4.96);
\draw (9.79,4.1)-- (8.48,5.55);
\draw[color=green] (9.29,4.96)-- (7.5,5.76);
\draw (8.48,5.55)-- (6.52,5.55);
\draw (7.5,5.76)-- (5.71,4.96);
\begin{scriptsize}
\fill [color=blue] (7,1) circle (1.5pt);
\fill [color=blue] (8,1) circle (1.5pt);
\fill [color=blue] (8.91,1.41) circle (1.5pt);
\fill [color=orange] (9.58,2.15) circle (1.5pt);
\fill [color=orange] (9.89,3.1) circle (1.5pt);
\fill [color=orange] (9.79,4.1) circle (1.5pt);
\fill [color=green] (9.29,4.96) circle (1.5pt);
\fill [color=green] (8.48,5.55) circle (1.5pt);
\fill [color=green] (7.5,5.76) circle (1.5pt);
\fill [color=red] (6.52,5.55) circle (1.5pt);
\fill [color=red] (5.71,4.96) circle (1.5pt);
\fill [color=yellow] (5.21,4.1) circle (1.5pt);
\fill [color=yellow] (5.11,3.1) circle (1.5pt);
\fill [color=brown] (5.42,2.15) circle (1.5pt);
\fill [color=brown] (6.09,1.41) circle (1.5pt);
\end{scriptsize}
\end{tikzpicture}}
\subfigure[Coloring  $\psi_{4,17}$]{\begin{tikzpicture}[line cap=round,line join=round,>=triangle 45,x=0.75cm,y=0.75cm]
\clip(1.5,0.5) rectangle (7.5,7);
\draw (5.93,1.36)-- (6.67,2.03);
\draw (7.12,2.93)-- (7.21,3.93);
\draw (6.94,4.89)-- (6.33,5.69);
\draw (5.48,6.21)-- (4.5,6.4);
\draw (3.52,6.21)-- (2.67,5.69);
\draw (2.06,4.89)-- (1.79,3.93);
\draw (1.88,2.93)-- (2.33,2.03);
\draw (3.07,1.36)-- (4,1);
\draw (4,1)-- (2.33,2.03);
\draw (3.07,1.36)-- (1.88,2.93);
\draw (2.33,2.03)-- (1.79,3.93);
\draw (5,1)-- (3.07,1.36);
\draw (1.88,2.93)-- (2.06,4.89);
\draw (1.79,3.93)-- (2.67,5.69);
\draw (2.06,4.89)-- (3.52,6.21);
\draw (2.67,5.69)-- (4.5,6.4);
\draw (3.52,6.21)-- (5.48,6.21);
\draw (4.5,6.4)-- (6.33,5.69);
\draw (5.48,6.21)-- (6.94,4.89);
\draw (6.33,5.69)-- (7.21,3.93);
\draw (6.94,4.89)-- (7.12,2.93);
\draw (7.21,3.93)-- (6.67,2.03);
\draw (7.12,2.93)-- (5.93,1.36);
\draw (6.67,2.03)-- (5,1);
\draw [color=blue] (4,1)-- (5.93,1.36);
\draw [color=blue] (5.93,1.36)-- (5,1);
\draw [color=blue] (5,1)-- (4,1);
\draw [color=orange] (6.67,2.03)-- (7.12,2.93);
\draw [color=green] (7.21,3.93)-- (6.94,4.89);
\draw [color=red] (6.33,5.69)-- (5.48,6.21);
\draw [color=yellow] (4.5,6.4)-- (3.52,6.21);
\draw [color=brown] (2.67,5.69)-- (2.06,4.89);
\draw [color=violet] (1.79,3.93)-- (1.88,2.93);
\draw [color=lime] (2.33,2.03)-- (3.07,1.36);
\begin{scriptsize}
\fill [color=blue] (4,1) circle (1.5pt);
\fill [color=blue] (5,1) circle (1.5pt);
\fill [color=blue] (5.93,1.36) circle (1.5pt);
\fill [color=orange] (6.67,2.03) circle (1.5pt);
\fill [color=orange] (7.12,2.93) circle (1.5pt);
\fill [color=green] (7.21,3.93) circle (1.5pt);
\fill [color=green] (6.94,4.89) circle (1.5pt);
\fill [color=red] (6.33,5.69) circle (1.5pt);
\fill [color=red] (5.48,6.21) circle (1.5pt);
\fill [color=yellow] (4.5,6.4) circle (1.5pt);
\fill [color=yellow] (3.52,6.21) circle (1.5pt);
\fill [color=brown] (2.67,5.69) circle (1.5pt);
\fill [color=brown] (2.06,4.89) circle (1.5pt);
\fill [color=violet] (1.79,3.93) circle (1.5pt);
\fill [color=violet] (1.88,2.93) circle (1.5pt);
\fill [color=lime] (2.33,2.03) circle (1.5pt);
\fill [color=lime] (3.07,1.36) circle (1.5pt);
\end{scriptsize}
\end{tikzpicture}}
\caption{Examples of $\psi_{d,n}$ in Theorem \ref{cyclepwerbid}}\label{colorinfpsi}
\end{figure}
Now we proceed to calculate the homotopy type of the bounded independence complex for cycle powers.
\begin{theorem}\label{cyclepwerbid}
For $d\geq3$, $n\geq(2r)d$ and $1\leq p\leq r$
$$BI_d(C_n^p)\simeq\mathbb{S}^{2d-3}$$
and each inclusion $BI_d(C_n)\longhookrightarrow BI_d(C_n^2)\longhookrightarrow\cdots\longhookrightarrow BI_d(C_n^r)$ is a homotopy equivalence.
\end{theorem}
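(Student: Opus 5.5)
The plan is to build an explicit simplicial map from $BI_d(C_n^p)$ onto a model of $\mathbb{S}^{2d-3}$, and then show it is a weak equivalence using Theorem \ref{theopweakeqv}. The colorings $\psi_{d,n}$ of Figure \ref{colorinfpsi} supply the map. Since $n\geq 2rd$, we can split the cycle $1,2,\dots,n$ into $2d$ consecutive arcs (blocks) $B_1,\dots,B_{2d}$, each with at least $r$ vertices. Let $\psi=\psi_{d,n}:V(C_n)\to V(C_{2d})$ send each vertex of $B_j$ to $j$.

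First I check that $\psi$ induces a simplicial map $p:BI_d(C_n^p)\to BI_d(C_{2d})$. Recall from the proof of the corollary after Theorem \ref{theoconngirth} that $BI_d(C_{2d})=\partial\Delta^U*\partial\Delta^W\cong\mathbb{S}^{2d-3}$, where the only independent $d$-sets of $C_{2d}$ are the odd indices and the even indices. Suppose $\psi(\sigma)$ contains all odd indices. Pick one vertex of $\sigma$ in each odd block. Two such vertices are separated by a whole even block of at least $r\geq p$ vertices, so their distance in $C_n$ is at least $r+1>p$. Hence they form an independent $d$-set of $C_n^p$ inside $\sigma$. The same holds for the even indices. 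So if $\alpha(C_n^p[\sigma])<d$, then $\psi(\sigma)$ is a simplex of $BI_d(C_{2d})$.

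Next I use the basis-like cover $\mathcal{U}$ of $BI_d(C_{2d})$ consisting of all full simplices $\Delta^\tau$ with $\tau$ a face. This family is closed under non-empty intersections, and each member is contractible. For a face $\tau$, the preimage $p^{-1}(\Delta^\tau)$ is exactly $BI_d(H_\tau)$, where $H_\tau=C_n^p\bigl[\bigcup_{j\in\tau}B_j\bigr]$. Removing the blocks not in $\tau$ cuts the cycle into arcs. Each arc is a connected induced $p$th power of a path, because consecutive blocks are adjacent in $C_n^p$. If the cycle is not cut at all, $\tau$ would contain all of $U$, which is impossible, so $H_\tau$ is a disjoint union of $k$ powers of paths.

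The key point is that $k\leq d-1$. The $k$ components are separated by $k$ maximal gaps of omitted blocks, so $k\leq d$. Equality $k=d$ would force $\tau$ to be exactly all odd or all even indices, but such a set is not a face. Powers of paths are connected chordal graphs, so by Proposition \ref{propchordlbid} every $BI_l$ of each component is contractible. Theorem \ref{theodiscnntbid}(a), which uses $d\geq3$, then gives $BI_d(H_\tau)\simeq*$; the case $k=1$ is just Proposition \ref{propchordlbid}. Thus every restriction $p^{-1}(\Delta^\tau)\to\Delta^\tau$ is a weak equivalence between contractible spaces, and Theorem \ref{theopweakeqv} shows $p$ is a weak equivalence. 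Since both sides are CW complexes, $BI_d(C_n^p)\simeq\mathbb{S}^{2d-3}$.

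For the inclusions, the same block decomposition works for every $1\leq p\leq r$. If $p_q$ denotes the map for the $q$th power, then $p_{q+1}$ composed with $BI_d(C_n^q)\hookrightarrow BI_d(C_n^{q+1})$ equals $p_q$. Both $p_q$ and $p_{q+1}$ are weak equivalences, so by two-out-of-three each inclusion is a weak equivalence, hence a homotopy equivalence.

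I expect the main obstacle to be the component count $k\leq d-1$ together with the identification $p^{-1}(\Delta^\tau)=BI_d(H_\tau)$. These need care for faces that omit many blocks. The block sizes also need checking: blocks with at least $r$ vertices are exactly what makes both the simpliciality argument and the connectivity of consecutive blocks work for every $p\leq r$.
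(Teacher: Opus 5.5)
Your proposal is correct and is essentially the paper's proof. It uses the same block coloring $\psi_{d,n}$ into $2d$ consecutive classes of at least $r$ vertices, giving a simplicial map onto $BI_d(C_{2d})=\partial\Delta^{V_0}*\partial\Delta^{V_1}$, and the same basis-like cover by full simplices, whose preimages are $BI_d$ of disjoint unions of at most $d-1$ powers of paths (contractible by Proposition~\ref{propchordlbid} and Theorem~\ref{theodiscnntbid}). It then concludes the same way, via Theorem~\ref{theopweakeqv} and two-out-of-three on the commuting triangle for the inclusions.

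Your alternating-runs count for $k\leq d-1$ is a cleaner version of the paper's one-line justification. One correction to your last paragraph: consecutive blocks are adjacent regardless of their size. What the block size $\geq r\geq p$ actually buys in that step is that distinct arcs are nonadjacent and that each arc induces $P_m^p$, with no wrap-around edges.
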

\begin{proof}
The idea is to construct a map $\psi_{d,n}:\underline{n}\longrightarrow\underline{2d}$ such that for each $1\leq p\leq r$, this map will give us a simplcial map 
$BI_d(C_n^p)\longrightarrow BI_d(C_{2d})$ which is a homotopy equivalence. Assuming we have shown that the simplicial maps are homotopy equivalences, we get the following commutative diagram
\begin{equation*}
\xymatrix{
BI_d(C_n) \ar@{^(->}[rr]\ar@{->}_\simeq[rrrrd]&& BI_d(C_n^2)\ar@{^(->}[rr]\ar@{->}_\simeq[rrd]&&\cdots\ar@{^(->}[rr]&& BI_d(C_n^r)\ar@{->}^\simeq[lld]\\
&&&&BI(C_{2d})&&&&}
\end{equation*}
where all the downward arrows are homotopy equivalences, thus the inclusions also are homotopy equivalences.

Now, we define the function. We take $\psi_{d,n}:\underline{n}\longrightarrow\underline{2d}$
where:
$$\psi_{d,n}(i)=\left\lbrace\begin{array}{cc}
  t  & \mbox{ if } (l+1)(t-1)< i\leq(l+1)t \mbox{ and } 1\leq t\leq k\\
  &\\
  t+k & \mbox{ if } (l+1)k+l(t-1)<i\leq(l+1)k+lt \mbox{ and } 1\leq t\leq 2d-k
\end{array}\right.$$
where $n=l(2d)+k$ and $0\leq k\leq 2d-1$. The idea of the map is a coloring in which each chromatic class have at least $l\geq r$ consecutive vertices in the cycle (see Figure \ref{colorinfpsi}), 
the first $k$ classes will have $l+1$ vertices and the remaining will have $l$ vertices-- truly the only important thing is that each chromatic class have at least $r$ consecutive vertices.
Now, taking $V_0=\{2,\dots,2d\}$ and $V_1=\{1,\dots,2d-1\}$ we have that
$$BI_d(C_{2d})=\partial\Delta^{V_0}*\partial\Delta^{V_1}$$
and the maximal simplicies of $BI_d(C_{2d})$ are of the form 
$$\sigma_{_{i,j}}=\left(V_0-\{i\}\right)\cup\left(V_1-\{j\}\right)$$
with $i$ in $V_0$ and $j$ in $V_1$.
We take $1\leq p\leq r$  and $BI_d(C_n^p)$. Now we will see that $\psi_{d,n}$ define a simplicial map, for this 
we will show that for any $\sigma$ in $BI_d(C_n^p)$, $\left|\bigcup_{i\in\sigma}\psi_{d,n}(i)\cap V_j\right|<d$ for $j=0,1$. Assume this is not true and with out loss of generality assume 
$\left|\bigcup_{i\in\sigma}\psi_{d,n}(i)\cap V_0\right|=d$. We take a vertex $w_t$ in $\psi_{d,n}^{-1}(t)\cap\sigma$ 
for each $t$ in $V_0$, then $\{w_2,w_4,\dots,w_{2d}\}$ is an independent set of cardinality $d$ because $d_{C_n}(w_{t},w_{t+2})\geq r+1\geq p+1$ for all $t$ in $V_0$, but this can not happen. 
Therefore the image of any simplex of $BI_d(C_n^p)$ is a simplex and
$\psi_{d,n}$ define a simplicial map which we will denote the same. 
Now, we take $\mathcal{U}_{_{i,j}}=\Delta^{\sigma_{_{i,j}}}$ and for $S$ a non-empty subset of $V_0\times V_1$ we take
$$\mathcal{U}_{_{S}}=\bigcap_{(i,j)\in S}\mathcal{U}_{_{i,j}}.$$
We take $\mathcal{U}$ as the family of all the non-empty intersections $\mathcal{U}_{_{S}}$. 
Notice that $\psi_{d,n}^{-1}(\mathcal{U}_{_{S}})$ is isomorphic to $BI_d(H)$ where $H$ is either the $p$th power of a path or, because in any $\mathcal{U}_{_{S}}$ 
we have to erase at least one color class of $V_0$ and 
one of $V_1$, the disjoint union of at most $d-1$ $p$th powers of paths.
By Theorem \ref{theodiscnntbid}, we have that $\psi_{d,n}^{-1}(\mathcal{U}_{_{S}})$ is contractible, thus the restriction 
$$\psi_{d,n}|_{_{\psi_{d,n}^{-1}(\mathcal{U}_{_{S}})}}:\psi_{d,n}^{-1}(\mathcal{U}_{_{S}})\longrightarrow\mathcal{U}_{_{S}}$$
is a homotopy equivalence. Therefore $\psi_{d,n}$ is a homotopy equivalence by Theorem \ref{theopweakeqv}.
\end{proof}

A natural question is if the remaining cases also have the homotopy type of a sphere, next two theorems will show that the answer is yes for: all $n$ and $r=2$; for $(r+1)d\leq n\leq 2rd-1$ 
and $d\geq3$. For this we also will give a simplicial map $BI_d(C_n^2)\longrightarrow\partial\Delta^{p}$ for some $p$. Notice that in the proof of Theorem \ref{cyclepwerbid} we have shown that 
the family of subcomplexes of the subgraphs induced by all the colors but two give us a cover with nerve isomorphic to $BI_d(C_{2d}^p)$ and this cover achieves the hypothesis of the Nerve Theorem \ref{nrvthm}. 
In the following 
two theorems we will see that the colorings $BI_d(C_n^r)\longrightarrow\partial\Delta^{p}$ are homotopy equivalences by given a base-like cover for $\partial\Delta^p$, this again will give us a cover of $BI_d$ such 
that it is homotopy equivalent to the nerve of the covering-- in these cases the covering will by given by erasing one chromatic class at the time.

\begin{figure}
\centering
\subfigure[Coloring $\psi_{3,11}$]{\begin{tikzpicture}[line cap=round,line join=round,>=triangle 45,x=0.5cm,y=0.5cm]
\clip(0.9,0.5) rectangle (9.1,8.5);
\draw (6,1)-- (7.68,2.08);
\draw (8.51,3.9)-- (8.23,5.88);
\draw (6.92,7.39)-- (5,7.96);
\draw (3.08,7.39)-- (1.77,5.88);
\draw (1.77,5.88)-- (1.49,3.9);
\draw (1.49,3.9)-- (2.32,2.08);
\draw (2.32,2.08)-- (4,1);
\draw (2.32,2.08)-- (1.77,5.88);
\draw (2.32,2.08)-- (6,1);
\draw (4,1)-- (7.68,2.08);
\draw (1.49,3.9)-- (4,1);
\draw (6,1)-- (8.51,3.9);
\draw (7.68,2.08)-- (8.23,5.88);
\draw (8.51,3.9)-- (6.92,7.39);
\draw (8.23,5.88)-- (5,7.96);
\draw (6.92,7.39)-- (3.08,7.39);
\draw (5,7.96)-- (1.77,5.88);
\draw (3.08,7.39)-- (1.49,3.9);
\draw [color=blue] (4,1)-- (6,1);
\draw [color=orange] (7.68,2.08)-- (8.51,3.9);
\draw [color=green] (8.23,5.88)-- (6.92,7.39);
\draw [color=red] (5,7.96)-- (3.08,7.39);
\begin{scriptsize}
\fill [color=blue] (4,1) circle (1.5pt);
%\draw[color=blue] (4,0.7) node {$1$};
\fill [color=blue] (6,1) circle (1.5pt);
%\draw[color=blue] (6,0.7) node {$2$};
\fill [color=orange] (7.68,2.08) circle (1.5pt);
%\draw[color=orange] (8,2) node {$3$};
\fill [color=orange] (8.51,3.9) circle (1.5pt);
%\draw[color=orange] (8.8,3.9) node {$4$};
\fill [color=green] (8.23,5.88) circle (1.5pt);
%\draw[color=green] (8.45,6.1) node {$5$};
\fill [color=green] (6.92,7.39) circle (1.5pt);
%\draw[color=green] (7.2,7.61) node {$6$};
\fill [color=red] (5,7.96) circle (1.5pt);
%\draw[color=red] (5.13,8.25) node {$7$};
\fill [color=red] (3.08,7.39) circle (1.5pt);
%\draw[color=red] (2.9,7.61) node {$8$};
\fill [color=yellow] (1.77,5.88) circle (1.5pt);
%\draw[color=yellow] (1.6,6.1) node {$9$};
\fill [color=brown] (1.49,3.9) circle (1.5pt);
%\draw[color=brown] (1.1,3.9) node {$10$};
\fill [color=violet] (2.32,2.08) circle (1.5pt);
%\draw[color=violet] (2,2) node {$11$};
\end{scriptsize}
\end{tikzpicture}\label{colpt}}
\subfigure[Coloring $\psi_{4,15}$]{\begin{tikzpicture}[line cap=round,line join=round,>=triangle 45,x=0.5cm,y=0.5cm]
\clip(-0.5,0.5) rectangle (10.5,11.5);
\draw (6,1)-- (7.83,1.81);
\draw (9.17,3.3)-- (9.78,5.2);
\draw (9.57,7.19)-- (8.57,8.92);
\draw (6.96,10.1)-- (5,10.51);
\draw (3.04,10.1)-- (1.43,8.92);
\draw (0.43,7.19)-- (0.22,5.2);
\draw (2.17,1.81)-- (4,1);
\draw [color=blue] (4,1)-- (6,1);
\draw [color=red] (7.83,1.81)-- (9.17,3.3);
\draw [color=green] (9.78,5.2)-- (9.57,7.19);
\draw [color=red] (8.57,8.92)-- (6.96,10.1);
\draw [color=yellow] (5,10.51)-- (3.04,10.1);
\draw [color=brown] (1.43,8.92)-- (0.43,7.19);
\draw (0.43,7.19)-- (0.83,3.3);
\draw (0.83,3.3)-- (4,1);
\draw (2.17,1.81)-- (6,1);
\draw (4,1)-- (7.83,1.81);
\draw (6,1)-- (9.17,3.3);
\draw (7.83,1.81)-- (9.78,5.2);
\draw (9.17,3.3)-- (9.57,7.19);
\draw (9.78,5.2)-- (8.57,8.92);
\draw (9.57,7.19)-- (6.96,10.1);
\draw (8.57,8.92)-- (5,10.51);
\draw (6.96,10.1)-- (3.04,10.1);
\draw (5,10.51)-- (1.43,8.92);
\draw (3.04,10.1)-- (0.43,7.19);
\draw (1.43,8.92)-- (0.22,5.2);
\draw [color=violet] (0.22,5.2)-- (2.17,1.81);
\draw [color=violet] (0.22,5.2)-- (0.83,3.3);
\draw [color=violet] (0.83,3.3)-- (2.17,1.81);
\begin{scriptsize}
\fill [color=blue] (4,1) circle (1.5pt);
%\draw[color=blue] (4,0.7) node {$1$};
\fill [color=blue] (6,1) circle (1.5pt);
%\draw[color=blue] (6.13,0.7) node {$2$};
\fill [color=orange] (7.83,1.81) circle (1.5pt);
%\draw[color=orange] (8,1.5) node {$3$};
\fill [color=orange] (9.17,3.3) circle (1.5pt);
%\draw[color=orange] (9.46,3.52) node {$4$};
\fill [color=green] (9.78,5.2) circle (1.5pt);
%\draw[color=green] (10,5.41) node {$5$};
\fill [color=green] (9.57,7.19) circle (1.5pt);
%\draw[color=green] (9.8,7.41) node {$6$};
\fill [color=red] (8.57,8.92) circle (1.5pt);
%\draw[color=red] (8.8,9.14) node {$7$};
\fill [color=red] (6.96,10.1) circle (1.5pt);
%\draw[color=red] (7.2,10.31) node {$8$};
\fill [color=yellow] (5,10.51) circle (1.5pt);
%\draw[color=yellow] (5,10.9) node {$9$};
\fill [color=yellow] (3.04,10.1) circle (1.5pt);
%\draw[color=yellow] (2.7,10.31) node {$10$};
\fill [color=brown] (1.43,8.92) circle (1.5pt);
%\draw[color=brown] (1.1,9.14) node {$11$};
\fill [color=brown] (0.43,7.19) circle (1.5pt);
%\draw[color=brown] (0.1,7.41) node {$12$};
\fill [color=violet] (0.22,5.2) circle (1.5pt);
%\draw[color=violet] (-0.15,5.41) node {$13$};
\fill [color=violet] (0.83,3.3) circle (1.5pt);
%\draw[color=violet] (0.45,3.52) node {$14$};
\fill [color=violet] (2.17,1.81) circle (1.5pt);
%\draw[color=violet] (2,1.5) node {$15$};
\end{scriptsize}
\end{tikzpicture}\label{colptpt}}
\caption{Examples of $\psi_{d,n}$ in Theorem \ref{bidcn2}}
\end{figure}

\begin{theorem}\label{bidcn2}
For $d\geq3$ and $3d\leq n\leq4d-1$ we have that 
$$BI_d(C_n^2)\simeq\mathbb{S}^{3d-i-4},$$
where $n=3d+i$ for $0\leq i\leq d-1$.
\end{theorem}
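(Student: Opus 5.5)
The plan follows the strategy of Theorem \ref{cyclepwerbid}, but uses a different target. Write $n=3d+i$ and $c=3d-i-2$. I will build a coloring $\psi_{d,n}:\underline{n}\longrightarrow\underline{c}$ of $C_n^2$ whose chromatic classes are sets of consecutive vertices of the cycle, each of size $1$, $2$ or $3$ (as in the figures). I will then show that it induces a simplicial map
$$\psi_{d,n}:BI_d(C_n^2)\longrightarrow\partial\Delta^{\underline{c}}\cong\mathbb{S}^{c-2}=\mathbb{S}^{3d-i-4}$$
which is a homotopy equivalence.

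\textbf{Step 1: the coloring and the simplicial map.} Since $\Delta^{\underline{c}}$ contains every set of colors, $\psi_{d,n}$ automatically maps simplices to simplices of the full simplex. The only condition for landing in $\partial\Delta^{\underline{c}}$ is that no simplex of $BI_d(C_n^2)$ meets every chromatic class. Equivalently, every transversal of the classes must contain an independent set of $C_n^2$ of size $d$, that is, $d$ vertices pairwise at cyclic distance at least $3$.

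The sizes of the classes are forced by the counting. The sizes must sum to $n=3d+i$ over $c$ classes, so there are $2i+2$ vertices in excess of $c$ to distribute; for $i=0$ this gives mostly singletons and two pairs. I will place the larger classes so that for any choice of representatives $w_1,\dots,w_c$ (in cyclic order) one can greedily pick every second or third representative and obtain $d$ vertices pairwise at distance $\geq3$. This is the analogue of the argument using $w_2,w_4,\dots,w_{2d}$ in Theorem \ref{cyclepwerbid}. I expect to fix the placement by an explicit recipe: the pairs first, then the singletons or triples, and then check the greedy selection case by case in $i$.

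\textbf{Step 2: the basis-like cover.} For $\emptyset\neq S\subsetneq\underline{c}$ take $\mathcal{U}_S=\Delta^{\underline{c}-S}$. These simplices, together with all their non-empty intersections, form a basis-like cover of $\partial\Delta^{\underline{c}}$, and each member is contractible. The preimage $\psi_{d,n}^{-1}(\mathcal{U}_S)$ is $BI_d(H_S)$, where $H_S$ is the subgraph of $C_n^2$ induced by deleting the classes in $S$. When a class of at least $2$ consecutive vertices is deleted, $H_S$ is a disjoint union of squares of paths, and the number of components is at most the number of deleted blocks.

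For the contractibility of these preimages:
\begin{itemize}
\item if $H_S$ is connected, it is a square of a path, hence chordal, and Proposition \ref{propchordlbid} gives $BI_d(H_S)\simeq*$;
\item if $H_S$ has at most $d-1$ components, Theorem \ref{theodiscnntbid}(a) gives $BI_d(H_S)\simeq*$;
\item if $H_S$ has at least $d$ components, I will check that $H_S$ has independence number $<d$, so that $BI_d(H_S)$ is a full simplex. This is also where the coloring must be chosen so that deleting many classes cannot leave $d$ far-apart vertices without contradicting Step 1.
\end{itemize}
Once every preimage is contractible, Theorem \ref{theopweakeqv} shows that $\psi_{d,n}$ is a weak equivalence, hence a homotopy equivalence.

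\textbf{Main obstacle.} The hardest part is the deletion of a single singleton class. Removing one vertex $v$ from $C_n^2$ does not give a path square, because $v-1$ and $v+1$ stay adjacent. The resulting graph may fail to be chordal, so Proposition \ref{propchordlbid} does not apply directly.

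I would first try Lemma \ref{lemneghbhd}(1). If it finds a vertex whose closed neighborhood is contained in another's, repeated dominated-vertex deletions should reduce this graph to a path square, giving contractibility. If that fails, I would instead redesign the coloring so that each singleton class is adjacent to a class of size at least $2$, and use the combined deletion. As a last resort, I would compute the homology of $BI_d(C_n^2)$ through Alexander duality. Then Proposition \ref{propconnbi} (which makes $BI_d(C_n^2)$ simply connected for $d\geq3$) together with Theorem \ref{homttyphomlgrp} would finish the proof, provided the homology is that of $\mathbb{S}^{3d-i-4}$.
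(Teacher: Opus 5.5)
Your architecture matches the paper's: a coloring with $3d-i-2$ classes inducing $BI_d(C_n^2)\to\partial\Delta^{3d-i-3}$, the cover by faces, and Theorem \ref{theopweakeqv}. However, the two places where the real work lies are either not done or would fail as proposed.

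\textbf{The coloring and the component count.} Step 1 is only announced. You give neither the coloring nor the check that every transversal contains $d$ vertices pairwise at cyclic distance $\geq 3$. The paper needs a specific placement: $2(i+1)$ pair classes followed by singletons for $3d+1\leq n\leq 4d-2$, and $2d-2$ pairs plus one triple for $n=4d-1$. It then needs a genuine case analysis to verify the transversal condition.

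More seriously, your third bullet in Step 2 cannot be carried out. If $H_S$ has $k\geq d$ components, one vertex from each already gives an independent set of size $d$, so $\alpha(H_S)\geq d$ automatically. The components are connected chordal graphs, since $H_S$ is then an induced subgraph of $P_{n-2}^2$. So Theorem \ref{theodiscnntbid}(b) gives $BI_d(H_S)\simeq\bigvee_{\binom{k-1}{d-1}}\mathbb{S}^{d-2}$, which is not contractible, and Theorem \ref{theopweakeqv} cannot be applied.

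This case must therefore be excluded by the design of the coloring, and Step 1 does not exclude it. For $n=3d$, every coloring with $3d-2$ classes satisfies Step 1. But if both pairs avoid the residue class $\{1,4,\dots,3d-2\}$, deleting all other classes leaves $d$ isolated vertices. The paper rules this case out with the class-adjacency graph $W$. It has one vertex per class, and two classes are adjacent when an edge of $C_n^2$ joins them. One class from each component of $H_S$ is an independent set of $W$, and the paper shows $\alpha(W)\leq d-1$.

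\textbf{Deleting singleton classes.} Your plan for the main obstacle does not work. When only non-adjacent singleton classes are deleted, repeated use of Lemma \ref{lemneghbhd}(1) ends not in a path square but in an induced cycle $C_q$ that still winds around the cycle. Contractibility then comes only from the count $q<2d$, which makes $BI_d(C_q)$ a full simplex, and that count needs the upper bound on $n$.

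It fails at the top of the range. For $n=4d-1$, delete $v$ and then $v+2,v+4,\dots,v+4d-4$ in turn. At each step $N[v+2k]\subseteq N[v+2k+1]$, so Lemma \ref{lemneghbhd}(1) applies, and an induced $C_{2d}$ remains. By Corollary \ref{corhomtybidcn}, $BI_d(C_{4d-1}^2-v)\simeq BI_d(C_{2d})\simeq\mathbb{S}^{2d-3}$. So no coloring with a singleton class works at $n=4d-1$, which is why the paper uses $2d-2$ pairs and a triple there.

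For $n\leq 4d-2$, singletons are forced, since $2(3d-i-2)>3d+i$. The cover contains $\mathcal U_S$ for every single class $S$, so redesigning the coloring to use only combined deletions is impossible. The Alexander-duality fallback is circular: no independent computation of $\tilde H_*(\Delta_d^t(C_n^2))$ is available, and the paper derives it from this theorem.

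The missing argument is the paper's. If two adjacent singletons are deleted, $H$ lies inside $P_{n-2}^2$ and is chordal. Otherwise, dominated-vertex removal reduces $H$ to a cycle $C_q$, and one must prove $q<2d$ using $n\leq 4d-2$.
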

\begin{proof}
For $n=3d$, $C_{3d}^2$ only has $3$ independent sets of size $d$ and these sets are disjoint, thus 
$$BI_d(C_{3d}^2)\cong\bigast_{3}\partial\Delta^{d-1}\cong\mathbb{S}^{3d-4}.$$
Assume $3d+1\leq n\leq 4d-1$. We take $p=3d-i-3$ where  $n=3d+i$ for $1\leq i\leq d-1$. Now we will define a map  $\psi_{d,n}:BI_d(C_n^2)\longrightarrow\partial\Delta^{p}$.
\begin{itemize}
    \item[$(\cdot)$] For $3d+1\leq n\leq 4d-2$ we have that $n=(3d-i-2)+2(i+1)$. We define
    $$\psi_{d,n}(j)=\left\lbrace\begin{array}{cc}
      \left\lceil\frac{j}{2}\right\rceil-1   &  \mbox{if } 1\leq j\leq4(i+1) \\
      &\\
       \left\lceil\frac{2j-4(i+1)}{2}\right\rceil-1  &  \mbox{if } j\geq4(i+1)+1
    \end{array}\right..$$
    \item[$(\cdot\cdot)$] For $n=4d-1$ we have that $n=2(2d-1)+1$. We define
    $$\psi_{d,n}(j)=\left\lbrace\begin{array}{cc}
      \left\lceil\frac{j}{2}\right\rceil-1   &  \mbox{if } 1\leq j\leq2(2d-1) \\
      &\\
      2d-2  &  \mbox{if } j=4d-1 
    \end{array}\right..$$
\end{itemize}
In both cases we are coloring the vertices of $C_n^2$ with $p+1$ colors. In the case $(\cdot)$ the first $2(i+1)$ chromatic classes have two consecutive vertices and the rest of the chromatic classes only have one 
vertex (see Figure \ref{colpt}). In the case $(\cdot\cdot)$ the first $2d-3$ chromatic classes have two consecutive members and the last one has three consecutive vertices (see Figure \ref{colptpt}). 
The only possible obstruction to these colorings to be simplicial maps is if there is a simplex with all the colors. 
We now will see that any vertex set with one color for each chromatic class induces a graph with independent number at least $d$.
For the colorings of type $(\cdot)$, let $\tau$ be a vertex set with one vertex for each color.
We take the set $S=[p]-[2i+1]$. Notice that all the chromatic classes of the colors in $S$ have only one vertex, thus 
$\psi_{d,n}^{-1}(S)=\{4i+5,\dots,3d+i\}\subseteq\tau$. Assume that in $\psi_{d,n}^{-1}(\{i+1,\dots,2i+1\})$ there is an odd vertex and let 
$i+1\leq j\leq2i+1$ the biggest color for which this happens. Then $2j+1$ is in $\tau$ and neither $2j+2$ or $2k+1$ are in $\tau$ for 
$j+1\leq k\leq 2i+1$. There are two possibilities:
\begin{itemize}
    \item[(a)] $j\equiv_20$. Then we take the vertices in $\tau$ with even color $k$ for $0\leq k\leq j$ and the vertices with odd color 
    $l$ for $j+1\leq l\leq2i+1$. These vertices form an independent set of cardinality $i+2$. We also take the vertices $4i+4+3t$ for 
    $1\leq t\leq d-i-2$, these vertices form an independent set of cardinality $d-i-2$. The union of these vertex sets form an independent set of cardinality $d$.
    \item[(b)] $j\equiv_21$. Then we take take the vertices in $\tau$ with odd color $k$ for $0\leq k\leq j$ and the vertices with even color 
    $l$ for $j+1\leq l\leq2i+1$. These vertices form an independent set of cardinality $i+2$ if $j\leq2i-1$ or of cardinality $i+1$ if $j=2i+1$. 
    For the first case we take the vertices $4i+5+3t$ for $1\leq t\leq d-i-2$ and for the second case the vertices $4i+3+3t$ for 
    $1\leq t\leq d-i-1$. Regardless of the case, the union of these vertex sets form an independent set of cardinality $d$.
\end{itemize}
Assume that for any color $i+1\leq j\leq2i+1$ the vertex $2j+2$ is in $\tau$. If $2j+2$ is in $\tau$ for some $0\leq j\leq i$, the we can 
find an independent set of cardinality $d$ contained in $\tau$ with similar arguments as before. Assume that for any color $i+1\leq j\leq2i+1$ 
the vertex $2j+1$ is in $\tau$. From this, the graph $C_n^2[\tau]$ looks like the graph in figure \ref{cn2tau}. Now, we will see that $\alpha(C_n^2[\tau])\geq d$. There are two possibilities:
\begin{itemize}
    \item[(a)] If $i\equiv_20$, we take the set
    $$\{4i+4+3t\}_{t=0}^{d-i-1}\cup\{2j+1:\;j=2l\;\mbox{ and }\;1\leq j\leq i\}\cup\{2j+2:\;j=2l+1\;\mbox{ and }\;i+1\leq j\leq2i\}$$
    where $3q+i+1=1$. 
    \item[(b)] $i\equiv_21$, we take the set
    $$\{4i+5+3t\}_{t=0}^{d-i-2}\cup\{2j+1:\;j=2l+1\;\mbox{ and }\;1\leq j\leq i\}\cup\{2j+2:\;j=2l\;\mbox{ and }\;i+1\leq j\leq2i\}.$$
\end{itemize}
In any case, the set taken is an independent set of cardinality $d$. 

Next we show that for colorings of type $(\cdot\cdot)$, any vertex set 
of cardinality $2d-1$ with all the colors induce a graph with independent number 
at least $d$. Let $\tau$ be a vertex set with one vertex for each color and take $H=C_{4d-1}^2[\tau]$. Because $H$ has order $2d-1$, if $H$ is a forest, then 
$\alpha(H)\geq\left\lceil\frac{2d-1}{2}\right\rceil=d$. We now will show that $H$ does not have a cycle. 
In $C_n^2$, for a vertex of color $j\neq2d-2$ the neighbors in a different chromatic class are of color $j-1$ or $j+1$, and 
for the color $2d-2$, the vertex $4d-2$ is the only vertex in the chromatic class that has neighbors with colors $0$ and $2d-3$. Therefore $0\leq\delta(H)\leq\Delta(H)\leq2$ as $H$ has only one vertex for each color.
Also, the only possible cycle in $H$ is of order $2d-1$ if $H\cong C_{2d-1}$. Assume that, $H\cong C_{2d-1}$. Then $4d-2$ is in $\tau$ and $N_H(4d-2)=\{1,4d-4\}$. We take 
$l=\min\{j:\;0\leq j\leq2d-3\mbox{ and }2j+2\in\tau\}$, notice that $1\leq l\leq2d-3$ because $1$ and $4d-4$ are in $\tau$. Then $2l-1$ and $2l+2$ are in $H$, but this vertices are not adjacent. Therefore $H$ is 
a forest. 

From all this we get that $BI_d(C_n^2)$ has not a simplex with all the colors regardless of the coloring.

\begin{figure}
\centering
\subfigure[{$C_n^2\left[\tau\right]$}]{\begin{tikzpicture}[line cap=round,line join=round,>=triangle 45,x=1.0cm,y=1.0cm]
\clip(2.5,1.5) rectangle (9.5,5.5);
\draw (5,2)-- (4,2);
\draw (4,2)-- (3,3);
\draw (5,2)-- (3,3);
\draw (3,3)-- (3,4);
\draw (7,2)-- (8,2);
\draw (8,2)-- (9,3);
\draw (9,3)-- (7,2);
\draw (9,3)-- (9,4);
\draw [dash pattern=on 5pt off 5pt] (3,4)-- (5,5);
\draw [dash pattern=on 5pt off 5pt] (9,4)-- (7,5);
\draw [dash pattern=on 5pt off 5pt] (5,2)-- (7,2);
\begin{scriptsize}
\fill [color=black] (3,3) circle (1.5pt);
\draw[color=black] (2.85,3) node {$1$};
\fill [color=black] (4,2) circle (1.5pt);
\draw[color=black] (4.16,1.85) node {$3d+i$};
\fill [color=black] (5,2) circle (1.5pt);
\draw[color=black] (5.2,2.28) node {$3d+i-1$};
\fill [color=black] (7,2) circle (1.5pt);
\draw[color=black] (7,2.28) node {$4i+6$};
\fill [color=black] (8,2) circle (1.5pt);
\draw[color=black] (8.14,1.8) node {$4i+5$};
\fill [color=black] (9,3) circle (1.5pt);
\draw[color=black] (8.5,3.05) node {$4i+4$};
\fill [color=black] (3,4) circle (1.5pt);
\draw[color=black] (2.85,4) node {$3$};
\fill [color=black] (9,4) circle (1.5pt);
\draw[color=black] (8.5,3.95) node {$4i+2$};
\fill [color=black] (5,5) circle (1.5pt);
\draw[color=black] (5.2,4.8) node {$2i+1$};
\fill [color=black] (7,5) circle (1.5pt);
\draw[color=black] (6.75,4.8) node {$2i+4$};
\end{scriptsize}
\end{tikzpicture}\label{cn2tau}}
\subfigure[Graph $W$ for colorings of type $(\cdot)$]{\begin{tikzpicture}[line cap=round,line join=round,>=triangle 45,x=2cm,y=2cm]
\clip(0.75,1) rectangle (4,2.75);
\draw (1.5,2.5)-- (1,2);
\draw (1,2)-- (1.25,1.5);
\draw (1.25,1.5)-- (1.75,1.25);
\draw (1.75,1.25)-- (1,2);
\draw (3.25,1.5)-- (2.75,1.25);
\draw (3.5,2)-- (3.25,1.5);
\draw (3.5,2)-- (2.75,1.25);
\draw [dash pattern=on 1pt off 1pt] (1.75,1.25)-- (2.75,1.25);
\draw [dash pattern=on 1pt off 1pt] (1.5,2.5)-- (3,2.5);
\draw (3.5,2)-- (3,2.5);
%\draw [shift={(1.5,0.5)},dash pattern=on 1pt off 1pt]  plot[domain=1.08:1.82,variable=\t]({1*1*cos(\t r)+0*1*sin(\t r)},{0*1*cos(\t r)+1*1*sin(\t r)});
%\draw [shift={(3,0.5)},dash pattern=on 1pt off 1pt]  plot[domain=1.33:2.11,variable=\t]({1*1.03*cos(\t r)+0*1.03*sin(\t r)},{0*1.03*cos(\t r)+1*1.03*sin(\t r)});
\begin{scriptsize}
\fill [color=black] (1,2) circle (1.5pt);
\draw[color=black] (1,2.12) node {$1$};
\fill [color=black] (1.5,2.5) circle (1.5pt);
\draw[color=black] (1.56,2.63) node {$2$};
\fill [color=black] (3,2.5) circle (1.5pt);
\draw[color=black] (3.14,2.63) node {$2i+1$};
\fill [color=black] (3.5,2) circle (1.5pt);
\draw[color=black] (3.65,2.12) node {$2i+2$};
\fill [color=black] (3.25,1.5) circle (1.5pt);
\draw[color=black] (3.40,1.4) node {$2i+3$};
\fill [color=black] (2.75,1.25) circle (1.5pt);
\draw[color=black] (2.9,1.15) node {$2i+4$};
\fill [color=black] (1.25,1.5) circle (1.5pt);
\draw[color=black] (1.10,1.4) node {$3d-i-2$};
\fill [color=black] (1.75,1.25) circle (1.5pt);
\draw[color=black] (1.6,1.15) node {$3d-i-3$};
\end{scriptsize}
\end{tikzpicture}\label{W}}
\caption{Auxiliary graph used in Theorem \ref{bidcn2}}
\end{figure}

Now we will see that in both cases any subgraph induced by at most 
$p$ colors have a contractible bounded independence complex. For this, first we will see that these subgraphs have less than $d$ connected components. If we take a set of colors $L=\{i_1,\dots,i_l\}$ such that 
$G[\psi_{d,n}^{-1}(L)]$ has $l$ connected components, notice that for each color $i_j$ in $L$ the colors $i_j-1,i_j+1$ are not in $L$-- where the sum is taken mod $p$. Even more, if the chromatic class of $i_j$ only has one vertex, then $i_j+2$ or $i_j-2$ or both are not in $L$. We take construct a graph $W$ with a vertex for each chromatic class of the coloring and add an edge between two vertices if between the corresponding chromatic 
classes there is an edge in $C_n^2$. Now, the disconnected subgraphs with one chromatic class for each connected component are in bijection with the independent sets of $W$, thus if we show that 
$\alpha(W)<d$, we have that $G[\psi_{d,n}^{-1}(L)]$ has less than $d$ connected components for any $L\in\mathcal{P}([p])-\{\emptyset,[p]\}$. We have two cases:
\begin{itemize}
    \item[$(\cdot)$] For the colorings of type $(\cdot)$, because of how we defined the coloring, we can take $\underline{p+1}$ as the vertex set of $W$ where the first $2(i+1)$ numbers 
    are the chromatic classes with $2$ vertices and the remaining are the chromatic classes with only one vertex. The obtained graph $W$ can be constructed by taking the graphs 
    $T_1=P_{2(i+1)},\;T_2\cong P_{3d-3i-2}^2$ and identifying the extremes of these paths (see Figure \ref{W}).  Taking $\sigma$ an independent set of size $\alpha(W)$, we have three possibilities: 
\begin{itemize}
    \item[(a)] Both $1$ and $2i+2$ are in $\sigma$. Then 
    $$|\sigma|\leq2+\alpha(T_1-N_{T_1}[1]-N_{T_2}[2i+2])+\alpha(T_2-N_{T_2}[1]-N_{T_2}[2i+2])$$
    $$=2+\left\lceil\frac{2i-2}{2}\right\rceil+\left\lceil\frac{3d-3i-8}{3}\right\rceil=d-1$$
    \item[(b)] Only one of $1$ and $2i+2$ are in $\sigma$. Assume $1$ is i $\sigma$ and $2i+2$ is not, then
    $$|\sigma|\leq1+\alpha(T_1-N_{T_1}[1]-(2i+2))+\alpha(T_2-N_{T_2}[1]-(2i+2))$$
    $$=1+\left\lceil\frac{2i-1}{2}\right\rceil+\left\lceil\frac{3d-3i-6}{3}\right\rceil=d-1$$
    \item[(c)] Non of $1$ and $2i+2$ are in $\sigma$. Then
    $$|\sigma|\leq\alpha(T_1-1-(2i+2))+\alpha(T_2-1-(2i+2))$$
    $$=\left\lceil\frac{2i}{2}\right\rceil+\left\lceil\frac{3d-3i-4}{3}\right\rceil=d-1$$
\end{itemize}
\item[$(\cdot\cdot)$] For the colorings of type $(\cdot\cdot)$ we have that $W\cong C_{2d-1}$ and we know that $\alpha(C_{2d-1})=d-1$.
\end{itemize}
Therefore $G[\psi_{d,n}^{-1}(L)]$ has less that $d$ connected components for any $L\in\mathcal{P}([p])-\{\emptyset,[p]\}$ for both types of colorings.

For $L\in\mathcal{P}([p])-\{\emptyset,[p]\}$, we take $H=G[\psi_{d,n}^{-1}(L)]$. We will now show that $BI_d(H)\simeq*$.
If $H$ has at least $2$ connected components, then $H$ is the disjoint union of powers of paths and, by Theorem \ref{theodiscnntbid}, we have that $BI_d(H)\simeq*$. 
Assume $H$ is connected. If $r=|\psi_{d,n}^{-1}(x)|\geq2$ for some $x$ in $[p]-L$, then $H$ is an induced subgraph of a graph isomorphic to $P_{n-r}^2$ and $H$ is a chordal graph. 
By Proposition \ref{propchordlbid} we have that $BI_d(H)\simeq*$. Assume that $|\psi_{d,n}^{-1}(x)|=1$ for all $x$ in $[p]-L$, notice that this implies that the coloring is of type $(\cdot)$. 
If there are two consecutive colors in $[p]-L$, then $H$ is an induced subgraph of a graph isomorphic to $P_{n-2}^2$. 
Assume there are not consecutive colors in $[p]-L$. Using Lemma \ref{lemneghbhd} we will show that $BI_d(H)\simeq BI_d(C_q)\simeq*$ where $q<2d$. 

Assume that $j,j+1,\dots,j+k\in V(H)$ and $j-1,j+k+1\notin V(H)$, where $k\geq2$. Notice that $N_H[j+1]\subseteq N_H[j+2]$ for all $k\geq2$, by Lemma \ref{lemneghbhd} we can always erase $j+1$ 
without changing the homotopy type. We affirm that we can erase $\left\lceil\frac{k-1}{2}\right\rceil$
vertices from $j,j+1,\dots,j+k$ without changing the homotopy type and in such a manner that the remaining vertices have degree $2$ in the new graph. For $k=2,3$, this is clear as we can erase $j+1$. 
Assume this is true for $4\leq l\leq k$. Assume $j,j+1,\dots,j+k+1\in V(H)$ and $j-1,j+k+1\notin V(H)$. Again, we can erase $j+1$ without changing the homotopy 
type. Then we have that $(j+2),\dots,(j+2)+(k-1)\in V(H)$ and $(j+2)-1,(j+2)+(k-1)+1\notin V(H)$, by inductive hypothesis we can erase 
$\left\lceil\frac{k-2}{2}\right\rceil+1=\left\lceil\frac{k-1}{2}\right\rceil$ vertices without changing the homotopy type and the remaining vertices have degree $2$ in the new graph.

Now, if $|[p]-L|=f$, then $H$ is obtained by erasing $f$ vertices such that any two are at distance at least two and the chromatic classes of these vertices have size $1$. These vertices also part the vertices of 
$H$ in $f$ parts of consecutive vertices as in the paragraph before. Let $n_1,\dots,n_f$ be the sizes of these parts. Then we can erase 
$$\sum_{j=1}^f\left\lceil\frac{n_j-1}{2}\right\rceil$$
vertices without changing the homotopy type and the resulting graph is a cycle of order 
$$q=3d+i-f-\sum_{j=1}^f\left\lceil\frac{n_j-1}{2}\right\rceil\leq3d+i-f-\sum_{j=1}^l\frac{n_j-1}{2}=\frac{3d+i-f}{2}\leq\frac{4d-3}{2}=2d-\frac{3}{2}<2d.$$
Therefore $BI_d(H)\simeq BI_d(C_q)\simeq*$.

Taking $\mathcal{U}=\{\Delta^{\sigma}:\;\sigma\in\partial\Delta^p-\{\emptyset\}\}$, by Theorem \ref{theopweakeqv} we have that the colorings are homotopy equivalences. 
\end{proof}

\begin{figure}
\centering
\subfigure[Coloring  $\psi_{3,16}$]{\begin{tikzpicture}[line cap=round,line join=round,>=triangle 45,x=0.5cm,y=0.5cm]
\clip(1.5,-0.5) rectangle (13.5,11.5);
\draw (10.85,1.77)-- (12.26,3.18);
\draw (13.03,7.03)-- (12.26,8.88);
\draw (9,11.05)-- (7,11.05);
\draw (3.74,8.88)-- (2.97,7.03);
\draw (3.74,3.18)-- (5.15,1.77);
\draw (2.97,7.03)-- (5.15,10.29);
\draw (2.97,7.03)-- (7,11.05);
\draw (5.15,10.29)-- (2.97,5.03);
\draw (3.74,8.88)-- (2.97,5.03);
\draw (3.74,8.88)-- (3.74,3.18);
\draw (2.97,7.03)-- (5.15,1.77);
\draw (2.97,5.03)-- (5.15,1.77);
\draw (2.97,5.03)-- (7,1);
\draw (3.74,3.18)-- (7,1);
\draw (3.74,3.18)-- (9,1);
\draw (9,1)-- (12.26,3.18);
\draw (7,1)-- (12.26,3.18);
\draw (9,1)-- (13.03,5.03);
\draw (10.85,1.77)-- (13.03,5.03);
\draw (10.85,1.77)-- (13.03,7.03);
\draw (12.26,3.18)-- (12.26,8.88);
\draw (13.03,5.03)-- (12.26,8.88);
\draw (13.03,5.03)-- (10.85,10.29);
\draw (13.03,7.03)-- (10.85,10.29);
\draw (13.03,7.03)-- (9,11.05);
\draw (12.26,8.88)-- (7,11.05);
\draw (10.85,10.29)-- (5.15,10.29);
\draw (10.85,10.29)-- (7,11.05);
\draw (9,11.05)-- (5.15,10.29);
\draw (9,11.05)-- (3.74,8.88);
\draw [color=blue] (5.15,1.77)-- (9,1);
\draw [color=blue] (5.15,1.77)-- (10.85,1.77);
\draw [color=blue] (5.15,1.77)-- (7,1);
\draw [color=blue] (7,1)-- (10.85,1.77);
\draw [color=blue] (7,1)-- (9,1);
\draw [color=blue] (9,1)-- (10.85,1.77);
\draw [color=orange] (12.26,3.18)-- (13.03,5.03);
\draw [color=orange] (13.03,5.03)-- (13.03,7.03);
\draw [color=orange] (13.03,7.03)-- (12.26,3.18);
\draw [color=green] (12.26,8.88)-- (9,11.05);
\draw [color=green] (9,11.05)-- (10.85,10.29);
\draw [color=green] (10.85,10.29)-- (12.26,8.88);
\draw [color=red] (3.74,8.88)-- (5.15,10.29);
\draw [color=red] (5.15,10.29)-- (7,11.05);
\draw [color=red] (7,11.05)-- (3.74,8.88);
\draw [color=yellow] (2.97,7.03)-- (3.74,3.18);
\draw [color=yellow] (3.74,3.18)-- (2.97,5.03);
\draw [color=yellow] (2.97,5.03)-- (2.97,7.03);
\begin{scriptsize}
\fill [color=blue] (7,1) circle (1.5pt);
%\draw[color=blue] (7.16,0.68) node {$2$};
\fill [color=blue] (9,1) circle (1.5pt);
%\draw[color=blue] (9.16,0.68) node {$3$};
\fill [color=blue] (10.85,1.77) circle (1.5pt);
%\draw[color=blue] (11,1.45) node {$4$};
\fill [color=orange] (12.26,3.18) circle (1.5pt);
%\draw[color=orange] (12.58,3.18) node {$5$};
\fill [color=orange] (13.03,5.03) circle (1.5pt);
%\draw[color=orange] (13.38,5.03) node {$6$};
\fill [color=orange] (13.03,7.03) circle (1.5pt);
%\draw[color=orange] (13.38,7.03) node {$7$};
\fill [color=green] (12.26,8.88) circle (1.5pt);
%\draw[color=green] (12.42,9.16) node {$8$};
\fill [color=green] (10.85,10.29) circle (1.5pt);
%\draw[color=green] (11,10.56) node {$9$};
\fill [color=green] (9,11.05) circle (1.5pt);
%\draw[color=green] (9.12,11.34) node {$10$};
\fill [color=red] (7,11.05) circle (1.5pt);
%\draw[color=red] (6.74,11.34) node {$11$};
\fill [color=red] (5.15,10.29) circle (1.5pt);
%\draw[color=red] (4.8,10.56) node {$12$};
\fill [color=red] (3.74,8.88) circle (1.5pt);
%\draw[color=red] (3.48,9.16) node {$13$};
\fill [color=yellow] (2.97,7.03) circle (1.5pt);
%\draw[color=brown] (2.55,7.03) node {$14$};
\fill [color=yellow] (2.97,5.03) circle (1.5pt);
%\draw[color=brown] (2.55,5.03) node {$15$};
\fill [color=yellow] (3.74,3.18) circle (1.5pt);
%\draw[color=brown] (3.33,3.18) node {$16$};
\fill [color=blue] (5.15,1.77) circle (1.5pt);
%\draw[color=blue] (5.3,1.45) node {$1$};
\end{scriptsize}
\end{tikzpicture}}
\subfigure[Coloring $\psi_{4,23}$]{\begin{tikzpicture}[line cap=round,line join=round,>=triangle 45,x=0.4cm,y=0.4cm]
\clip(1,0.5) rectangle (17,16);
\draw[color=blue] (8,1)-- (10,1);
\draw[color=blue] (10,1)-- (11.93,1.54);
\draw[color=blue] (11.93,1.54)-- (13.63,2.58);
\draw (13.63,2.58)-- (15,4.04);
\draw[color=orange] (15,4.04)-- (15.92,5.82);
\draw[color=orange] (15.92,5.82)-- (16.33,7.77);
\draw[color=orange] (16.33,7.77)-- (16.19,9.77);
\draw (16.19,9.77)-- (15.52,11.65);
\draw[color=green] (15.52,11.65)-- (14.37,13.29);
\draw[color=green] (14.37,13.29)-- (12.82,14.55);
\draw (12.82,14.55)-- (10.98,15.35);
\draw[color=red] (10.98,15.35)-- (9,15.62);
\draw[color=red] (9,15.62)-- (7.02,15.35);
\draw (7.02,15.35)-- (5.18,14.55);
\draw[color=yellow] (5.18,14.55)-- (3.63,13.29);
\draw[color=yellow] (3.63,13.29)-- (2.48,11.65);
\draw (2.48,11.65)-- (1.81,9.77);
\draw[color=brown] (1.81,9.77)-- (1.67,7.77);
\draw[color=brown] (1.67,7.77)-- (2.08,5.82);
\draw (2.08,5.82)-- (3,4.04);
\draw[color=violet] (3,4.04)-- (4.37,2.58);
\draw[color=violet] (4.37,2.58)-- (6.07,1.54);
\draw (6.07,1.54)-- (8,1);
\draw (10,1)-- (15,4.04);
\draw (11.93,1.54)-- (15,4.04);
\draw (11.93,1.54)-- (15.92,5.82);
\draw (13.63,2.58)-- (15.92,5.82);
\draw (13.63,2.58)-- (16.33,7.77);
\draw (15.92,5.82)-- (15.52,11.65);
\draw (16.33,7.77)-- (15.52,11.65);
\draw (16.33,7.77)-- (14.37,13.29);
\draw (16.19,9.77)-- (14.37,13.29);
\draw (16.19,9.77)-- (12.82,14.55);
\draw (15.52,11.65)-- (10.98,15.35);
\draw (14.37,13.29)-- (10.98,15.35);
\draw (14.37,13.29)-- (9,15.62);
\draw (12.82,14.55)-- (9,15.62);
\draw (12.82,14.55)-- (7.02,15.35);
\draw (10.98,15.35)-- (5.18,14.55);
\draw (9,15.62)-- (5.18,14.55);
\draw (9,15.62)-- (3.63,13.29);
\draw (7.02,15.35)-- (3.63,13.29);
\draw (7.02,15.35)-- (2.48,11.65);
\draw (5.18,14.55)-- (1.81,9.77);
\draw (3.63,13.29)-- (1.81,9.77);
\draw (3.63,13.29)-- (1.67,7.77);
\draw (2.48,11.65)-- (1.67,7.77);
\draw (2.48,11.65)-- (2.08,5.82);
\draw (1.81,9.77)-- (3,4.04);
\draw (1.67,7.77)-- (4.37,2.58);
\draw (1.67,7.77)-- (3,4.04);
\draw (2.08,5.82)-- (4.37,2.58);
\draw (2.08,5.82)-- (6.07,1.54);
\draw (3,4.04)-- (8,1);
\draw (4.37,2.58)-- (8,1);
\draw (4.37,2.58)-- (10,1);
\draw (6.07,1.54)-- (10,1);
\draw (6.07,1.54)-- (11.93,1.54);
\draw [color=blue] (8,1)-- (13.63,2.58);
\draw [color=blue] (8,1)-- (11.93,1.54);
\draw [color=blue] (10,1)-- (13.63,2.58);
\draw [color=orange] (15,4.04)-- (16.19,9.77);
\draw [color=orange] (15,4.04)-- (16.33,7.77);
\draw [color=orange] (15.92,5.82)-- (16.19,9.77);
\draw [color=green] (15.52,11.65)-- (12.82,14.55);
\draw [color=red] (10.98,15.35)-- (7.02,15.35);
\draw [color=yellow] (5.18,14.55)-- (2.48,11.65);
\draw [color=brown] (1.81,9.77)-- (2.08,5.82);
\draw [color=violet] (3,4.04)-- (6.07,1.54);
\begin{scriptsize}
\fill [color=blue] (8,1) circle (1.5pt);
%\draw[color=blue] (8.16,1.28) node {$A$};
\fill [color=blue] (10,1) circle (1.5pt);
%\draw[color=blue] (10.16,1.28) node {$B$};
\fill [color=blue] (11.93,1.54) circle (1.5pt);
%\draw[color=blue] (12.08,1.82) node {$C$};
\fill [color=blue] (13.63,2.58) circle (1.5pt);
%\draw[color=blue] (13.8,2.86) node {$D$};
\fill [color=orange] (15,4.04) circle (1.5pt);
%\draw[color=orange] (15.14,4.32) node {$E$};
\fill [color=orange] (15.92,5.82) circle (1.5pt);
%\draw[color=orange] (16.06,6.1) node {$F$};
\fill [color=orange] (16.33,7.77) circle (1.5pt);
%\draw[color=orange] (16.48,8.06) node {$G$};
\fill [color=orange] (16.19,9.77) circle (1.5pt);
%\draw[color=orange] (16.36,10.04) node {$H$};
\fill [color=green] (15.52,11.65) circle (1.5pt);
%\draw[color=green] (15.64,11.94) node {$I$};
\fill [color=green] (14.37,13.29) circle (1.5pt);
%\draw[color=green] (14.46,13.56) node {$J$};
\fill [color=green] (12.82,14.55) circle (1.5pt);
%\draw[color=green] (12.96,14.84) node {$K$};
\fill [color=red] (10.98,15.35) circle (1.5pt);
%\draw[color=red] (11.12,15.62) node {$L$};
\fill [color=red] (9,15.62) circle (1.5pt);
%\draw[color=red] (9.18,15.9) node {$M$};
\fill [color=red] (7.02,15.35) circle (1.5pt);
%\draw[color=red] (7.18,15.62) node {$N$};
\fill [color=yellow] (5.18,14.55) circle (1.5pt);
%\draw[color=yellow] (5.34,14.84) node {$O$};
\fill [color=yellow] (3.63,13.29) circle (1.5pt);
%\draw[color=yellow] (3.78,13.56) node {$P$};
\fill [color=yellow] (2.48,11.65) circle (1.5pt);
%\draw[color=yellow] (2.64,11.94) node {$Q$};
\fill [color=brown] (1.81,9.77) circle (1.5pt);
%\draw[color=brown] (1.94,10.04) node {$R$};
\fill [color=brown] (1.67,7.77) circle (1.5pt);
%\draw[color=brown] (1.82,8.06) node {$S$};
\fill [color=brown] (2.08,5.82) circle (1.5pt);
%\draw[color=brown] (2.22,6.1) node {$T$};
\fill [color=violet] (3,4.04) circle (1.5pt);
%\draw[color=violet] (3.16,4.32) node {$U$};
\fill [color=violet] (4.37,2.58) circle (1.5pt);
%\draw[color=violet] (4.5,2.86) node {$V$};
\fill [color=violet] (6.07,1.54) circle (1.5pt);
%\draw[color=violet] (6.26,1.82) node {$W$};
\end{scriptsize}
\end{tikzpicture}}
\caption{Examples of $\psi_{d,n}$ in Theorem \ref{cyclpwonr}}\label{colr3d16}
\end{figure}

\begin{theorem}\label{cyclpwonr}
For $d\geq3$, $r\geq3$ and $2rd-(r-1)\leq n\leq2rd-1$ we have that 
$$BI_d(C_n^r)\simeq\mathbb{S}^{2d-3}.$$
\end{theorem}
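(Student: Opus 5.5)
We would again construct a coloring $\psi_{d,n}:\underline{n}\longrightarrow\underline{p+1}$ of $C_n^r$ into blocks of consecutive vertices, show it induces a simplicial map $BI_d(C_n^r)\longrightarrow\partial\Delta^{p}$, and check the preimage condition of Theorem \ref{theopweakeqv} for the basis-like cover $\mathcal{U}=\{\Delta^{\sigma}:\;\sigma\in\partial\Delta^p-\{\emptyset\}\}$. Since $\partial\Delta^{p}\cong\mathbb{S}^{p-1}$, we want $p=2d-2$, i.e. $2d-1$ colors. Write $n=2rd-j$ with $1\leq j\leq r-1$, so $n=(2d-1)r+(r-j)$. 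We color with $2d-1$ classes of consecutive vertices: one class of size $2r-j$ (between $r+1$ and $2r-1$) and $2d-2$ classes of size $r$ (as in Figure \ref{colr3d16}). Then two classes whose cyclic indices differ by at least $2$ are at distance greater than $r$ in $C_n$, so they are nonadjacent in $C_n^r$. Adjacent classes do share edges, and the graph $W$ on the classes (an edge when some edge of $C_n^r$ joins them) is exactly $C_{2d-1}$, with $\alpha(W)=d-1$.

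\emph{Step 1 (simpliciality).} We must show no simplex of $BI_d(C_n^r)$ meets all $2d-1$ classes. Let $\tau$ contain one vertex $w_t$ of each class, and let $H=C_n^r[\tau]$, a graph on $2d-1$ vertices. Each $w_t$ can only be adjacent to $w_{t\pm1}$, so $\Delta(H)\leq 2$. If $H$ is not the full cycle $C_{2d-1}$, it is a disjoint union of paths on $2d-1$ vertices, so $\alpha(H)\geq d$. It remains to rule out $H\cong C_{2d-1}$. Here we use the length count, as in case $(\cdot\cdot)$ of Theorem \ref{bidcn2}. Each consecutive pair $w_t,w_{t+1}$ is adjacent only if its forward gap in $C_n$ is at most $r$. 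Summing the $2d-1$ gaps around the cycle gives $n\leq(2d-1)r$, which contradicts $n\geq 2rd-(r-1)=(2d-1)r+1$. This is exactly where the lower bound on $n$ enters. So $\alpha(H)\geq d$ and $\psi_{d,n}$ is simplicial.

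\emph{Step 2 (preimages contractible).} For a proper nonempty set of colors $L$, the preimage of $\Delta^{L}$ is $BI_d(H_L)$ with $H_L=C_n^r[\psi^{-1}(L)]$. Since at least one whole class of $\geq r$ consecutive vertices is removed, $H_L$ is an induced subgraph of $P_{m}^r$ with consecutive vertex runs, hence a disjoint union of $r$th powers of paths. Its number of components is at most the number of maximal runs of colors in $L$ around the cycle of classes. Because consecutive runs are separated by missing colors and $L\neq[p]$, this is at most $\alpha(W)=d-1<d$. A connected component is a power of a path, hence chordal, and Proposition \ref{propchordlbid} shows $BI_l$ of it is contractible for all $l$. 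So Theorem \ref{theodiscnntbid}(a) gives $BI_d(H_L)\simeq*$, or Proposition \ref{propchordlbid} gives it directly if $H_L$ is connected. The same holds for every face in $\mathcal{U}$, so Theorem \ref{theopweakeqv} makes $\psi_{d,n}$ a weak equivalence, and $BI_d(C_n^r)\simeq\mathbb{S}^{2d-3}$.

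The main obstacle is Step 1, namely excluding the induced cycle $C_{2d-1}$ through the gap-counting argument. The remaining care is in making sure that separated classes really are nonadjacent in $C_n^r$ given the size of the big class; the component count in Step 2 is routine by comparison.
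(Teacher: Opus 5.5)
Your proposal is correct and follows the paper's proof essentially step for step. It uses the same coloring into $2d-1$ blocks of at least $r$ consecutive vertices, giving a simplicial map to $\partial\Delta^{2d-2}$; the same count ($n\le(2d-1)r$) to exclude an induced $C_{2d-1}$ on a heterochromatic set; and the same combination of $W\cong C_{2d-1}$, Theorem \ref{theodiscnntbid} and Theorem \ref{theopweakeqv}. The only difference is cosmetic and harmless: you put the $r-j$ extra vertices into one class of size $2r-j$ rather than the paper's $r-j$ classes of size $r+1$, and the argument only uses that every class contains at least $r$ consecutive vertices.
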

\begin{proof}
The proof is similar to the proof of Theorem \ref{bidcn2}. For $2rd-(r-1)\leq n\leq2rd-1$ we have that 
$n=r(2d-1)+i$ with $1\leq i\leq r-1$. We take $p=2d-2$ and define the coloring $\psi_{d,n}:BI_d(C_n^r)\longrightarrow\partial\Delta^p$ as:
$$\psi_{d,n}(j)=\left\lbrace\begin{array}{cc}
   \left\lceil\frac{j}{r+1}\right\rceil-1  & \mbox{ if } 1\leq j \leq(r+1)i\\
   &\\
    \left\lceil\frac{j-(r+1)i}{r}\right\rceil+i-1  & \mbox{ if } (r+1)i+1\leq j
\end{array}\right..$$
The idea of the coloring is that the first $i$ chromatic classes have $r+1$ vertices and the rest have $r$ (see Figure \ref{colr3d16}).

If a vertex $i$ has color $j$ then $N_{C_n^r}(i)\subseteq\psi_{d,n}^{-1}(\{j-1,j,j+1\})$. Any heterochromatic vertex set of size $2d-1$ induces a forest. Assume this is not true and let 
$\tau$ be a heterochromatic vertex set of size $2d-1$ such that $C_n^r[\tau]$ is not a forest. Then $C_n^r[\tau]\cong C_{2d-1}$ and $V(C_n^r)-\tau$ can be partition in $2d-1$ sets of consecutive vertices, each 
set of size at most $r-1$. Then
$$n\leq (r-1)(2d-1)+2d-1=2rd-r<2rd-(r-1)\leq n.$$
Therefore $C_n^r[\tau]$ is a forest and $\alpha(C_n^r[\tau])\geq d$. From this we get that the coloring defines a simplicial map. 

Now we will see that for any $S\in\mathcal{P}([p])-\{[p],\emptyset\}$, the subgraph $C_n^r[\psi_{d,n}^{-1}(S)]$ has less than $d$ connected components. We define a graph $W$ as the one in the proof of Theorem 
\ref{bidcn2} and again the independent sets of $W$ are in bijection with the disconnected subgraphs $C_n^r[\psi_{d,n}^{-1}(S)]$ with one color for each connected component. 
Notice that $W\cong C_{2d-1}$ and $\alpha(W)=d-1$. Therefore $C_n^r[\psi_{d,n}^{-1}(S)]$ has less than $d$ connected components for any $S\in\mathcal{P}([p])-\{[p],\emptyset\}$ and 
$BI_d(C_n^r[\psi_{d,n}^{-1}(S)])\simeq*$ by Theorem \ref{theodiscnntbid}, as each component is isomorphic to the $r$th power of a path.

Taking $\mathcal{U}=\{\Delta^{\sigma}:\;\sigma\in\partial\Delta^p-\{\emptyset\}\}$, by Theorem \ref{theopweakeqv} we have that the colorings are homotopy equivalences.
\end{proof}

\begin{theorem}\label{thmtotcyclpwrs}
(a) For $r\geq2$, $n\geq 2rd$ and $d\geq2$, we have that
$$\Delta_d^t(C_n^p)\simeq\mathbb{S}^{n-2d}$$
for any $1\leq p\leq r$.

(b) For $d\geq3$ and $3d\leq n\leq3d+i$ with $0\leq i\leq d-1$ we have that
$$\Delta_d^t(C_n^2)\simeq\mathbb{S}^{2i+1}.$$

(c) For $r\geq3$, $d\geq3$ and $2rd-(r-1)\leq n\leq2rd-1$ we have that
$$\Delta_d^t(C_n^p)\simeq\mathbb{S}^{n-2d}.$$
\end{theorem}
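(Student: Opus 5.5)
The plan is to move the homotopy type of $BI_d(C_n^p)$, already computed in Theorems \ref{cyclepwerbid}, \ref{bidcn2} and \ref{cyclpwonr}, across to $\Delta_d^t(C_n^p)=BI_d(C_n^p)^*$. We use Alexander duality (Theorem \ref{dualidadalexander}) for the homology, then show simple connectivity so that Theorem \ref{homttyphomlgrp} upgrades a homology sphere to an actual sphere.

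\textbf{Step 1 (homology).} Suppose $BI_d(C_n^p)\simeq\mathbb{S}^{m}$. Theorem \ref{dualidadalexander} gives $\tilde H_q(\Delta_d^t(C_n^p))\cong\tilde H^{n-q-3}(BI_d(C_n^p))$. The right side is $\mathbb{Z}$ exactly when $n-q-3=m$, and is zero otherwise. Cohomology of a sphere is torsion free, so there is no torsion to track. The three cases are:
\begin{itemize}
\item In (a) and (c), $m=2d-3$, so the only nonzero group is $\tilde H_{n-2d}\cong\mathbb{Z}$.
\item In (b), $n=3d+i$ and $m=3d-i-4$, so $q=n-3-m=2i+1$.
\item For $d=2$ in (a), $BI_2(C_n^p)$ is the clique complex. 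I would either note that the argument of Theorem \ref{cyclepwerbid} goes through verbatim for $d=2$, or invoke \citep{chauhantot2comcycl,shenhmtpcyclepwr}.
\end{itemize}
Theorem \ref{cyclepwerbid} is stated only for $d\ge3$. For $d=2$ its proof uses Theorem \ref{theodiscnntbid}, which needs $d\geq3$, so this case needs a separate check. There the preimages are powers of paths (contractible by Proposition \ref{propchordlbid}), or disjoint unions of at most one such graph. So the argument should still work, since with $d=2$ at most $d-1=1$ component appears.

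\textbf{Step 2 (simple connectivity).} I want $\mathrm{sk}_2\Delta_d^t(C_n^p)=\mathrm{sk}_2\Delta^{V}$. That is, for any three vertices $x,y,z$, the graph $C_n^p-\{x,y,z\}$ should contain an independent set of size $d$. Removing three vertices from the cycle leaves at most three arcs of consecutive vertices whose lengths sum to $n-3$. An arc of length $\ell$ in the $p$th power has independence number $\lceil \ell/(p+1)\rceil$. Summing over the arcs gives $\alpha\ge (n-3)/(p+1)$, and more precisely $\alpha\ge\lceil (n-3-2p)/(p+1)\rceil+\ldots$ after accounting for interactions across the removed vertices.
\begin{itemize}
\item In case (a), $n\ge 2rd\ge 2pd$. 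A direct count, placing vertices every $p+1$ steps while avoiding $x,y,z$, gives $d$ independent vertices when $n\ge (p+1)d+3p$ roughly. This holds when $d\ge3$, and when $r\ge2$ the slack $2pd-(p+1)d=(p-1)d$ generally suffices.
\item For the small cases ($p=1$ is Theorem \ref{totcomcycles}) and the tight cases in (b) and (c), I would check directly. For instance, in (b) with $n=3d$, removing $3$ vertices from $C_{3d}^2$ still leaves one of the three disjoint maximum independent sets intact only if the three vertices miss it. Since each of those sets has size $d$, one is missed unless each set loses one vertex.
\end{itemize}
This is where the argument may break. For $n=3d$ with $i=0$ the target is $\mathbb{S}^1$, which is not simply connected, so simple connectivity fails. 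There I would compute directly: $BI_d(C_{3d}^2)$ is a join of three boundaries, and its dual is determined by its three facet complements. Hence $\Delta_d^t(C_{3d}^2)$ is the complex with exactly three facets, the complements of the three independent sets, each pair intersecting and with empty triple intersection, so it is homotopy equivalent to a circle. Similarly, small-dimensional outputs $\mathbb{S}^{2i+1}$ with $i\ge1$ need $\pi_1=0$. For these I would use the triangle check: three removed vertices hit at most three of the color blocks, and one exhibits $d$ independent vertices among the remaining ones using the explicit sets from the proof of Theorem \ref{bidcn2}.

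\textbf{Step 3.} With homology $\tilde H_*\cong\tilde H_*(\mathbb{S}^k)$, $k\ge2$, and $\pi_1=0$, Theorem \ref{homttyphomlgrp} gives $\Delta_d^t\simeq\mathbb{S}^k$. When $k\le1$ (only $n=3d$ in (b), and possibly $n=2d$ in (a)), the direct descriptions above apply.

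The main obstacle is Step 2. The $2$-skeleton count must hold uniformly in $p\le r$, including the boundary values $n=2rd-(r-1)$ in (c) and $n=3d+1$ in (b). I would first try the greedy bound: $\alpha(C_n^p-\{x,y,z\})\ge\lfloor (n-3)/(p+1)\rfloor$, improved to the arc sum.
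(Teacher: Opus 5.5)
Your overall plan (homology of $\Delta_d^t$ from the $BI_d$ computations via Alexander duality, then $\pi_1=0$ and Theorem~\ref{homttyphomlgrp}) is the paper's. You get simple connectivity by a different route, and as written that step is not yet a proof.

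\textbf{How the paper does it.} The paper never looks at triples of vertices. It applies Proposition~\ref{homosusp} at the vertex $1$, after checking $\alpha(C_n^p-N[1])\ge d-1$ and that the intersection $X$ has a nonempty simplex. So $\Delta_d^t(C_n^p)\simeq\Sigma X$, and $\tilde H_0(X)\cong\tilde H_1(\Delta_d^t(C_n^p))=0$ because the only nonzero degree is $n-2d>1$ (resp.\ $2i+1\ge3$ in (b)). Hence $X$ is connected and $\Sigma X$ is simply connected. This recycles the homology, and the only combinatorics needed is that one inequality.

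\textbf{Your route and where it falls short.} Your full-$2$-skeleton route (the device of Lemma~\ref{theoconncyclepower}) is independent of the homology and proves the stronger fact that the $2$-skeleton is full. But the estimates you sketch do not deliver it.
\begin{itemize}
\item Summing $\lceil \ell/(p+1)\rceil$ over arcs gives an \emph{upper} bound when $p\ge2$, since vertices on either side of a removed vertex stay adjacent. Already $\alpha(C_{24}^4)=4<21/5$, which lies inside (a) with $r=4$, $d=3$.
\item The condition $n\ge(p+1)d+3p$ fails at $p=r=2$, $d=3$, $n=12$.
\item In (b) the colour-block idea breaks, because removed vertices can exhaust a colour class (type $(\cdot)$ has singleton classes). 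The argument in Theorem~\ref{bidcn2} needs one vertex of every colour.
\end{itemize}

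\textbf{How to close Step 2.} The claim is true, with a uniform proof. Put $g=n-(d-1)(p+1)$, and for $a\in\underline{n}$ let
$A_a=\{a,\,a+g,\,a+g+(p+1),\dots,a+g+(d-2)(p+1)\}$ (mod $n$).
\begin{itemize}
\item Its cyclic gaps are $g,p+1,\dots,p+1$. So when $n\ge(p+1)d$, $A_a$ is an independent set of size $d$ in $C_n^p$.
\item Each vertex lies in $A_a$ for exactly $d$ values of $a$.
\item Hence if also $n>3d$, any three vertices meet at most $3d<n$ of the $A_a$, and $\mathrm{sk}_2\Delta_d^t(C_n^p)=\mathrm{sk}_2\Delta^{\underline{n}}$.
\end{itemize}
Both inequalities hold in each case you need:
\begin{itemize}
\item in (a), since $n\ge2rd\ge\max\{4d,2pd\}$;
\item in (c) with $p=r$, since $n-(r+1)d\ge(r-1)(d-1)>0$;
\item in (b) for $i\ge1$.
\end{itemize}
At $i=0$ the count fails, as it must, and your direct nerve computation there is Proposition~\ref{propcr+1d}.

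\textbf{The case $d=2$.} Your remark that Theorem~\ref{cyclepwerbid} is stated only for $d\ge3$ is a fair catch, and the paper's proof of (a) glosses over it. For $d=2$ the homology follows from Proposition~\ref{propclcipow}, since $n\ge4r\ge3p+1$ gives $BI_2(C_n^p)\simeq\mathbb{S}^1$.
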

\begin{proof}
By Theorems \ref{cyclepwerbid}, \ref{bidcn2} and \ref{cyclpwonr}, in the three cases the complexes have the homology of a sphere. Therefore, by Theorems \ref{dualidadalexander} and \ref{homttyphomlgrp}, 
we only need to show that the compexes are simply connected in the three cases.

\textit{(a)} %By Theorems \ref{dualidadalexander}, \ref{homttyphomlgrp} and \ref{cyclepwerbid}, we only need to show that $\Delta_d^t(C_n^p)$ is simply connected. 
Notice that $C_n^p-N_{C_n^p}[1]\cong P_{n-2p-1}^p$ where $n-2p-1\geq2rd-2p-1\geq2r(d-1)-1$, thus
$$\alpha(C_n^p-N_{C_n^p}[1])\geq\left\lceil\frac{2r(d-1)-1}{r+1}\right\rceil\geq d-1.$$
By Proposition \ref{homosusp}
$$\Delta_d^t(C_n^p)\simeq\Sigma\left(\Delta_d^t(C_n^p-1)\cap\left(\Delta^{N_{C_n^p}[1]}*\Delta_{d-1}^t\left((C_n^p-N_{C_n}[1])\right)\right)\right)$$
Now, $S=\{2,3+p,\dots,d+1+(d-1)p\}$ is an independent set such that $S\cap N_{C_n^p}(1)=\{2\}$, thus there is a non-empty simplex in the intersection.
If the intersection is disconnected, then $\Delta_d^t(C_n^p)$ has the homotopy type of a wedge of spaces and at least one of these
spaces is a circle, thus $\tilde{H}_1(\Delta_d^t(C_n^p))\ncong0$. But this is not possible as by Alexander Duality and Theorem \ref{cyclepwerbid} the 
only non-trivial reduced homology group is for $n-2d\geq2d(r-1)>1$. Therefore the intersection is connected and $\Delta_d^t(C_n^p)$ 
is simply connected.

\textit{(b)} The case $n=3d$ is proved in Proposition \ref{propcr+1d}. For $3d+1\leq n\leq4d-1$, with the same arguments as before we can show that $\Delta_d^t(C_n^2)$ is 
simply connected. 
%By Theorems \ref{dualidadalexander}, \ref{homttyphomlgrp} and \ref{bidcn2} we have that 
%$\Delta_d^t(C_n^2)\simeq\mathbb{S}^{2i+1}.$

\textit{(c)} Notice that $C_n^r-N_{C_n^r}[1]\cong P_k^r$ where $k\geq2rd-(r-1)-(2r+1)=2r(d-1)-r$ and 
$$\alpha(P_k^r)\geq\left\lceil\frac{2r(d-1)-r}{r+1}\right\rceil=\left\lceil d-1+\frac{(r-1)(d-1)-r}{r+1}\right\rceil.$$
Because $d\geq3$ we have that $(r-1)(d-1)-r>0$, thus $\alpha(P_k^r)\geq d-1$.  We can proceed as in point (a).
\end{proof}

\begin{prop}\label{propcr+1d}
If $n=(r+1)d$, then $\Delta_d^t\left(C_{n}^r\right)\simeq\mathbb{S}^{r-1}$
\end{prop}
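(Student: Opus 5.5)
Since $\alpha(C_n^r)=\lfloor n/(r+1)\rfloor=d$ when $n=(r+1)d$, I would first pin down exactly which independent sets have size $d$. Each gap between consecutive chosen vertices around the cycle must be at least $r+1$, and there are $d$ gaps summing to $(r+1)d$. Hence every gap is exactly $r+1$. So the maximum independent sets are precisely the $r+1$ residue classes
\[
S_a=\{j\in\underline{n}:\; j\equiv a \pmod{r+1}\},\qquad a=1,\dots,r+1,
\]
and these are pairwise disjoint and partition $V(C_n^r)$. Consequently a set $\sigma$ lies in $\Delta_d^t(C_n^r)$ if and only if $\sigma\cap S_a=\emptyset$ for some $a$. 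Equivalently, the facets of $\Delta_d^t(C_n^r)$ are the $r+1$ simplices $V-S_a$.

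I would then compute the homotopy type with the Nerve Theorem \ref{nrvthm} applied to the cover $\{\Delta^{V-S_a}\}_{a=1}^{r+1}$. Any intersection of a family of these simplices is the full simplex on $V-\bigcup_{a\in A}S_a$, which is contractible whenever it is nonempty. It is nonempty exactly when $A\neq\{1,\dots,r+1\}$, because the $S_a$ cover $V$. Therefore the nerve is $\partial\Delta^{r}$, and
\[
\Delta_d^t(C_n^r)\simeq\partial\Delta^r\cong\mathbb{S}^{r-1}.
\]

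An alternative route is to observe directly that the complex equals $\bigcup_a\Delta^{V-S_a}$, the join of the boundaries of the simplices $\Delta^{S_a}$ viewed appropriately. One could also dualize: $BI_d(C_n^r)=\bigast_{a}\partial\Delta^{S_a}$ up to the non-faces, which is a sphere of dimension $(r+1)(d-1)-1$, and Alexander duality then gives homology in degree $n-3-((r+1)d-r-2)=r-1$, consistent with the answer. I would present the nerve argument as the proof.

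The only step needing care is the characterization of the maximum independent sets. This includes the edge cases: $r+1>n/2$ cannot occur here since $d\geq2$, and for $d=1$ the claim is vacuous. It also includes checking that the nonemptiness condition for the intersections is exactly "not all classes removed."
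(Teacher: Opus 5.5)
Your proof is correct and follows the paper's own argument. You identify the $r+1$ disjoint maximum independent sets, and your gap-counting justifies this step, which the paper only asserts. You then write $\Delta_d^t(C_n^r)=\bigcup_a\Delta^{V-S_a}$ and apply the Nerve Theorem to get $\partial\Delta^r\cong\mathbb{S}^{r-1}$.

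There is one slip in your aside, though it does not affect the proof. $\bigcup_a\Delta^{V-S_a}$ is not a join of boundaries. The join $\bigast_a\partial\Delta^{S_a}$ is exactly $BI_d(C_n^r)$, its Alexander dual, with no ``non-faces'' correction needed.
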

\begin{proof}
Because $\alpha\left(C_n^r\right)=\left\lfloor\frac{n}{r+1}\right\rfloor$, for $n=(r+1)d$ there are $r+1$ independent sets of 
size $d$ which are disjoint. Let $S_1,\dots,S_{r+1}$ be these independent sets, then
$$\Delta_d^t\left(C_{n}^r\right)=\Delta^{V-S_1}\cup\cdots\cup\Delta^{V-S_{r+1}}$$
The intersection of any $r$ or less of these complexes is contractible, and the intersection of all of the complexes is empty. Therefore, 
by the Nerve Theorem \ref{nrvthm}, we have that
$$\Delta_d^t\left(C_{n}^r\right)\simeq\mathbb{S}^{r-1}.$$
\end{proof}

\begin{cor}
For $d\geq3$, if $n=(r+1)d$, then
$$BI_d\left(C_{n}^r\right)\simeq\mathbb{S}^{(r+1)(d-2)+r}$$
\end{cor}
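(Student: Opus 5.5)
We derive the corollary from Proposition \ref{propcr+1d} by Alexander duality, and then upgrade the homology computation to a homotopy equivalence using simple connectivity. Write $n=(r+1)d$. Since $\Delta_d^t(C_n^r)=BI_d(C_n^r)^*$, Theorem \ref{dualidadalexander} applied to the complex $K=BI_d(C_n^r)$ on $n$ vertices gives
$$\tilde{H}_q(BI_d(C_n^r))\cong\tilde{H}^{n-q-3}(\Delta_d^t(C_n^r)).$$
Proposition \ref{propcr+1d} says $\Delta_d^t(C_n^r)\simeq\mathbb{S}^{r-1}$, so its reduced cohomology is $\mathbb{Z}$ in degree $r-1$ and zero otherwise. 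Hence the only nonzero reduced homology of $BI_d(C_n^r)$ is $\mathbb{Z}$, in degree $q=n-r-2$.

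Next we check that this degree is the one in the statement:
$$n-r-2=(r+1)d-r-2=(r+1)(d-2)+2(r+1)-r-2=(r+1)(d-2)+r.$$
The hypothesis $d\geq3$ enters only in the following step. By Proposition \ref{propconnbi}, $\mathrm{conn}(BI_d(G))\geq d-3\geq0$. Since $C_n^r$ is connected and $BI_3$ of a connected graph is simply connected, $BI_d(C_n^r)$ is simply connected when $d=3$. For $d\geq4$ the $2$-skeleton is full, because $\mathrm{sk}_{d-2}BI_d=\mathrm{sk}_{d-2}\Delta^{V}$ and $d-2\geq2$, so $BI_d(C_n^r)$ is again simply connected.

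Finally, Theorem \ref{homttyphomlgrp} applies to this simply connected complex with a single nonzero reduced homology group $\mathbb{Z}$, giving $BI_d(C_n^r)\simeq\mathbb{S}^{(r+1)(d-2)+r}$. Nothing here is a real obstacle. The points that need care are the index bookkeeping in Alexander duality and the simple connectivity for $d=3$, which requires the connectedness of $C_n^r$. The homology is torsion-free automatically, since it is concentrated in one degree as $\mathbb{Z}$.

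A direct alternative would be $BI_d(C_n^r)\cong\bigast_{j=1}^{r+1}\partial\Delta^{S_j}$, in the spirit of the case $n=3d$ of Theorem \ref{bidcn2}, which gives a sphere of dimension $(r+1)(d-1)-1$. That dimension disagrees with the statement, so the join structure does not hold for general $r$. The duality route above is the reliable one.
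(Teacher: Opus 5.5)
Your main argument is correct and is the route the paper intends. The corollary is stated without proof right after Proposition \ref{propcr+1d} and follows exactly as Corollary \ref{corhomtybidcn} does: Alexander duality, simple connectivity from Proposition \ref{propconnbi}, then Theorem \ref{homttyphomlgrp}. Your degree bookkeeping $n-r-2=(r+1)(d-2)+r$ is also right.

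Your closing paragraph, however, is wrong. Since $(r+1)(d-1)-1=(r+1)(d-2)+r$, the two dimensions agree.

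In fact the join description holds for every $r$. Because $n=(r+1)d$, every cyclic gap in an independent $d$-set must equal exactly $r+1$. So the independent $d$-sets are precisely $S_j=\{j,j+(r+1),\dots,j+(d-1)(r+1)\}$ for $1\le j\le r+1$, and these partition the vertex set. A set $\sigma$ lies in $BI_d(C_n^r)$ exactly when $S_j\not\subseteq\sigma$ for all $j$, so $BI_d(C_n^r)=\bigast_{j=1}^{r+1}\partial\Delta^{S_j}\cong\mathbb{S}^{(r+1)(d-2)+r}$.

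This is a one-line proof and even gives a homeomorphism. It needs neither duality nor simple connectivity, and it also works for $d=2$. It is the same argument the paper uses for the case $n=3d$ of Theorem \ref{bidcn2}. You set aside the most direct proof because of an arithmetic slip, so the sentence claiming the join structure fails for general $r$ should be removed.
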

Now we focus on $\Delta_2^t(C_n^r)$.
\begin{lem}\label{theoconncyclepower}
$\pi_1\left(\Delta_2^t(C_n^r)\right)\cong0$ for $n\geq2r+3$ and $r\geq3$.
\end{lem}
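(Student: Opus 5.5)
The plan is to show that $\mathrm{sk}_2\Delta_2^t(C_n^r)=\mathrm{sk}_2\Delta^{V(C_n^r)}$, that is, every set of at most $3$ vertices is a simplex. By the first of the known facts listed in the preliminaries, a full $2$-skeleton makes the complex $1$-connected, which gives $\pi_1\left(\Delta_2^t(C_n^r)\right)\cong0$.

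By definition, $\sigma\in\Delta_2^t(C_n^r)$ if and only if $\alpha(C_n^r[V-\sigma])\geq2$, where $V=V(C_n^r)=\underline{n}$. Equivalently, $V-\sigma$ must contain two non-adjacent vertices of $C_n^r$. Since $\Delta_2^t(C_n^r)$ is closed under subsets, it suffices to treat an arbitrary $3$-subset $\sigma=\{x,y,z\}$ of $V$, provided $n\geq4$ (which holds).

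To find a non-adjacent pair avoiding $\sigma$, consider the family of $n$ pairs
$$\mathcal{F}=\bigl\{\{i,i+r+1\}:\; i\in\mathbb{Z}_n\bigr\},$$
with indices taken mod $n$. The steps are as follows.

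First, each pair in $\mathcal{F}$ is a non-edge of $C_n^r$. The two vertices are at cyclic distance $\min\{r+1,\,n-r-1\}$, and this equals $r+1>r$ because $n-r-1\geq r+2$ when $n\geq2r+3$.

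Second, these $n$ pairs are distinct. This holds because $r+1\neq n-(r+1)$ when $n\neq2r+2$, and $n\geq2r+3$.

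Third, each vertex $v$ lies in exactly two pairs of $\mathcal{F}$, namely $\{v,v+r+1\}$ and $\{v-r-1,v\}$. So removing the three vertices of $\sigma$ destroys at most $6$ pairs.

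Since $r\geq3$ gives $n\geq2r+3\geq9>6$, some pair $\{i,i+r+1\}$ lies entirely in $V-\sigma$. Hence $\alpha(C_n^r[V-\sigma])\geq2$ and $\sigma$ is a simplex of $\Delta_2^t(C_n^r)$.

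I do not expect a real obstacle. The only points needing care are the two places where the hypotheses enter: $n\geq2r+3$ guarantees that the pairs in $\mathcal{F}$ are genuine, distinct non-edges, and $r\geq3$ supplies the count $n>6$. This counting argument is the analogue, for $d=2$, of the skeleton argument used at the end of the proof of Theorem \ref{theodisconntotcut}.
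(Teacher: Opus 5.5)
Your proof is correct and follows the paper's strategy: show that every $3$-subset of $\underline{n}$ is a face, so that $\mathrm{sk}_2\Delta_2^t(C_n^r)=\mathrm{sk}_2\Delta^{\underline{n}}$ and the complex is simply connected. The only difference is in the combinatorial step. The paper argues that if $C_n^r-\{x,y,z\}$ had no non-adjacent pair it would be a clique $K_{n-3}$, larger than the maximum clique $K_{r+1}$ of $C_n^r$; it uses this clique bound without proof. Your explicit family of $n$ non-edges $\{i,i+r+1\}$, with each vertex lying in exactly two of them, instead gives a self-contained count.
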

\begin{proof}
We will see that $\mathrm{sk}_2\Delta_2^t(C_n^r)=\mathrm{sk}_2\Delta^{\underline{n}}$. 
Take $\{x,y,z\}$ in $\binom{\underline{n}}{3}$. If $C_n^r-\{x,y,z\}$ does not have an independent set of cardinality $2$, then 
$C_n^r-\{x,y,z\}\cong K_{n-3}$, but this is not possible because in $C_n^r$ the biggest complete subgraph possible is $K_{r+1}$. 
\end{proof}

\begin{prop}\label{propclcipow}\citep{clicomgrppowers}
For any $n\geq3$ and $\displaystyle 0\leq r<\frac{n}{2}$
$$BI_2(C_n^r)\simeq\left\lbrace\begin{array}{cc}\displaystyle
   \bigvee_{n-2r-1}\mathbb{S}^{2l} & \mbox{ if } r=\frac{l}{2l+1}n \\
    & \\
    \mathbb{S}^{2l+1} & \mbox{ if } \frac{l}{2l+1}n<r<\frac{l+1}{2l+3}n
\end{array}
\right.\mbox{ for some } l\geq0$$
\end{prop}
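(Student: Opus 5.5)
The statement is cited from \citep{clicomgrppowers}; I would follow the strategy already used in Theorems \ref{cyclepwerbid}, \ref{bidcn2} and \ref{cyclpwonr}. The plan is to map $BI_2(C_n^r)$ simplicially to a smaller, explicitly understood complex, and then apply Theorem \ref{theopweakeqv} with a basis-like cover. Concretely, the first step is to understand the clique complex through its natural circular structure. A simplex of $BI_2(C_n^r)$ is a clique of $C_n^r$. Since $r<n/2$, every clique is contained in an arc of $r+1$ consecutive vertices. So the maximal simplices are exactly the $n$ arcs $A_j=\{j,j+1,\dots,j+r\}$, with indices taken mod $n$, together possibly with some "wrap-around" cliques that are not arcs.

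The wrap-around cliques arise precisely when $3r\geq n$, i.e.\ $l\geq1$. I would first deal with the range $0<r<n/3$ (the case $l=0$). There every clique lies in an arc, and the cover $\{\Delta^{A_j}\}$ satisfies the Nerve Theorem \ref{nrvthm}: nonempty intersections of arcs are arcs, so they are simplices. The nerve is the complex of arcs with a common point, which is homotopy equivalent to $\mathbb{S}^1$, giving $\mathbb{S}^{1}$. At $r=0$ one gets $n$ points, which agrees with the wedge $\bigvee_{n-1}\mathbb{S}^0$.

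For general $r$, I would identify $BI_2(C_n^r)$ up to homotopy with the Vietoris--Rips complex of $n$ equally spaced points on a circle at scale $r$. Then I would reproduce the Adamaszek--Adams argument, since the citation to \citep{clicomgrppowers} indicates that this is the intended proof. The key steps, in order, are as follows.
\begin{enumerate}
\item Define the "winding fraction" of a clique via a cyclic dynamical system $f(j)=j+r$ acting on the vertex set. A clique whose vertices spread around the circle with fraction $<l/(2l+1)$ has a well-defined homotopy class.
\item Build a simplicial map from $BI_2(C_n^r)$ to a cyclic polytope boundary, or equivalently to $\partial$ of a cross-polytope-like sphere of dimension $2l+1$. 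This uses a coloring collapsing consecutive vertices, exactly in the spirit of $\psi_{d,n}$ above.
\item Check, for each face of a basis-like cover of the target, that the preimage is a clique complex of a graph falling into the previous cases. Such a graph is a power of a path, hence chordal, and Proposition \ref{propchordlbid} makes these preimages contractible.
\item Conclude by Theorem \ref{theopweakeqv}.
\end{enumerate}

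The boundary case $r=\frac{l}{2l+1}n$ needs separate handling. There the cliques of maximal winding form $n-2r-1$ extra "critical" facets. Removing them is a sequence of homotopy pushouts that, by Lemma \ref{homocolimpegado}, attach $(2l+1)$-cells killing the top sphere and leave a wedge of $\mathbb{S}^{2l}$'s. Counting these cells via an Euler characteristic computation gives the multiplicity $n-2r-1$.

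The main obstacle is step 2 together with its contractibility checks in step 3. One must produce a coloring for which no clique meets all colors in a forbidden pattern, and for which every preimage of a cover element is chordal or otherwise collapsible. This requires controlling the wrap-around cliques. I would first try grouping vertices into $2l+2$ classes of nearly equal consecutive size, mirroring the constructions in Theorems \ref{bidcn2} and \ref{cyclpwonr}. If that fails, I would fall back on the cyclic-dynamics and winding-fraction argument of \citep{clicomgrppowers}.
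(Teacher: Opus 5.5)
The paper does not prove this proposition; it only quotes it from \citep{clicomgrppowers}. So your argument has to stand on its own, and for $l\geq1$ it does not. Steps 1--4 are a strategy whose main claims you leave open, and the justifications you do give fail.

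\textbf{Step 3.} You claim each preimage of a cover element is a power of a path, hence chordal. This is false throughout the range $3r\geq n$ you are treating. Preimages are unions of colour classes spread around the cycle, or arcs with more than $n-r$ vertices, and these acquire wrap-around edges. For instance, $C_{25}^{9}$ has $l=1$, and the arc $\{0,\dots,19\}$ (the complement of one of five equal blocks) contains the induced $4$-cycle $0,5,10,16$. Here $16\sim0$ because their cyclic distance is $9$, while $0\not\sim10$ and $5\not\sim16$. So Proposition \ref{propchordlbid} is unavailable, and the contractibility of these preimages, which is the real content of the theorem, is unproved.

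\textbf{Steps 1--2.} Step 2 is inconsistent: $2l+2$ colour classes give a target on at most $2l+2$ vertices, and no such complex is homotopy equivalent to $\mathbb{S}^{2l+1}$. You need at least $2l+3$ classes, e.g.\ a map to $\partial\Delta^{2l+2}$. Step 1 is not a precise statement.

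\textbf{The fallback is circular.} The winding-fraction argument is from Adamaszek--Adams. There, general cyclic graphs are dismantled down to powers of cycles, whose clique complexes are taken from \citep{clicomgrppowers}, i.e.\ from the very statement you are proving.

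\textbf{The boundary case is misdescribed, not merely unproved.} For $n=(2l+1)m$ and $r=lm$, the critical cliques are the $m=n-2r$ regular $(2l+1)$-gons $\{j,j+m,\dots,j+2lm\}$. These are $2l$-simplices, and there are $n-2r$ of them, not $n-2r-1$. Lemma \ref{homocolimpegado} only ever adds wedge summands, so it cannot kill a sphere; here the attaching maps are essential. In $C_6^2$, the octahedron, deleting the two opposite triangles $\{0,2,4\}$ and $\{1,3,5\}$ leaves an annulus, and gluing them back along its boundary circles gives $\mathbb{S}^2$. The count $n-2r-1$ should instead come from two facts: the complex minus these $n-2r$ top simplices is $\simeq\mathbb{S}^{2l-1}$, and each simplex is attached by a map of degree $\pm1$.

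\textbf{The $l=0$ case also has a hole.} The nerve of $\{\Delta^{A_j}\}$ is isomorphic to $BI_2(C_n^r)$ itself, since the arcs $A_{j_1},\dots,A_{j_k}$ meet iff the indices $j_i$ lie in $r+1$ consecutive positions. So the Nerve Theorem tells you nothing here. You need a separate argument, for example collapsing each $A_j$ through its free face $\{j,j+r\}$. This lowers $r$ by one at each stage, ending at $C_n\cong\mathbb{S}^1$.
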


As an immediate consequence of the last proposition we have that $BI_2(C_n^r)\simeq\mathbb{S}^1$ for $n\geq3r+1$.

\begin{theorem}
For $r\geq3$ we have the following 
\begin{itemize}
\item[(a)] for $n=2r+2$
$$\Delta_2^t(C_n^r)\simeq\mathbb{S}^{r-1}$$
 \item[(b)] for $2r+3\leq n\leq3r-1$
$$\Delta_2^t(C_n^r)\simeq\left\lbrace\begin{array}{cc}
   \displaystyle\bigvee_{n-2r-1}\mathbb{S}^{n-2l-3}  &  \mbox{ if } r=\frac{l}{2l+1}n \\
    & \\
   \mathbb{S}^{n-2l-4}  &  \mbox{ if } \frac{l}{2l+1}n<r<\frac{l+1}{2l+3}n
\end{array}
\right. \mbox{ for some } l\geq0$$
\item[(c)] for $n=3r$
$$\Delta_2^t(C_n^r)\simeq\bigvee_{r-1}\mathbb{S}^{3r-5}$$
\item[(d)] for $n\geq3r+1$
$$\Delta_2^t(C_n^r)\simeq\mathbb{S}^{n-4}$$
\end{itemize}
\end{theorem}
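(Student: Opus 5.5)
The plan is to use the paper's standard duality mechanism: $\Delta_2^t(C_n^r)=BI_2(C_n^r)^*$, and the clique complex $BI_2(C_n^r)$ is known from Proposition \ref{propclcipow}. Since $C_n^r$ has $n$ vertices, Theorem \ref{dualidadalexander} gives
$$\tilde H_q(\Delta_2^t(C_n^r))\cong\tilde H^{n-q-3}(BI_2(C_n^r)).$$
If $BI_2(C_n^r)$ is a wedge of $m$ copies of $\mathbb{S}^j$, its cohomology is free and concentrated in degree $j$. Hence $\Delta_2^t(C_n^r)$ has the homology of a wedge of $m$ spheres of dimension $n-j-3$. When $\Delta_2^t(C_n^r)$ is simply connected, Theorem \ref{homttyphomlgrp} upgrades this to a homotopy equivalence. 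For $n\geq 2r+3$, simple connectivity is exactly Lemma \ref{theoconncyclepower}, which uses $r\geq 3$.

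For parts (b), (c) and (d) I would first read off the clique complex from Proposition \ref{propclcipow}, then dualize.
\begin{itemize}
\item[(b)] For $2r+3\leq n\leq 3r-1$ we have $r<n/2$, so Proposition \ref{propclcipow} applies. If $r=\frac{l}{2l+1}n$, then $BI_2\simeq\bigvee_{n-2r-1}\mathbb{S}^{2l}$ and the dual is $\bigvee_{n-2r-1}\mathbb{S}^{n-2l-3}$. Otherwise $BI_2\simeq\mathbb{S}^{2l+1}$ and the dual is $\mathbb{S}^{n-2l-4}$.
\item[(c)] For $n=3r$ we have $r=\frac{1}{3}n$, which is the case $l=1$. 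So $BI_2\simeq\bigvee_{r-1}\mathbb{S}^2$ and the dual is $\bigvee_{r-1}\mathbb{S}^{3r-5}$.
\item[(d)] For $n\geq 3r+1$, the remark after Proposition \ref{propclcipow} gives $BI_2\simeq\mathbb{S}^1$, so the dual is $\mathbb{S}^{n-4}$.
\end{itemize}
In all three parts simple connectivity comes from Lemma \ref{theoconncyclepower}.

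Part (a), $n=2r+2$, is handled separately. Here $\alpha(C_{2r+2}^r)=2$, and $\{i,j\}$ is independent exactly when $j=i+r+1$ modulo $n$. So the independent $2$-sets are the $r+1$ disjoint antipodal pairs. This is precisely the situation of Proposition \ref{propcr+1d} with $d=2$: $\Delta_2^t$ is the union of the $r+1$ simplices $\Delta^{V-S_i}$. Any $r$ of them meet in a simplex, and all $r+1$ have empty intersection. By the Nerve Theorem \ref{nrvthm}, $\Delta_2^t(C_{2r+2}^r)\simeq\partial\Delta^r\cong\mathbb{S}^{r-1}$.

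The main obstacle is bookkeeping rather than topology. I need to check that the cases of Proposition \ref{propclcipow} cover every $n$ in the range $2r+3\leq n\leq 3r-1$. I also need to check that the resulting sphere dimensions $n-2l-3$ and $n-2l-4$ are at least $2$, so that the homology statement combined with simple connectivity is meaningful and Theorem \ref{homttyphomlgrp} applies. This follows from $l<r$ and $n\geq 2r+3$, and I would verify it briefly for each case.
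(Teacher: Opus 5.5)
Your proposal is correct and is essentially the paper's argument: (a) is Proposition \ref{propcr+1d} with $d=2$, and (b)--(d) follow from Proposition \ref{propclcipow}, Theorem \ref{dualidadalexander}, Lemma \ref{theoconncyclepower} and Theorem \ref{homttyphomlgrp}. Your closing dimension check is harmless but automatic, since a simply connected complex has $\tilde H_0=\tilde H_1=0$, so the dual homology cannot sit in degree $\leq 1$; if you do carry it out, note that $l<r$ only gives $n-2l-4\geq 1$, whereas $r/l\geq n-2r\geq 3$ gives the sharper bound $l\leq r/3$ that you need.
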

\begin{proof}
The point (a) is a particular case of Proposition \ref{propcr+1d}. The rest follows by Proposition \ref{propclcipow}, Lemma \ref{theoconncyclepower} and, Theorems \ref{homttyphomlgrp} and \ref{dualidadalexander}.
\end{proof}

\begin{que}
Which are the homotopy types of $BI_d(C_n^r)$ and $\Delta_d^t(C_n^r)$ for $d\geq3$, $r\geq3$ and $(r+1)d+1\leq n\leq2rd-r$?
\end{que}

\subsection{Somme Cartesian products}
We take the lattice $L(n_1,\dots,n_k)\cong P_{n_1}\oblong\cdots\oblong P_{n_k}$, because this graph does not have triangles, we have that 
$$BI_2(L(n_1,\dots,n_k))\simeq\bigvee_{s_k(n_1,\dots,n_k)}\mathbb{S}^1$$
where 
$$s_k(n_1,\dots,n_k)=\left(\sum_{i=1}^k(n_i-1)\prod_{j\neq i}n_j\right)-\prod_{i=1}^kn_i +1$$
is the number of edges in the graph minus the number of edges in a spanning tree, \textit{i.e.} the cyclomatic number of the graph.
\begin{theorem}\label{teolattice}
For $n_1,\dots,n_k$ integers equal or bigger than $2$, we have that
$$\Delta_2^t(L(n_1,\dots,n_k))\simeq\bigvee_{s_k(n_1,\dots,n_k)}\mathbb{S}^{n-4}.$$
\end{theorem}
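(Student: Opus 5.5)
Let $G=L(n_1,\dots,n_k)$ and let $n=\prod_i n_i$ be its order. The plan is the same duality argument used in the previous theorems: compute the homology of $\Delta_2^t(G)$ from $BI_2(G)$ via Alexander duality, show that $\Delta_2^t(G)$ is simply connected, and then apply Theorem \ref{homttyphomlgrp}.

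First, the homology. Since $G$ is triangle-free, $BI_2(G)$ is the graph $G$ itself viewed as a $1$-dimensional complex. The graph $G$ is connected, so $BI_2(G)\simeq\bigvee_{s}\mathbb{S}^1$ with $s=s_k(n_1,\dots,n_k)$, as stated just before the theorem. Its only nonzero reduced cohomology is $\tilde H^1\cong\mathbb{Z}^{s}$. By Theorem \ref{dualidadalexander}, $\tilde H_q(\Delta_2^t(G))\cong\tilde H^{n-q-3}(BI_2(G))$. This group is nonzero only when $n-q-3=1$, that is $q=n-4$, and then it equals $\mathbb{Z}^s$. So $\Delta_2^t(G)$ has the homology of $\bigvee_s\mathbb{S}^{n-4}$.

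Next, simple connectivity. Assume $n\geq 5$, so that $n-3\geq 2$. I would show $\mathrm{sk}_2\Delta_2^t(G)=\mathrm{sk}_2\Delta^{V(G)}$, as in Lemma \ref{theoconncyclepower}: for every $3$-set $\{x,y,z\}$ the graph $G-\{x,y,z\}$ should contain two nonadjacent vertices. If it did not, then $G-\{x,y,z\}$ would be complete on $n-3\geq 2$ vertices. Since $G$ is triangle-free, a complete subgraph has at most $2$ vertices, so $n-3\leq 2$, i.e. $n\leq 5$. But $n$ is a product of integers $\geq2$, so $n\leq 5$ forces $n\in\{2,3,4\}$; these are excluded by $n\geq 5$. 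Hence every $3$-set is a face, the complex is simply connected, and Theorem \ref{homttyphomlgrp} gives $\Delta_2^t(G)\simeq\bigvee_s\mathbb{S}^{n-4}$. The argument needs $n-4\geq 2$ so that the $2$-skeleton claim is meaningful; for $n\geq 6$ this holds.

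The main obstacle is the small cases, which must be handled by hand.
\begin{itemize}
\item $n=2$ ($G=P_2$): here $s=0$ and $\Delta_2^t(P_2)$ is void. I would treat this as degenerate or excluded, or interpret the statement as a wedge of zero spheres.
\item $n=3$: this cannot occur, since $n$ is a product of integers $\geq 2$.
\item $n=4$, i.e. $G=P_2\oblong P_2=C_4$ or $G=P_4$: for $C_4$ we have $s=1$, and Theorem \ref{totcomcycles} gives $\Delta_2^t(C_4)\simeq\mathbb{S}^0=\mathbb{S}^{n-4}$. For $P_4$ we have $s=0$; the complex has the homology of a point. I would check directly that it is nonempty and connected, which happens when $n-4=0$ and the complex is contractible.
\item $n=5$ ($P_5$, $s=0$): here I would check contractibility directly, for instance through Proposition \ref{homosusp} applied at an end vertex.
\end{itemize}
For $n\geq 6$ the general argument above applies without change.
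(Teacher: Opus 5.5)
Your proof is correct. The homology half is exactly the paper's argument: $G$ is triangle-free, so $BI_2(G)$ is the graph itself, a wedge of $s_k$ circles, and then Alexander duality and Theorem \ref{homttyphomlgrp} apply.

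You differ from the paper in how you get simple connectivity. The paper establishes $\mathrm{sk}_2\Delta_2^t(L(n_1,\dots,n_k))=\mathrm{sk}_2\Delta^{V}$ by explicit local checks:
\begin{itemize}
\item for $k\geq3$, it splits the cube on $\underline{2}^3\times\{1\}\times\cdots\times\{1\}$ into four disjoint independent pairs, so any $3$-set misses one of them;
\item for $k=2$, $n_1\geq3$, it does a case analysis on the $3\times2$ subgrid.
\end{itemize}
You instead use the uniform clique-size argument of Lemma \ref{theoconncyclepower}: deleting three vertices from a triangle-free graph on $n\geq6$ vertices leaves a graph that cannot be complete. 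This is shorter and more general. It is essentially Theorem \ref{theoconngirth} with $d=2$ and $g\geq4$, and it gives $(n-5)$-connectivity of $\Delta_2^t(G)$ for every triangle-free $G$. It makes the paper's case analysis unnecessary; that analysis adds only concreteness.

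One slip needs fixing: the sentence ``$n\leq5$ forces $n\in\{2,3,4\}$'' is wrong as written.
\begin{itemize}
\item If $k\geq2$, then $n\leq5$ forces $n=4$, i.e.\ $G=C_4$. You handle this via Theorem \ref{totcomcycles}, and it is also the paper's only special case.
\item If you allow $k=1$, then $n=3$ and $n=5$ do occur ($P_3$ and $P_5$), contradicting your claim that $n=3$ cannot occur. Nothing breaks, since $\Delta_2^t(P_3)$ is a point.
\end{itemize}
The paper's proof tacitly assumes $k\geq2$. So the path cases lie outside what it proves, including the degenerate void complex $\Delta_2^t(P_2)$, which you were right to flag.
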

\begin{proof}
For $k=2$ and $n_1=n_2=2$ it is clear from Theorem \ref{totcomcycles}. For the other cases we take three distinct vertices $x,y,z$. If $k\geq3$, notice that in the cube induced by the vertices 
$\underline{2}^3\times\{1\}\times\cdots\times\{1\}$ the independent sets 
$\{(1,1,1,\dots,1)$, $(2,2,1,\dots,1)\}$, $\{(2,1,1,\dots,1)$, $(1,2,1,\dots,1)\}$, $\{(1,1,2,\dots,1)$, $(2,2,2,\dots,1)\}$, 
$\{(2,1,2,\dots,1),\dots,(1,2,2,\dots,1)\}$ are disjoint, therefore $\{x,y,z\}$ is a simplex of 
$\Delta_2^t(L(n_1,\dots,n_k))$. Lastly, if $k=2$ and $n_1\geq3$. We take $H$ the subgraph induced by the vertex set $\underline{3}\times\underline{2}$ (see Figure \ref{lat32}). 
If $\sigma$ is contained in $V(H)$, there are up to isomorphism three cases for $H[V(H)-\sigma]$ seen in Figures \ref{case1},\ref{case2},\ref{case3}, where the red vertices are $\sigma$, and we have that 
$\alpha(H[V(H)-\sigma])=2$. Then  $\{x,y,z\}$ is a simplex of $\Delta_2^t(L(n_1,n_2))$. If $\sigma$ is not contained in $H$, then 
$\alpha(H[V(H)-\sigma])\geq2$ and  $\{x,y,z\}$ is a simplex of $\Delta_2^t(L(n_1,n_2))$. Regardless of the case we have that 
$\mathrm{sk}_2\Delta_2^t(L(n_1,\dots,n_k))=\mathrm{sk}_2\Delta^{\underline{n_1}\times\cdots\times\underline{n_k}}$.
Therefore $\Delta_2^t(L(n_1,\dots,n_k))$ is simply connected. 
By Theorems \ref{homttyphomlgrp}, \ref{dualidadalexander}, \ref{gradconse} and the observation previous to the theorem, we have that $\Delta_2^t(L(n_1,\dots,n_k))$ has the expected homotopy type.
\end{proof}

\begin{figure}
\centering
\subfigure[$H$]{\begin{tikzpicture}[line cap=round,line join=round,>=triangle 45,x=1.0cm,y=1.0cm]
\clip(1.5,0.5) rectangle (4.5,2.5);
\draw (2,2)-- (3,2);
\draw (3,2)-- (4,2);
\draw (4,2)-- (4,1);
\draw (4,1)-- (3,1);
\draw (3,1)-- (3,2);
\draw (3,1)-- (2,1);
\draw (2,1)-- (2,2);
\begin{scriptsize}
\fill [color=black] (2,1) circle (1.5pt);
\fill [color=black] (2,2) circle (1.5pt);
\fill [color=black] (3,2) circle (1.5pt);
\fill [color=black] (4,2) circle (1.5pt);
\fill [color=black] (4,1) circle (1.5pt);
\fill [color=black] (3,1) circle (1.5pt);
\end{scriptsize}
\end{tikzpicture}\label{lat32}}
\subfigure[Case 1]{\begin{tikzpicture}[line cap=round,line join=round,>=triangle 45,x=1.0cm,y=1.0cm]
\clip(1.5,0.5) rectangle (4.5,2.5);
\draw (2,2)-- (3,2);
\draw (3,2)-- (4,2);
\draw (4,2)-- (4,1);
\draw (4,1)-- (3,1);
\draw (3,1)-- (3,2);
\draw (3,1)-- (2,1);
\draw (2,1)-- (2,2);
\begin{scriptsize}
\fill [color=red] (2,1) circle (1.5pt);
\fill [color=red] (2,2) circle (1.5pt);
\fill [color=red] (3,2) circle (1.5pt);
\fill [color=black] (4,2) circle (1.5pt);
\fill [color=black] (4,1) circle (1.5pt);
\fill [color=black] (3,1) circle (1.5pt);
\end{scriptsize}
\end{tikzpicture}\label{case1}}
\subfigure[Case 2]{\begin{tikzpicture}[line cap=round,line join=round,>=triangle 45,x=1.0cm,y=1.0cm]
\clip(1.5,0.5) rectangle (4.5,2.5);
\draw (2,2)-- (3,2);
\draw (3,2)-- (4,2);
\draw (4,2)-- (4,1);
\draw (4,1)-- (3,1);
\draw (3,1)-- (3,2);
\draw (3,1)-- (2,1);
\draw (2,1)-- (2,2);
\begin{scriptsize}
\fill [color=red] (2,1) circle (1.5pt);
\fill [color=black] (2,2) circle (1.5pt);
\fill [color=red] (3,2) circle (1.5pt);
\fill [color=black] (4,2) circle (1.5pt);
\fill [color=black] (4,1) circle (1.5pt);
\fill [color=red] (3,1) circle (1.5pt);
\end{scriptsize}
\end{tikzpicture}\label{case2}}
\subfigure[Case 3]{\begin{tikzpicture}[line cap=round,line join=round,>=triangle 45,x=1.0cm,y=1.0cm]
\clip(1.5,0.5) rectangle (4.5,2.5);
\draw (2,2)-- (3,2);
\draw (3,2)-- (4,2);
\draw (4,2)-- (4,1);
\draw (4,1)-- (3,1);
\draw (3,1)-- (3,2);
\draw (3,1)-- (2,1);
\draw (2,1)-- (2,2);
\begin{scriptsize}
\fill [color=black] (2,1) circle (1.5pt);
\fill [color=red] (2,2) circle (1.5pt);
\fill [color=red] (3,2) circle (1.5pt);
\fill [color=red] (4,2) circle (1.5pt);
\fill [color=black] (4,1) circle (1.5pt);
\fill [color=black] (3,1) circle (1.5pt);
\end{scriptsize}
\end{tikzpicture}\label{case3}}
\caption{Auxiliary graphs for Theorem \ref{teolattice}}
\end{figure}

\begin{que}
Which are the homotopy types of $BI_d(L(n_1,\dots,n_k))$ and $\Delta_d^t(L(n_1,\dots,n_k))$ for $d\geq3$?
\end{que}

We take $K(n_1,n_2,\dots,n_k)=K_{n_1}\oblong K_{n_2}\oblong\cdots\oblong K_{n_k}$. In \citep{totcutcompl} the homotopy type of 
$\Delta_2^t(K(n,2))$ was determined, more precisely $\Delta_2^t(K(n,2))$ has the homotopy type of a wedge of $(2n-4)$-spheres if $n\geq2$ \citep[see Theorem 4.8]{totcutcompl}. 
We extend this by determining the homotopy type of $\Delta_2^t(K(n_1,n_2,\dots,n_k))$ for all $k\geq2$. First we define some functions. For  
$k\geq2$ and $n_1,\dots,n_k$ positive integers we take
$$F_k(n_1,\dots,n_,n_k)=(k-1)n+1-\sum_{i=1}^k\prod_{j\neq i}n_i$$
where $n=n_1\cdots n_K$. For $k\geq3$ and $n_1,\dots,n_k\geq2$ it is not hard to prove that these functions have the following properties:
$$F_k(1,\dots,1)=0,\;\;F_k(2,\dots,2)=k2^{k-1}-2^k+1,$$
$$F_k(n_1,\dots,n_{k-1},1)=F_{k-1}(n_1,\dots,n_{k-1}),$$
$$F_k(n_1,\dots,n_k)=F_k(n_1,\dots,n_k-1)+F_{k-1}(n_1,\dots,n_{k-1})+n_1\cdots n_{k-1}-1.$$
\begin{figure}
\centering
\subfigure[Case $k=2$]{\begin{tikzpicture}[line cap=round,line join=round,>=triangle 45,x=1.0cm,y=1.0cm]
\clip(3.5,1.5) rectangle (6.5,4.5);
\draw (4,4)-- (5,4);
\draw (5,4)-- (6,4);
\draw (4,3)-- (5,3);
\draw (5,3)-- (6,3);
\draw (4,2)-- (5,2);
\draw (5,2)-- (6,2);
\draw (4,2)-- (4,3);
\draw (4,3)-- (4,4);
\draw (5,4)-- (5,3);
\draw (5,3)-- (5,2);
\draw (6,2)-- (6,3);
\draw (6,3)-- (6,4);
\draw (4,4) .. controls (4.5,4.5) and (5.5,4.5) .. (6,4);
\draw (4,3) .. controls (4.5,3.5) and (5.5,3.5) .. (6,3);
\draw (4,2) .. controls (4.5,2.5) and (5.5,2.5) .. (6,2);
\draw (4,4) .. controls (4.5,3.5) and (4.5,2.5) .. (4,2);
\draw (5,4) .. controls (5.5,3.5) and (5.5,2.5) .. (5,2);
\draw (6,4) .. controls (6.5,3.5) and (6.5,2.5) .. (6,2);
\begin{scriptsize}
\fill [color=black] (4,4) circle (1.5pt);
\fill [color=black] (5,4) circle (1.5pt);
\fill [color=black] (6,4) circle (1.5pt);
\fill [color=black] (4,3) circle (1.5pt);
\fill [color=black] (5,3) circle (1.5pt);
\fill [color=black] (6,3) circle (1.5pt);
\fill [color=black] (4,2) circle (1.5pt);
\fill [color=black] (5,2) circle (1.5pt);
\fill [color=black] (6,2) circle (1.5pt);
\end{scriptsize}
\end{tikzpicture}\label{grith3x3}} 
\subfigure[Case $k=2$ (a)]{\begin{tikzpicture}[line cap=round,line join=round,>=triangle 45,x=1.0cm,y=1.0cm]
\clip(3.5,1.5) rectangle (6.5,4.5);
\draw (4,4)-- (5,4);
\draw (5,4)-- (6,4);
\draw (4,3)-- (5,3);
\draw (5,3)-- (6,3);
\draw (4,2)-- (5,2);
\draw (5,2)-- (6,2);
\draw (4,2)-- (4,3);
\draw (4,3)-- (4,4);
\draw (5,4)-- (5,3);
\draw (5,3)-- (5,2);
\draw (6,2)-- (6,3);
\draw (6,3)-- (6,4);
\draw (4,4) .. controls (4.5,4.5) and (5.5,4.5) .. (6,4);
\draw (4,3) .. controls (4.5,3.5) and (5.5,3.5) .. (6,3);
\draw (4,2) .. controls (4.5,2.5) and (5.5,2.5) .. (6,2);
\draw (4,4) .. controls (4.5,3.5) and (4.5,2.5) .. (4,2);
\draw (5,4) .. controls (5.5,3.5) and (5.5,2.5) .. (5,2);
\draw (6,4) .. controls (6.5,3.5) and (6.5,2.5) .. (6,2);
\begin{scriptsize}
\fill [color=red] (4,4) circle (1.5pt);
\fill [color=red] (5,4) circle (1.5pt);
\fill [color=red] (6,4) circle (1.5pt);
\fill [color=black] (4,3) circle (1.5pt);
\fill [color=black] (5,3) circle (1.5pt);
\fill [color=black] (6,3) circle (1.5pt);
\fill [color=black] (4,2) circle (1.5pt);
\fill [color=black] (5,2) circle (1.5pt);
\fill [color=black] (6,2) circle (1.5pt);
\end{scriptsize}
\end{tikzpicture}\label{grith3x3a}} 
\subfigure[Case $k=2$ (b)]{\begin{tikzpicture}[line cap=round,line join=round,>=triangle 45,x=1.0cm,y=1.0cm]
\clip(3.5,1.5) rectangle (6.5,4.5);
\draw (4,4)-- (5,4);
\draw (5,4)-- (6,4);
\draw (4,3)-- (5,3);
\draw (5,3)-- (6,3);
\draw (4,2)-- (5,2);
\draw (5,2)-- (6,2);
\draw (4,2)-- (4,3);
\draw (4,3)-- (4,4);
\draw (5,4)-- (5,3);
\draw (5,3)-- (5,2);
\draw (6,2)-- (6,3);
\draw (6,3)-- (6,4);
\draw (4,4) .. controls (4.5,4.5) and (5.5,4.5) .. (6,4);
\draw (4,3) .. controls (4.5,3.5) and (5.5,3.5) .. (6,3);
\draw (4,2) .. controls (4.5,2.5) and (5.5,2.5) .. (6,2);
\draw (4,4) .. controls (4.5,3.5) and (4.5,2.5) .. (4,2);
\draw (5,4) .. controls (5.5,3.5) and (5.5,2.5) .. (5,2);
\draw (6,4) .. controls (6.5,3.5) and (6.5,2.5) .. (6,2);
\begin{scriptsize}
\fill [color=red] (4,4) circle (1.5pt);
\fill [color=red] (5,4) circle (1.5pt);
\fill [color=black] (6,4) circle (1.5pt);
\fill [color=red] (4,3) circle (1.5pt);
\fill [color=black] (5,3) circle (1.5pt);
\fill [color=black] (6,3) circle (1.5pt);
\fill [color=black] (4,2) circle (1.5pt);
\fill [color=black] (5,2) circle (1.5pt);
\fill [color=black] (6,2) circle (1.5pt);
\end{scriptsize}
\end{tikzpicture}\label{grith3x3b}}
\subfigure[Case $k=2$ (c)]{\begin{tikzpicture}[line cap=round,line join=round,>=triangle 45,x=1.0cm,y=1.0cm]
\clip(3.5,1.5) rectangle (6.5,4.5);
\draw (4,4)-- (5,4);
\draw (5,4)-- (6,4);
\draw (4,3)-- (5,3);
\draw (5,3)-- (6,3);
\draw (4,2)-- (5,2);
\draw (5,2)-- (6,2);
\draw (4,2)-- (4,3);
\draw (4,3)-- (4,4);
\draw (5,4)-- (5,3);
\draw (5,3)-- (5,2);
\draw (6,2)-- (6,3);
\draw (6,3)-- (6,4);
\draw (4,4) .. controls (4.5,4.5) and (5.5,4.5) .. (6,4);
\draw (4,3) .. controls (4.5,3.5) and (5.5,3.5) .. (6,3);
\draw (4,2) .. controls (4.5,2.5) and (5.5,2.5) .. (6,2);
\draw (4,4) .. controls (4.5,3.5) and (4.5,2.5) .. (4,2);
\draw (5,4) .. controls (5.5,3.5) and (5.5,2.5) .. (5,2);
\draw (6,4) .. controls (6.5,3.5) and (6.5,2.5) .. (6,2);
\begin{scriptsize}
\fill [color=red] (4,4) circle (1.5pt);
\fill [color=red] (5,4) circle (1.5pt);
\fill [color=black] (6,4) circle (1.5pt);
\fill [color=black] (4,3) circle (1.5pt);
\fill [color=black] (5,3) circle (1.5pt);
\fill [color=red] (6,3) circle (1.5pt);
\fill [color=black] (4,2) circle (1.5pt);
\fill [color=black] (5,2) circle (1.5pt);
\fill [color=black] (6,2) circle (1.5pt);
\end{scriptsize}
\end{tikzpicture}\label{grith3x3c}}
\subfigure[Case $k=2$ (d)]{\begin{tikzpicture}[line cap=round,line join=round,>=triangle 45,x=1.0cm,y=1.0cm]
\clip(3.5,1.5) rectangle (6.5,4.5);
\draw (4,4)-- (5,4);
\draw (5,4)-- (6,4);
\draw (4,3)-- (5,3);
\draw (5,3)-- (6,3);
\draw (4,2)-- (5,2);
\draw (5,2)-- (6,2);
\draw (4,2)-- (4,3);
\draw (4,3)-- (4,4);
\draw (5,4)-- (5,3);
\draw (5,3)-- (5,2);
\draw (6,2)-- (6,3);
\draw (6,3)-- (6,4);
\draw (4,4) .. controls (4.5,4.5) and (5.5,4.5) .. (6,4);
\draw (4,3) .. controls (4.5,3.5) and (5.5,3.5) .. (6,3);
\draw (4,2) .. controls (4.5,2.5) and (5.5,2.5) .. (6,2);
\draw (4,4) .. controls (4.5,3.5) and (4.5,2.5) .. (4,2);
\draw (5,4) .. controls (5.5,3.5) and (5.5,2.5) .. (5,2);
\draw (6,4) .. controls (6.5,3.5) and (6.5,2.5) .. (6,2);
\begin{scriptsize}
\fill [color=red] (4,4) circle (1.5pt);
\fill [color=black] (5,4) circle (1.5pt);
\fill [color=black] (6,4) circle (1.5pt);
\fill [color=black] (4,3) circle (1.5pt);
\fill [color=red] (5,3) circle (1.5pt);
\fill [color=black] (6,3) circle (1.5pt);
\fill [color=black] (4,2) circle (1.5pt);
\fill [color=black] (5,2) circle (1.5pt);
\fill [color=red] (6,2) circle (1.5pt);
\end{scriptsize}
\end{tikzpicture}\label{grith3x3d}}
\subfigure[Cube]{\begin{tikzpicture}[line cap=round,line join=round,>=triangle 45,x=0.3cm,y=0.3cm]
\clip(4.5,0) rectangle (11.5,8.5);
\draw (5,2)-- (9,2);
\draw (5,2)-- (5,6);
\draw (5,6)-- (9,6);
\draw (9,6)-- (9,2);
\draw (5,6)-- (7,8);
\draw (7,8)-- (11,8);
\draw (11,8)-- (9,6);
\draw (11,8)-- (11,4);
\draw (9,2)-- (11,4);
\draw (5,2)-- (7,4);
\draw (7,4)-- (8.85,4);
\draw (9.15,4)-- (11,4);
\draw (7,8)-- (7,6.15);
\draw (7,4)-- (7,5.85);
\begin{scriptsize}
\fill [color=black] (5,2) circle (1.5pt);
\fill [color=black] (9,2) circle (1.5pt);
\fill [color=black] (5,6) circle (1.5pt);
\fill [color=black] (9,6) circle (1.5pt);
\fill [color=black] (7,8) circle (1.5pt);
\fill [color=black] (11,8) circle (1.5pt);
\fill [color=black] (11,4) circle (1.5pt);
\fill [color=black] (7,4) circle (1.5pt);
\end{scriptsize}
\end{tikzpicture}\label{cube}}
\subfigure[Cube (a)]{\begin{tikzpicture}[line cap=round,line join=round,>=triangle 45,x=0.3cm,y=0.3cm]
\clip(4.5,0) rectangle (11.5,8.5);
\draw (5,2)-- (9,2);
\draw (5,2)-- (5,6);
\draw (5,6)-- (9,6);
\draw (9,6)-- (9,2);
\draw (5,6)-- (7,8);
\draw (7,8)-- (11,8);
\draw (11,8)-- (9,6);
\draw (11,8)-- (11,4);
\draw (9,2)-- (11,4);
\draw (5,2)-- (7,4);
\draw (7,4)-- (8.85,4);
\draw (9.15,4)-- (11,4);
\draw (7,8)-- (7,6.15);
\draw (7,4)-- (7,5.85);
\begin{scriptsize}
\fill [color=black] (5,2) circle (1.5pt);
\fill [color=black] (9,2) circle (1.5pt);
\fill [color=red] (5,6) circle (1.5pt);
\fill [color=black] (9,6) circle (1.5pt);
\fill [color=red] (7,8) circle (1.5pt);
\fill [color=red] (11,8) circle (1.5pt);
\fill [color=black] (11,4) circle (1.5pt);
\fill [color=black] (7,4) circle (1.5pt);
\end{scriptsize}
\end{tikzpicture}\label{cubea}}
\subfigure[Cube (b)]{\begin{tikzpicture}[line cap=round,line join=round,>=triangle 45,x=0.3cm,y=0.3cm]
\clip(4.5,0) rectangle (11.5,8.5);
\draw (5,2)-- (9,2);
\draw (5,2)-- (5,6);
\draw (5,6)-- (9,6);
\draw (9,6)-- (9,2);
\draw (5,6)-- (7,8);
\draw (7,8)-- (11,8);
\draw (11,8)-- (9,6);
\draw (11,8)-- (11,4);
\draw (9,2)-- (11,4);
\draw (5,2)-- (7,4);
\draw (7,4)-- (8.85,4);
\draw (9.15,4)-- (11,4);
\draw (7,8)-- (7,6.15);
\draw (7,4)-- (7,5.85);
\begin{scriptsize}
\fill [color=red] (5,2) circle (1.5pt);
\fill [color=black] (9,2) circle (1.5pt);
\fill [color=black] (5,6) circle (1.5pt);
\fill [color=black] (9,6) circle (1.5pt);
\fill [color=red] (7,8) circle (1.5pt);
\fill [color=red] (11,8) circle (1.5pt);
\fill [color=black] (11,4) circle (1.5pt);
\fill [color=black] (7,4) circle (1.5pt);
\end{scriptsize}
\end{tikzpicture}\label{cubeb}}
\subfigure[Cube (c)]{\begin{tikzpicture}[line cap=round,line join=round,>=triangle 45,x=0.3cm,y=0.3cm]
\clip(4.5,0) rectangle (11.5,8.5);
\draw (5,2)-- (9,2);
\draw (5,2)-- (5,6);
\draw (5,6)-- (9,6);
\draw (9,6)-- (9,2);
\draw (5,6)-- (7,8);
\draw (7,8)-- (11,8);
\draw (11,8)-- (9,6);
\draw (11,8)-- (11,4);
\draw (9,2)-- (11,4);
\draw (5,2)-- (7,4);
\draw (7,4)-- (8.85,4);
\draw (9.15,4)-- (11,4);
\draw (7,8)-- (7,6.15);
\draw (7,4)-- (7,5.85);
\begin{scriptsize}
\fill [color=black] (5,2) circle (1.5pt);
\fill [color=red] (9,2) circle (1.5pt);
\fill [color=red] (5,6) circle (1.5pt);
\fill [color=black] (9,6) circle (1.5pt);
\fill [color=black] (7,8) circle (1.5pt);
\fill [color=red] (11,8) circle (1.5pt);
\fill [color=black] (11,4) circle (1.5pt);
\fill [color=black] (7,4) circle (1.5pt);
\end{scriptsize}
\end{tikzpicture}\label{cubec}}
\caption{Auxiliary graphs for Theorem \ref{theocmpltprd}}
\end{figure}

\begin{theorem}\label{theocmpltprd}
For $k\geq2$, if $n_i\geq2$ for all $1\leq i\leq k$, 
then $$BI_2(K(n_1,n_2,\dots,n_k))\simeq\bigvee_{F_k(n_1,\dots,n_k)}\mathbb{S}^{1}$$
$$\Delta_2^t(K(n_1,n_2,\dots,n_k))\simeq\bigvee_{F_k(n_1,\dots,n_k)}\mathbb{S}^{n-4}$$ 
where $n=n_1\cdots n_k$.
\end{theorem}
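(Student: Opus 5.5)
We first show $BI_2(K(n_1,\dots,n_k))\simeq\bigvee_{F_k}\mathbb{S}^1$, then pass to $\Delta_2^t$ by Alexander duality, exactly as in Theorem \ref{teolattice}.

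\textbf{Clique complex.} In $G=K_{n_1}\oblong\cdots\oblong K_{n_k}$, two vertices are adjacent iff they differ in exactly one coordinate. Hence every clique lies in a single line, i.e.\ a set where all coordinates but one are fixed, and every line is a clique. So $BI_2(G)$ is the union of the simplices $\Delta^{\ell}$ over all lines $\ell$. Two distinct lines meet in at most one vertex, and three or more pairwise-meeting lines share no simplex of dimension $\geq1$.

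Collapse each simplex $\Delta^\ell$ to a cone on its vertex set, attached along the vertices. This is a homotopy equivalence because each $\Delta^\ell$ is contractible and the simplices meet only in vertices. We obtain the bipartite incidence graph $B$ between vertices and lines. Then $BI_2(G)\simeq B\simeq\bigvee_{c}\mathbb{S}^1$ with $c=|E(B)|-|V(B)|+1$, since $B$ is connected (as $G$ is).

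Now count. The incidence graph has $kn$ edges, since each vertex lies in exactly one line per direction. It has $n+\sum_i\prod_{j\neq i}n_j$ vertices. Therefore $c=(k-1)n+1-\sum_i\prod_{j\neq i}n_j=F_k(n_1,\dots,n_k)$.

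Alternatively, one could induct on $n_k$ via the recursion for $F_k$ stated before the theorem, but the direct count is simpler.

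\textbf{Total cut complex.} By Theorem \ref{dualidadalexander}, $\tilde H_q(\Delta_2^t(G))\cong\tilde H^{n-q-3}(BI_2(G))$. This is free of rank $F_k$ for $q=n-4$ and $0$ otherwise, since the cohomology of a wedge of circles is free. By Theorem \ref{homttyphomlgrp} it remains to show $\Delta_2^t(G)$ is simply connected. We do this by proving $\mathrm{sk}_2\Delta_2^t(G)=\mathrm{sk}_2\Delta^{V(G)}$: for any three vertices $x,y,z$, the graph $G-\{x,y,z\}$ must contain a non-adjacent pair, i.e.\ two vertices differing in at least two coordinates.

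\textbf{Main obstacle: small cases.} This last step needs care in small cases, which is what Figure \ref{grith3x3} and the cube figures address. Fix two coordinates and choose values $a\neq a'$ and $b\neq b'$ in them. Then $(a,b,\ast)$ and $(a',b',\ast)$ form a non-adjacent pair, as do $(a,b',\ast)$ and $(a',b,\ast)$, with the remaining coordinates fixed.

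If $k\geq3$ or some $n_i\geq3$, there are at least three vertex-disjoint such non-adjacent pairs, so $\{x,y,z\}$ cannot meet all of them. For $k\geq3$, a $2\times2\times2$ subcube gives four disjoint pairs. For $k=2$ with $n_1\geq3$, a $3\times2$ or $3\times3$ grid of values gives at least three disjoint non-adjacent pairs.

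The only remaining case is $k=2$, $n_1=n_2=2$. There $G=C_4$, so $\Delta_2^t(C_4)\simeq\mathbb{S}^0$ by Theorem \ref{totcomcycles}, which agrees with $F_2(2,2)=1$ and $n-4=0$. The claim therefore holds in every case.
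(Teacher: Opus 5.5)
Your computation of $BI_2$ is correct and takes a different route from the paper. The paper inducts on $k$ and on $n_1+\cdots+n_k$: it splits $BI_2$ into two subcomplexes meeting in a discrete set, applies Lemma \ref{homocolimpegado} with the recursion for $F_k$, and cites Goyal--Shukla--Singh for $k=2$. You instead note that the facets are the lines, which pairwise meet in at most one vertex. Hence $BI_2$ is homotopy equivalent to the vertex--line incidence graph, and $F_k$ is its cyclomatic number. This is shorter and self-contained.

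The gap is in the simple-connectivity step for $k=2$, and it has two parts.

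\emph{The counting is insufficient.} Three vertex-disjoint non-adjacent pairs are not enough, because $\{x,y,z\}$ can contain one vertex of each pair. You need four.

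\emph{The claim itself fails for $K(3,2)$.} For $G=K(3,2)$, the equality $\mathrm{sk}_2\Delta_2^t(G)=\mathrm{sk}_2\Delta^{V(G)}$ is false. Removing the line $\{(1,1),(2,1),(3,1)\}$ leaves the line $\{(1,2),(2,2),(3,2)\}$, which is a triangle, so $\alpha=1$ and this triple is not a face.

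Your argument does go through with four disjoint pairs in these cases:
\begin{itemize}
\item when $n_1,n_2\geq3$, use a $3\times3$ grid with the pairs $\{(1,1),(2,2)\}$, $\{(2,1),(1,2)\}$, $\{(3,1),(1,3)\}$, $\{(3,2),(2,3)\}$;
\item when $n_1\geq4$ and $n_2=2$, use a $4\times2$ grid.
\end{itemize}
$K(3,2)$ needs a separate argument. This is why the paper handles $K(n_1,2)$ by citing \citep[Theorem 4.8]{totcutcompl} and uses the $K(3,3)$ configuration only for $n_1,n_2\geq3$.

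A direct fix for $K(3,2)$ is as follows. In $\Delta_2^t(K(3,2))$ all pairs are faces, and the only missing triples are the two $3$-element lines. The boundary of each missing triangle bounds the cone from any vertex of the other line, and every triangle of that cone is a face. Attaching the two missing $2$-cells therefore does not change $\pi_1$. Hence $\pi_1(\Delta_2^t(K(3,2)))\cong\pi_1(\mathrm{sk}_2\Delta^{V(G)})=0$, and the rest of your argument goes through.
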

\begin{proof}
We start with the clique complex.
For the case $k=2$, the clique complex of $K(n_1,n_2)$ is the independence complex of categorical product $K_{n_1}\times K_{n_2}$ and 
the result for this case is Proposition 3.4 of \citep{homotopygoyal}.
Suppose the result is true for any $k-1$. For $k$ we use induction on $n_1+\cdots+n_k$. If $n_1=n_2=\cdots=n_k=1$, the result is clear. 
Notice that $K(n_1,n_2,\dots,n_{k-1},1)\cong K(n_1,n_2,\dots,n_{k-1})$, thus we can assume $n_i\geq2$ for all $i$.
If $n_1=n_2=\cdots=n_k=2$, then $BI_2(K(2,2,\dots,2))$ is the $1$-skeleton of the $k$ hypercube and has the homotopy type of 
$k2^{k-1}-2^k+1$ circles. Suppose the result is true for $n_1+\cdots+n_k-1$. 
The clique complex $BI_2(K(n_1,n_2,\dots,n_k))$ have as facets the simplices
$$\sigma_j(i_1,\dots,i_k)=\prod_{r=1}^kX_{i_r}$$ 
where $i_j=*$, $i_r\in\underline{n_{r}}$ for $r\neq j$, $X_j=\underline{n_j}$ and $X_r=\{i_r\}$. We take the subcomplex 
$J_1$ with facets all the facets $\sigma_j(i_1,\dots,i_k)$ such that one of the following conditions is achieve:
\begin{itemize}
    \item[(a)] $j=k$ and no restriction on the other parameters.
    \item[(b)] $j<k$ with $i_k<n_k$ and no restriction on the other parameters.
\end{itemize}
Then $J_1\simeq BI_2(K(n_1,n_2,\dots,n_k-1))$. We also take the subcomplex $J_2$ with facets all the facets $\sigma_j(i_1,\dots,i_k)$ such that 
one of the following conditions is achieved:
\begin{itemize}
    \item[(a)] $j=k$ and no restriction on the other parameters.
    \item[(c)] $j<k$, $i_k=n_k$ and no restriction on the other parameters.
\end{itemize}
Then $J_2\simeq BI_2(K(n_1,n_2,\dots,1))$. Notice that the intersection $J_1\cap J_2$ is the complex with facets of type (a), thus is homotopy equivalent to the disjoin union of 
$n_1\cdots n_{k-1}$ points. Now, we have that
$$BI_2(K(n_1,n_2,\dots,n_k))=J_1\cup J_2\;\;\mbox{ and }\;\;J_1\cap J_2\simeq\bigvee_{n_1\cdots n_{k-1}-1}\mathbb{S}^0$$
Therefore, by Lemma \ref{homocolimpegado}, we have that
$$BI_2(K(n_1,n_2,\dots,n_k))\simeq BI_2(K(n_1,n_2,\dots,n_k-1))\vee BI_2(K(n_1,n_2,\dots,n_{k-1},1))\vee\bigvee_{n_1\cdots n_{k-1}-1}\mathbb{S}^1$$
By inductive hypothesis the first two spaces of the wedge have the homotopy type expected and using the properties of the functions $F_k$'s we obtain the result.

Now we calculate the homotopy type for the total cut complex. 
For $k=2$, $n_1\geq2$ and $n_2=2$ the result is true \citep[see Theorem 4.8]{totcutcompl}. For the remaining cases we will show that $\Delta_2^t(K(n_1,n_2,\dots,n_k))$ is simply connected. 
For $k=2$ we can assume that $n_1,n_2\geq3$ and we take $x,y,z$ distinct vertices. There 
is a subgraph isomorphic to $K(3,3)$ (see Figure \ref{grith3x3}) that contains $\{x,y,z\}$. Up to isomorphism there are $4$ possible cases shown in Figures 
\ref{grith3x3a},\ref{grith3x3b},\ref{grith3x3c},\ref{grith3x3d} where the vertices of $\{x,y,z\}$ are the red ones. Regardless of the case, we have that there is a independent set of cardinality 
$2$ without the vertices $x,y,z$. Therefore $\mathrm{sk}_2\Delta_2^t(K(n_1,n_2))=\mathrm{sk}_2\Delta^{\underline{n_1}\times\underline{n_2}}$. For $k\geq3$, we take the subgraph $Q=K(2,2,2,1,\dots,1)$ which 
is isomorphic to a cube (see Figure \ref{cube}). We take $x,y,z$ distinct vertices. If $|\{x,y,z\}\cap V(Q)|\leq2$, it is clear that $\{x,y,z\}$ is a simplex. Assume 
$|\{x,y,z\}\cap V(Q)|=3$, then there are up to isomorphism $3$ cases shown in Figures \ref{cubea},\ref{cubeb},\ref{cubec} where the vertices $x,y,z$ are the red ones. Regardless of the case, 
we have that there is a independent set of cardinality $2$ without the vertices $x,y,z$. Therefore $\mathrm{sk}_2\Delta_2^t(K(n_1,n_2,\dots,n_k))=\mathrm{sk}_2\Delta^{\underline{n_1}\times\dots\times\underline{n_k}}$.
By Theorems \ref{dualidadalexander} and \ref{homttyphomlgrp}, $\Delta_2^t(K(n_1,n_2,\dots,n_k))$ has the expected homotopy type.
\end{proof}

We finish with the following question.

\begin{que}
Which are the homotopy types of $BI_d(K(n_1,\dots,n_l))$ and $\Delta_d^t(K(n_1,\dots,n_l))$ for $d\geq3$?
\end{que}

\bibliographystyle{acm}
\bibliography{biblio}

@InBook{bjornertopmeth,
  chapter   = {Topological methods},
  pages     = {1819-1872},
  title     = {Handbook of Combinatorics},
  publisher = {North-Holland},
  year      = {1995},
  author    = {Anders Björner},
  editor    = {R. Graham and M. Grötschel and L. Lov\'asz},
}

@Book{cubicalhomotopy,
  AUTHOR = {Munson, Brian A. and Voli\'{c}, Ismar},
     TITLE = {Cubical homotopy theory},
    SERIES = {New Mathematical Monographs},
    VOLUME = {25},
 PUBLISHER = {Cambridge University Press, Cambridge},
      YEAR = {2015},
     PAGES = {xv+631},
      ISBN = {978-1-107-03025-1},
   MRCLASS = {55P99 (16B50 18G30 55U10 57Q45 57R19)},
  MRNUMBER = {3559153},
MRREVIEWER = {Rui\ Miguel\ Saramago},
       DOI = {10.1017/CBO9781139343329},
       URL = {https://doi.org/10.1017/CBO9781139343329},
}

@Book{graphsanddigraphs,
  AUTHOR = {Chartrand, Gary and Lesniak, Linda and Zhang, Ping},
     TITLE = {Graphs \& digraphs},
    SERIES = {Textbooks in Mathematics},
   EDITION = {Sixth},
 PUBLISHER = {CRC Press, Boca Raton, FL},
      YEAR = {2016},
     PAGES = {xii+628},
      ISBN = {978-1-4987-3576-6},
   MRCLASS = {05-01 (05Cxx 05D10 05D40)},
  MRNUMBER = {3445306},
}

@article {clicomgrppowers,
    AUTHOR = {Adamaszek, Micha\l },
     TITLE = {Clique complexes and graph powers},
   JOURNAL = {Israel J. Math.},
  FJOURNAL = {Israel Journal of Mathematics},
    VOLUME = {196},
      YEAR = {2013},
    NUMBER = {1},
     PAGES = {295--319},
      ISSN = {0021-2172,1565-8511},
   MRCLASS = {05C69 (05C76 05E45 55U10)},
  MRNUMBER = {3096593},
MRREVIEWER = {Andrej\ Taranenko},
       DOI = {10.1007/s11856-012-0166-1},
       URL = {https://doi.org/10.1007/s11856-012-0166-1},
}

@article {kimboundind,
    AUTHOR = {Kim, Minki and Lew, Alan},
     TITLE = {Complexes of graphs with bounded independence number},
   JOURNAL = {Israel J. Math.},
  FJOURNAL = {Israel Journal of Mathematics},
    VOLUME = {249},
      YEAR = {2022},
    NUMBER = {1},
     PAGES = {83--120},
      ISSN = {0021-2172,1565-8511},
   MRCLASS = {05C69},
  MRNUMBER = {4462630},
MRREVIEWER = {G\'{a}bor\ N.\ S\'{a}rk\"{o}zy},
       DOI = {10.1007/s11856-022-2308-4},
       URL = {https://doi.org/10.1007/s11856-022-2308-4},
}

@article {totcutcompl,
    AUTHOR = {Bayer, Margaret and Denker, Mark and Milutinovi\'c, Marija
              Jeli\'c{} and Rowlands, Rowan and Sundaram, Sheila and Xue,
              Lei},
     TITLE = {Total {C}ut {C}omplexes of {G}raphs},
   JOURNAL = {Discrete Comput. Geom.},
  FJOURNAL = {Discrete \& Computational Geometry. An International Journal
              of Mathematics and Computer Science},
    VOLUME = {73},
      YEAR = {2025},
    NUMBER = {2},
     PAGES = {500--527},
      ISSN = {0179-5376,1432-0444},
   MRCLASS = {57M15 (05C69 05E45 57Q70)},
  MRNUMBER = {4865931},
       DOI = {10.1007/s00454-024-00630-4},
       URL = {https://doi.org/10.1007/s00454-024-00630-4},
}

@Book{hatcher,
  AUTHOR = {Hatcher, Allen},
     TITLE = {Algebraic topology},
 PUBLISHER = {Cambridge University Press, Cambridge},
      YEAR = {2002},
     PAGES = {xii+544},
      ISBN = {0-521-79160-X; 0-521-79540-0},
   MRCLASS = {55-01 (55-00)},
  MRNUMBER = {1867354},
MRREVIEWER = {Donald\ W.\ Kahn},
}

@Article{bjorneralexander,
 AUTHOR = {Bj\"orner, Anders and Tancer, Martin},
     TITLE = {Note: {C}ombinatorial {A}lexander duality---a short and
              elementary proof},
   JOURNAL = {Discrete Comput. Geom.},
  FJOURNAL = {Discrete \& Computational Geometry. An International Journal
              of Mathematics and Computer Science},
    VOLUME = {42},
      YEAR = {2009},
    NUMBER = {4},
     PAGES = {586--593},
      ISSN = {0179-5376,1432-0444},
   MRCLASS = {55U30 (05E45)},
  MRNUMBER = {2556456},
       DOI = {10.1007/s00454-008-9102-x},
       URL = {https://doi.org/10.1007/s00454-008-9102-x},
}

@misc{froberg,
 author = {Fr{\"o}berg, Ralf},
 title = {On {Stanley}-{Reisner} rings},
 year = {1990},
 language = {English},
 howpublished = {Topics in algebra. {Part} 2: {Commutative} rings and algebraic groups, {Pap}. 31st {Semester} {Class}. {Algebraic} {Struct}., {Warsaw}/{Poland} 1988, {Banach} {Cent}. {Publ}. 26, {Part} 2, 57-70 (1990).},
 zbMATH = {16165},
 Zbl = {0741.13006}
}

@article{cutcomplexesi,
 author = {Bayer, Margaret and Denker, Mark and Milutinovi{\'c}, Marija Jeli{\'c} and Rowlands, Rowan and Sundaram, Sheila and Xue, Lei},
 title = {Topology of cut complexes of graphs},
 fjournal = {SIAM Journal on Discrete Mathematics},
 journal = {SIAM J. Discrete Math.},
 issn = {0895-4801},
 volume = {38},
 number = {2},
 pages = {1630--1675},
 year = {2024},
 language = {English},
 doi = {10.1137/23M1569034},
 zbMATH = {7862216},
 Zbl = {1550.57012}
}

@article {homtphmlgrps,
    AUTHOR = {Carnero$\;$Bravo, Andr\'es and Antol\'in$\;$Camarena, Omar},
     TITLE = {Homotopy type through homology groups},
   JOURNAL = {Bol. Soc. Mat. Mex. (3)},
  FJOURNAL = {Bolet\'in de la Sociedad Matem\'atica Mexicana. Third Series},
    VOLUME = {30},
      YEAR = {2024},
    NUMBER = {2},
     PAGES = {Paper No. 28, 6},
      ISSN = {1405-213X,2296-4495},
   MRCLASS = {55P15 (55N10 55P10)},
  MRNUMBER = {4715353},
       DOI = {10.1007/s40590-024-00605-8},
       URL = {https://doi.org/10.1007/s40590-024-00605-8},
}

@book{combcompwords,
 author = {Heubach, Silvia and Mansour, Toufik},
 title = {Combinatorics of compositions and words.},
 fseries = {Discrete Mathematics and its Applications},
 series = {Discrete Math. Appl. (Boca Raton)},
 isbn = {978-1-4200-7267-9; 978-1-138-11667-2; 978-1-4200-7268-6},
 year = {2009},
 publisher = {Boca Raton, FL: CRC Press},
 language = {English},
 doi = {10.1201/9781420072686},
 zbMATH = {5590728},
 Zbl = {1184.68373}
}

@article{repfntgrpstintRl,
 author = {Lekkerkerker, C. G. and Boland, J. Ch.},
 title = {Representation of a finite graph by a set of intervals on the real line},
 fjournal = {Fundamenta Mathematicae},
 journal = {Fundam. Math.},
 issn = {0016-2736},
 volume = {51},
 pages = {45--64},
 year = {1962},
 language = {English},
 doi = {10.4064/fm-51-1-45-64},
 url = {https://eudml.org/doc/213681},
 zbMATH = {3171679},
 Zbl = {0105.17501}
}

@Article{chauhantot2comcycl,
  author = {Chauhan, Pratiksha and Shukla, Samir and Vinayak, Kumar},
  title = {Total 2-cut complexes of powers of cycle graphs and Cartesian products of certain graphs},
  journal     ={https://arxiv.org/abs/2512.04486},
  year        = {2025},
  eprint      = {2512.04486},
  eprintclass = {math.AT},
  eprinttype  = {arXiv},
  url        = {https://arxiv.org/abs/2512.04486},
}

@Article{chandrakartoptotcutgrid,
 author = {Chandrakar, Himanshu and Hazra, Nisith Ranjan and Rout, Debotosh and Singh, Anurag},
 title = {Topology of total cut complexes and cut complexes of grid graphs},
 fjournal = {SIAM Journal on Discrete Mathematics},
 journal = {SIAM J. Discrete Math.},
 issn = {0895-4801},
 volume = {40},
 number = {2},
 pages = {760--788},
 year = {2026},
 language = {English},
 doi = {10.1137/24M1687716},
 zbMATH = {8207264}
}

@Article{shenhmtpcyclepwr,
  author = {Shen, Yufeng and Song, Zhiyu and Yu, Fenglin and Zhou, Leopold Wuhan and Zhuang, Jingqi},
  title = {Homotopy {Type} of {Total} {Cut} {Complexes} of {Squared} {Cycle} {Graphs}},
  journal     ={https://arxiv.org/abs/2510.22574},
  year        = {2025},
  eprint      = {2510.22574},
  eprintclass = {math.AT},
  eprinttype  = {arXiv},
  url        = {https://arxiv.org/abs/2510.22574},
}

@Article{filak,
  author = {Filakovsk{\'y}, Marek},
  title = {The {Topology} of $k$-{Robust} {Clique} {Complexes} in {Grid}-like {Graphs}},
  year = {2026},
  journal     ={https://arxiv.org/abs/2602.11365},
  eprint      = {2602.11365},
  eprintclass = {math.AT},
  eprinttype  = {arXiv},
  url        = {https://arxiv.org/abs/2602.11365},
}

@Article{shella,
  author = {Ghosh, Rakesh and Selvaraja, S},
  title = {Shellability in {Clique}-{Free} {Complexes} of {Graphs}},
  year = {2026},
  journal     ={https://arxiv.org/abs/2602.09623},
  eprint      = {2602.09623},
  eprintclass = {math.AT},
  eprinttype  = {arXiv},
  url        = {https://arxiv.org/abs/2602.09623},
}

@article{zieglerhocolim,
AUTHOR = {Welker, Volkmar and Ziegler, G\"{u}nter M. and
              \v{Z}ivaljevi\'{c}, Rade T.},
     TITLE = {Homotopy colimits---comparison lemmas for combinatorial
              applications},
   JOURNAL = {J. Reine Angew. Math.},
  FJOURNAL = {Journal f\"{u}r die Reine und Angewandte Mathematik. [Crelle's
              Journal]},
    VOLUME = {509},
      YEAR = {1999},
     PAGES = {117--149},
      ISSN = {0075-4102,1435-5345},
   MRCLASS = {55P99 (05B30)},
  MRNUMBER = {1679169},
MRREVIEWER = {R.\ M.\ Vogt},
       DOI = {10.1515/crll.1999.035},
       URL = {https://doi.org/10.1515/crll.1999.035},
}

@article{dugger2008primer,
  title={A primer on homotopy colimits},
  author={Dugger, Daniel},
  journal={preprint},
  year={2008}
}

@Article{homotopygoyal,
  AUTHOR = {Goyal, Shuchita and Shukla, Samir and Singh, Anurag},
     TITLE = {Homotopy type of independence complexes of certain families of
              graphs},
   JOURNAL = {Contrib. Discrete Math.},
  FJOURNAL = {Contributions to Discrete Mathematics},
    VOLUME = {16},
      YEAR = {2021},
    NUMBER = {3},
     PAGES = {74--92},
      ISSN = {1715-0868},
   MRCLASS = {05C69 (55P15)},
  MRNUMBER = {4369845},
MRREVIEWER = {Sel\c{c}uk\ Kayacan},
DOI = {10.11575/cdm.v16i3.71284},
}

@article{mcordsnghmgrpshmty,
 author = {McCord, M. C.},
 title = {Singular homology groups and homotopy groups of finite topological spaces},
 fjournal = {Duke Mathematical Journal},
 journal = {Duke Math. J.},
 issn = {0012-7094},
 volume = {33},
 pages = {465--474},
 year = {1966},
 language = {English},
 doi = {10.1215/S0012-7094-66-03352-7},
 zbMATH = {3229383},
 Zbl = {0142.21503}
}

@article{dochtermanengstrom,
 author = {Dochtermann, Anton and Engstr{\"o}m, Alexander},
 title = {Algebraic properties of edge ideals via combinatorial topology},
 fjournal = {The Electronic Journal of Combinatorics},
 journal = {Electron. J. Comb.},
 issn = {1077-8926},
 volume = {16},
 number = {2},
 pages = {research paper r2, 24},
 year = {2009},
 language = {English},
 url = {https://eudml.org/doc/129889},
 zbMATH = {5541104},
 Zbl = {1161.13013}
}

@article{prisner,
 author = {Prisner, Erich},
 title = {Convergence of iterated clique graphs},
 fjournal = {Discrete Mathematics},
 journal = {Discrete Math.},
 issn = {0012-365X},
 volume = {103},
 number = {2},
 pages = {199--207},
 year = {1992},
 language = {English},
 doi = {10.1016/0012-365X(92)90270-P},
 zbMATH = {89926},
 Zbl = {0766.05096}
}

@article{friedmansimsets,
 author = {Friedman, Greg},
 title = {Survey article: an elementary illustrated introduction to simplicial sets},
 fjournal = {Rocky Mountain Journal of Mathematics},
 journal = {Rocky Mt. J. Math.},
 issn = {0035-7596},
 volume = {42},
 number = {2},
 pages = {353--423},
 year = {2012},
 language = {English},
 doi = {10.1216/RMJ-2012-42-2-353},
 zbMATH = {6035442},
 Zbl = {1248.55001}
}

@article{minianalxdual,
 author = {Minian, Elías Gabriel and Rodríguez, Jorge Tomás},
 title = {A note on the homotopy type of the {Alexander} dual},
 fjournal = {Discrete \& Computational Geometry},
 journal = {Discrete Comput. Geom.},
 issn = {0179-5376},
 volume = {52},
 number = {1},
 pages = {34--43},
 year = {2014},
 language = {English},
 doi = {10.1007/s00454-014-9606-5},
 zbMATH = {6346216},
 Zbl = {1301.55013}
}

@article{zaare2026vertex,
  title={Vertex Decomposability of Some Complexes Associated to Graphs},
  author={Zaare-Nahandi, Rahim},
  journal={Bulletin of the Iranian Mathematical Society},
  volume={52},
  number={4},
  pages={40},
  year={2026},
  publisher={Springer}
}

@article{froberg24,
 author = {Fr{\"o}berg, Ralf},
 title = {On {Stanley}-{Reisner} rings with linear resolutions},
 fjournal = {Bulletin of the Iranian Mathematical Society},
 journal = {Bull. Iran. Math. Soc.},
 issn = {1017-060X},
 volume = {50},
 number = {6},
 pages = {13},
 note = {Id/No 86},
 year = {2024},
 language = {English},
 doi = {10.1007/s41980-024-00947-z},
 zbMATH = {8027112},
 Zbl = {1573.13045}
}

@article{eagonreiner,
 author = {Eagon, John A. and Reiner, Victor},
 title = {Resolutions of {Stanley}-{Reisner} rings and {Alexander} duality},
 fjournal = {Journal of Pure and Applied Algebra},
 journal = {J. Pure Appl. Algebra},
 issn = {0022-4049},
 volume = {130},
 number = {3},
 pages = {265--275},
 year = {1998},
 language = {English},
 doi = {10.1016/S0022-4049(97)00097-2},
 zbMATH = {1308214},
 Zbl = {0941.13016}
}

@article{lovasz,
 author = {Lov{\'a}sz, L{\'a}szl{\'o}},
 title = {Kneser's conjecture, chromatic number, and homotopy},
 fjournal = {Journal of Combinatorial Theory. Series A},
 journal = {J. Comb. Theory, Ser. A},
 issn = {0097-3165},
 volume = {25},
 pages = {319--324},
 year = {1978},
 language = {English},
 doi = {10.1016/0097-3165(78)90022-5},
 zbMATH = {3650589},
 Zbl = {0418.05028}
}

@incollection{haxelltopcinind,
 author = {Haxell, Penny},
 title = {Topological connectedness and independent sets in graphs},
 booktitle = {Surveys in combinatorics 2019. Papers based on the 27th British combinatorial conference, University of Birmingham, Birmingham, UK, July 29 -- August 2, 2019},
 isbn = {978-1-108-74072-2; 978-1-108-64909-4},
 pages = {89--113},
 year = {2019},
 publisher = {Cambridge: Cambridge University Press},
 language = {English},
 doi = {10.1017/9781108649094.004},
 zbMATH = {7307103},
 Zbl = {1476.05156}
}

@book{kozlovcombalgtop,
 author = {Kozlov, Dimitry},
 title = {Combinatorial algebraic topology},
 fseries = {Algorithms and Computation in Mathematics},
 series = {Algorithms Comput. Math.},
 issn = {1431-1550},
 volume = {21},
 isbn = {978-3-540-71961-8; 978-3-540-73051-4},
 year = {2008},
 publisher = {Berlin: Springer},
 language = {English},
 zbMATH = {5175082},
 Zbl = {1130.55001}
}

@book{jonsson,
 author = {Jonsson, Jakob},
 title = {Simplicial complexes of graphs},
 fseries = {Lecture Notes in Mathematics},
 series = {Lect. Notes Math.},
 issn = {0075-8434},
 volume = {1928},
 isbn = {978-3-540-75858-7},
 year = {2008},
 publisher = {Berlin: Springer},
 language = {English},
 doi = {10.1007/978-3-540-75859-4},
 zbMATH = {5209738},
 Zbl = {1152.05001}
}

@book{delongueville,
 author = {de Longueville, Mark},
 title = {A course in topological combinatorics},
 fseries = {Universitext},
 series = {Universitext},
 issn = {0172-5939},
 isbn = {978-1-4419-7909-4; 978-1-4419-7910-0},
 year = {2013},
 publisher = {New York, NY: Springer},
 language = {English},
 doi = {10.1007/978-1-4419-7910-0},
 zbMATH = {5877216},
 Zbl = {1273.05001}
}
\end{document}